\newtheorem{theorem}{Theorem}[section]
\newtheorem{lemma}[theorem]{Lemma}
\newtheorem{proposition}[theorem]{Proposition}
\theoremstyle{definition}
\theoremstyle{remark}
\newtheorem{remark}[theorem]{Remark}
\numberwithin{equation}{section}
\newcommand{\R}{{\mathbb R}}
\newcommand{\N}{{\mathbb N}}
\newcommand{\Z}{{\mathbb Z}}
\newcommand{\ep}{\varepsilon}
\newcommand{\acknowledgments}{Acknowledgment}
\begin{document}

\title[Validity of formal expansions]
{Validity of formal expansions for singularly perturbed competition-diffusion systems}

\author{Ryunosuke Mori}
\address{Graduate School of Mathematical Sciences, 
University of Tokyo, Tokyo 153-8914, Japan}
\email{moriryu@ms.u-tokyo.ac.jp}


\subjclass[2010]{Primary 35K57, 35B25; Secondary 35K40, 35B53}

\date{}


\keywords{Singular perturbation, asymptotic expansion, front profile, 
competition-diffusion, bistable, Liouville type theorem}

\begin{abstract}
We consider a two-species competition-diffusion system involving 
a small parameter $\ep>0$ and discuss the validity of formal asymptotic expansions of 
solutions near the sharp interface limit $\ep\approx0$. We assume that the 
corresponding ODE system has two stable equilibria. As in the scalar Allen--Cahn equation, 
it is known that the motion of the sharp interfaces of such systems is 
governed by the mean curvature flow with a driving force.  
The formal expansion also suggests that the profile of the transition layers 
converges to that of a traveling wave solution as $\ep\rightarrow0$. 
In this paper, we rigorously verify this latter ansatz 
for a large class of initial data. 

The proof relies on a rescaling argument, the super--subsolution method 
and a Liouville type theorem for eternal solutions of parabolic systems. 
Roughly speaking, 
the Liouville type theorem states that any eternal solution that lies between two traveling waves 
is itself a traveling wave. The same Liouville type theorem was established for the scalar 
Allen--Cahn equation by Berestycki and Hamel. 
In view of their importance, we prove the Liouville type theorems 
in a rather general framework, not only for 
two-species competition-diffusion systems but also for $m$-species cooperation-diffusion 
systems possibly with time periodic or spatially periodic coefficients. 
\end{abstract}

\maketitle


\section{Introduction} 
We consider the following Lotka--Volterra competition-diffusion system:
\begin{equation}\label{eq:crds}
\left\{
\begin{array}{ll}\vspace{6pt}
\ep u_{t}=\ep D_1\nabla\cdot(k(x)\nabla u) 
+\frac{h(x)}{\ep}(R_1-a_1u-b_1 v)u,&x\in\Omega,\ t>0,\\
\vspace{6pt}
\ep v_{t}=\ep D_2 \nabla\cdot(k(x)\nabla v) 
+ \frac{h(x)}{\ep}(R_2-a_2 u-b_2v)v,&x\in\Omega,\ t>0,\\
\vspace{6pt}
\partial u/\partial\nu=\partial v/\partial \nu=0,&x\in\partial\Omega,\ t>0,\\
u(x,0)=u_0(x),\ v(x,0)=v_0(x),&x\in\Omega,
\end{array}
\right.
\end{equation}
where 
$\ep$ is a positive parameter, 
$\Omega$ is a bounded domain in $\R^N$, 
$\partial /\partial\nu$ is the outward normal derivative on $\partial\Omega$,  
$R_i, a_i, b_i$, $D_i$ $(i=1,2)$ are positive constants and 
$k(x)$, $h(x)$ are positive smooth functions. Our focus is on the behavior of solutions when $\ep$ 
is very small. 

In the case of scalar Allen--Cahn equation, its singular limit has been studied 
by many researchers. 
It is known that, when $\ep$ is very small, solutions starting from  rather general 
initial data develop steep transition layers --- or interface --- within a very short time 
(generation of interface), and that the motion of these transition layers is well 
approximated by the spatially heterogeneous mean curvature flow (motion of interface). 
There is extensive literature on this subject, particularly on the motion of interface. 
We do not give a large list of references here. On the other hand, there are much fewer 
rigorous studies that cover both the generation and the motion of interface; see for example, 
\cite{AHM, AM, C, MS1}. 
In many of those studies, 
formal asymptotic expansions near the transition layers are used 
to make a rough approximation of the actual behavior of solutions and are also used to 
construct super- and subsolutions to establish the limit motion law of the sharp interface rigorously. 
X. Chen \cite{C} shows that the Hausdorff distance between 
the layer of the actual solution and the limit interface is of order $O(\ep|\log\ep|)$ 
for rather general initial data. Alfaro, Matano and Hilhorst \cite{AHM} improve this 
interface error estimate to $O(\ep)$. 

As regards the profile of interface, 
Bellettini and Paolini \cite{BP} and de Mottoni and Schatzman \cite{MS2} 
show that the real solution is well approximated 
by the formal expansion within an error margin of $O(\ep^2|\log|^2)$ and $O(\ep^2)$, 
at least on a finite time interval, provided that the initial data is already sufficiently close to 
the formal expansion. 
However, whether the actual solutions that start from arbitrary initial data really 
possess a profile predicted by the formal expansion or not remained long open. 
In Alfaro and Matano \cite{AM}, 
this question was answered affirmatively for a large class of initial data 
by showing rigorously that the solution converges uniformly to 
the principal term of the formal expansion as $\ep\rightarrow0$. 

In the case of the two-species competition-diffusion system of the form \eqref{eq:crds}, 
its singular limit has been studied by Hilhorst et al. \cite{HKMN}. 
They prove that the width of the transition layer is of order $O(\ep)$ and that 
the interface converges  as $\ep\rightarrow0$ 
to a time-dependent hypersurface whose motion 
is governed by the mean curvature flow with a driving force. 
However, to what extent the formal expansion represents the actual profile of the solution 
was not studied. 
Our goal is to prove the validity of this 
formal expansion; namely, we show that the solution profile of \eqref{eq:crds} 
near the interface converges uniformly to the principal term of the formal expansion for 
a rather general class of initial data. 

Throughout this paper, we assume 
\begin{equation}\label{cd:sc}
\frac{a_1}{a_2}<\frac{R_1}{R_2}<\frac{b_1}{b_2}.
\end{equation}
under this assumption, the corresponding ODE system 
\begin{equation}\label{eq:cods}
\left\{
\begin{array}{ll}\vspace{6pt}
\dot{u}=f(u,v),&t\in\R,\\
\vspace{6pt}
\dot{v}=g(u,v),&t\in\R,\\
u(0;u_0,v_0)=u_0,&v(0;u_0,v_0)=v_0
\end{array}
\right.
\end{equation}
has precisely four equilibria: two stable nodes
\[ 
p^+:=(R_1,0),\ p^-:=(0,R_2), 
\]
a saddle point 
\[
(u^*,v^*):=\Big(\frac{b_2 R_1-b_1 R_2}{a_1 b_2-a_2 b_1},
\frac{a_1 R_2-a_2 R_1}{a_1 b_2-a_2 b_1}\Big)
\]
and an unstable node $(0,0)$. 
Here $\dot{u}=\frac{d u}{d t}$ and 
\[
f(u,v):=(R_1-a_1u-b_1 v)u,\ \ g(u,v):=(R_2-a_2 u-b_2v)v.
\] 
We also assume: 

\vskip 6pt
\noindent
{\bf Assumption 1.} The following system has a solution:
\begin{equation}\label{eq:tw}
\left\{
\begin{array}{ll}\vspace{6pt}
D_1 U''  + f(U,V)=0,&z\in\R,\\
\vspace{6pt}
D_2 V'' + g(U,V)=0,&z\in\R,\\
\vspace{6pt}
(U(-\infty),V(-\infty))=(R_1,0),\\ 
(U(+\infty),V(+\infty))=(0,R_2).
\end{array}
\right.
\end{equation} 

\vskip 6pt

This assumption implies that the diffusion system
\begin{equation}\label{eq:1 dim cds}
\left\{
\begin{array}{ll}\vspace{6pt}
U_t=D_1 U_{zz} + f(U,V),&z\in\R,\ t\in\R,\\
V_t=D_2 V_{zz} + g(U,V),&z\in\R,\ t\in\R
\end{array}
\right.
\end{equation}
has a stationary wave solution. 

The existence and uniqueness of 
a traveling wave solution of \eqref{eq:1 dim cds} are shown by Kan-on \cite{K} 
under the condition \eqref{cd:sc}. 
The paper also shows continuously dependence of the traveling wave speed 
on the coefficients of the competition-diffusion system. 

As $\ep \rightarrow 0$, by a formal asymptotic analysis, 
the solution $(u^{\ep},v^{\ep})$ of \eqref{eq:crds} tends to a step function 
whose values are $(R_1,0)$, $(0,R_2)$ 
and the boundary $\Gamma(t)$ of the domain in which $(u^{\ep},v^{\ep})$ converges to $(R_1,0)$ 
moves according to the following equation
\begin{equation}\label{eq:interface}
V=-(N-1)k(x)\kappa-\frac{\partial}{\partial n} k(x)
-\frac{2k(x)(C+1)}{K(x)}\frac{\partial}{\partial n}K(x).
\end{equation}
Here $V$ is the normal velocity, $\kappa$ is the mean curvature and 
$n$ is the unit normal vector of $\Gamma(t)$ and $C$ is a constant defined by 
$(2.22)$ in \cite{HKMN}. K(x) is defined by 
\[
K(x)=\sqrt{\frac{h(x)}{k(x)}}
\]
(for more details, see Section 2 of \cite{HKMN}). 

Let $S$ denote the stable manifold of $(u^*,v^*)$ of \eqref{eq:cods}, that is, 
\[
S:=
\{(\xi,\eta)\in\R_+\times\R_+\mid
\lim_{\tau\rightarrow\infty}(u(\tau;\xi,\eta),v(\tau;\xi,\eta))
=(u^*,v^*)\},
\]
where $\R_+:=(0,\infty)$ and $(u(\tau;\xi,\eta),v(\tau;\xi,\eta))$ 
is a solution of \eqref{eq:cods} with initial data $(\xi,\eta)$. 
$S$ is called a separatrix and
\[
(\R_+\times\R_+)\backslash S=\Delta_1\cup\Delta_2,
\]
where 
\[
\begin{split}
\Delta_1:=
\{(\xi,\eta)\in\R_+\times\R_+\mid
\lim_{\tau\rightarrow\infty}(u(\tau;\xi,\eta),v(\tau;\xi,\eta))
=(R_1,0)\},\\
\Delta_2:=
\{(\xi,\eta)\in\R_+\times\R_+\mid
\lim_{\tau\rightarrow\infty}(u(\tau;\xi,\eta),v(\tau;\xi,\eta))
=(0,R_2)\}.
\end{split}
\]
For the proof of this result, see 
Chapter 12 of Hirsch and Smale \cite{HS}. 

\begin{remark}\label{rm:S and tw}
The stable manifold $S$ of $(u^*,v^*)$ of \eqref{eq:cods} can be described as follows.
\begin{equation}\label{eq:separatrix}
S=\{(u,v)\in\R_+\times\R_+\mid H(u,v)=0\},
\end{equation}
where $H\in C(\overline{\R_+}\times\overline{\R_+})\cap 
C^{\infty}(\R_+\times\R_+)$ satisfies 
\[
H(0,0)=0,\ H_u(u,v)<0\ \ 
{\rm and}\ \ H_v(u,v)>0\ \ {\rm for }\ \ u>0,\ v>0. 
\]
Moreover there is a function $\zeta\in C(\overline{\R_+})\cap C^{\infty}(\R_+)$ such that 
$\zeta'(u)>0$ for $u>0$ and 
\[
\begin{split}
&\zeta(u^*)=v^*,\ \ H(u,v)=v-\zeta(u)\  \ {\rm for}\ \ u\geq0,\ v\geq0\ \ 
{\rm or}\\ 
&\zeta(v^*)=u^*,\ \ 
H(u,v)=\zeta(v)-u\ \ {\rm for}\ \ u\geq0,\ v\geq0.
\end{split}
\]
In fact, by geometric theory of ODE systems, 
\eqref{eq:cods} has a locally stable manifold 
\[
\{(u,\zeta(u))\mid u^*-\ep_0<u<u^*+\ep_0\},
\]
where $\zeta$ is a smooth function satisfying 
$\zeta'(u^*)>0$, $\zeta(u^*)=v^*$ and 
\begin{equation}\label{eq:stable manifold}
\frac{d \zeta}{du}(u)=\frac{g(u,\zeta(u))}{f(u,\zeta(u))}.
\end{equation}
Let $\zeta(u)$, $u\in(U_*,U^*)$ be a function satisfying 
$\zeta'(u^*)>0$, $\zeta(u^*)=v^*$ and \eqref{eq:stable manifold} 
which has the maximal interval of existence. 
From $\zeta'(u^*)>0$, $\zeta(u^*)=v^*$, \eqref{eq:stable manifold} and 
\[
\begin{split}
&f>0,\ g>0\ \ {\rm on}\ \ (0,u^*)\times(0,v^*),\\
&f<0,\ g<0\ \ {\rm on}\ \ (u^*,\infty)\times(v^*,\infty),
\end{split}
\]
$\zeta$ is strictly increasing in $(U_*,U^*)$. 
Since, for each $(u_0,v_0)\in(0,u^*)\times(0,v^*)$, 
the solution $(u(t;u_0,v_0),v(t;u_0,v_0))$ of \eqref{eq:cods} tends to $(0,0)$ 
as $t\rightarrow-\infty$, 
\[
U_*=0\ \ {\rm and}\ \ \underset{u\rightarrow U_*}{\lim} \zeta(u)=0.
\] 
Furthermore it is also easily obtained that 
\begin{itemize}
\item[(1)]$U^*=\infty$ or
\item[(2)]$U^*<\infty$ and  
$\underset{u\rightarrow U^*}{\lim}\zeta(u)=\infty$. 
\end{itemize}
In the case that ($1$) holds, we may put $H(u,v)=v-\zeta(u)$. 
In the case that ($2$) holds, we may put $H(u,v)=\zeta^{-1}(v)-u$. 
Therefore \eqref{eq:separatrix} holds. 

On the other hand, a solution $(U,V)$ of \eqref{eq:tw} 
satisfies 
\[
U'(z)<0,\ V'(z)>0\ \ (z\in\R).
\] 
Therefore if $H(u,v)=v-\zeta(u)$, then 
\[
\begin{split}
&H(U(-\infty),V(-\infty))=H(R_1,0)=-\zeta(R_1)<0,\\
&H(U(+\infty),V(+\infty))=H(0,R_2)=R_2>0,
\end{split}
\]
\begin{equation}\label{eq:non degenerateness}
\frac{d}{d z}H(U,V)=(H_u(U,V),H_v(U,V))\cdot(U',V')>0.
\end{equation}
Hence $(U(z),V(z))$ $(z\in\R)$ and $S=\{(u,v)\mid H(u,v)=0\}$ intersect 
at exact one point, transversely. 
\end{remark}

We define $\Gamma_0$ as follows:
\[
\Gamma_0:=\{x\in\Omega\mid (u_0(x),v_0(x))\in S\}
\]
and we assume: 

\vskip 6pt 
\noindent
{\bf Assumption 2.} $u_0$, $v_0$ are continuous on $\overline{\Omega}$ 
and satisfy $|u_0|+|v_0|>0$ on $\overline{\Omega}$.

\vskip 6pt 
\noindent
{\bf Assumption 3.} $\Gamma_0$ is a smooth closed hypersurface in $\Omega$ 
and satisfies $\Gamma_0\cap\partial\Omega=\emptyset$.

\vskip 6pt 
\noindent
{\bf Assumption 4.} The classical solution $\Gamma(t)$ of \eqref{eq:interface} 
with initial data $\Gamma(0)=\Gamma_0$ exists on an interval $0\leq t\leq T$ 
and is a smooth closed hypersurface in $\Omega$ for every $t\in[0,T]$.

\vskip 6pt 
\noindent
{\bf Assumption 5.} There exists a constant $A_0>0$ such that 
\[
{\rm dist}_{\R^2}((u_0(x),v_0(x)),S)\geq A_0{\rm dist}(x,\Gamma_0),\ \ x\in\Omega,
\] 
where 
\[{\rm dist}_{\R^2}((u,v),S)
:=\underset{(\xi,\eta)\in S}{\inf}|(u-\xi,v-\eta)|,\ \ 
{\rm dist}(x,\Gamma_0)
:=\underset{y\in \Gamma_0}{\inf}|x-y|.
\]

\vskip 6pt 

\begin{remark}\label{rm:maximal time}
Under Assumption 3, by coordinate transformation and 
a theorem for a quasilinear parabolic equation in Lunardi \cite{L}, there exists $T_{\max}>0$ such that 
\eqref{eq:interface} possesses a unique smooth solution 
$\Gamma(t)$, $0\leq t<T_{\max}$. In the sequel, we can select any 
$T\in(0,T_{\max})$ in Assumption 4. 
\end{remark}

The hypersurface $\Gamma(t)$ divides $\Omega$ into two connected components, 
the inside of $\Gamma(t)$ and the outside of $\Gamma(t)$, denoted by 
$\Omega_{in}(t)$ and $\Omega_{out}(t)$, respectively. 
As in \cite{HKMN}, we may assume that 
$(u_0(x),v_0(x))$ satisfies 
\[
\Omega_{in}(0)=\{x\mid (u_0(x),v_0(x))\in\Delta_1\},\ \ 
\Omega_{out}(0)=\{x\mid (u_0(x),v_0(x))\in\Delta_2\}.
\]

Let $(u^{\ep},v^{\ep})$ be the solution of \eqref{eq:crds} 
and define $\Gamma^{\ep}(t)$, $\Omega_{in}^{\ep}(t)$, 
$\Omega_{out}^{\ep}(t)$ as follows:
\[
\begin{split}
&\Gamma^{\ep}(t):=\{x\mid (u^{\ep}(x,t),v^{\ep}(x,t))\in S\},\\
&\Omega_{in}^{\ep}(t):=\{x\mid (u^{\ep}(x,t),v^{\ep}(x,t))\in\Delta_1\},\\
&\Omega_{out}^{\ep}(t):=\{x\mid (u^{\ep}(x,t),v^{\ep}(x,t))\in\Delta_2\}.
\end{split}
\]

Let $d(x,t)$, $d^{\ep}(x,t)$ be the signed distance functions associated with 
$\Gamma(t)$, $\Gamma^{\ep}(t)$, respectively, that is, 
\[
d(x,t):=
\left\{
\begin{array}{l}\vspace{6pt}
-{\rm dist}(x,\Gamma(t))\ {\rm if}\ x\in\Omega_{in}(t),\\
{\rm dist}(x,\Gamma(t))\ {\rm if}\ x\in\Omega_{out}(t),
\end{array}
\right.
\]
\[
d^{\ep}(x,t):=
\left\{
\begin{array}{l}\vspace{6pt}
-{\rm dist}(x,\Gamma^{\ep}(t))\ {\rm if}\ x\in\Omega_{in}^{\ep}(t),\\
{\rm dist}(x,\Gamma^{\ep}(t))\ {\rm if}\ x\in\Omega_{out}^{\ep}(t).
\end{array}
\right.
\]

Now we state our main theorem.

\begin{theorem}\label{th:validity}
Let Assumptions 1,2,3,4 and 5 hold. Let $(u^{\ep},v^{\ep})$ be 
the solution of \eqref{eq:crds} and 
let $(\phi_0,\psi_0)$ be the solution of \eqref{eq:tw} satisfying 
$(\phi_0(0),\psi_0(0))\in S$. 
Put 
\[
t^{\ep}:=\ep^2|\log \ep|\ \ {\rm and}\  \ 
(U_0(\zeta,x),V_0(\zeta,x)):=(\phi_0(K(x)\zeta),\psi_0(K(x)\zeta)).
\]
Then there exists a constant $C>0$ such that the following hold for arbitrary $\mu>1$.
\begin{enumerate}
\item[(i)] If $\ep$ is small enough, then, for each $t\in[\mu Ct^{\ep},T]$, 
$\Gamma^{\ep}(t)$ can be expressed as a graph of a smooth function 
$\eta^{\ep}(\cdot,t)$ over $\Gamma(t)$ whose norm 
$\|\eta^{\epsilon}(\cdot,t)\|_{L^{\infty}(\Gamma(t))}$ and 
gradient $\nabla_{\Gamma(t)}\eta(x,t)$ 
on $\Gamma(t)$ tend to $0$ as $\ep\rightarrow0$ 
uniformly for $x\in\Gamma(t)$ and $t\in[\mu Ct^{\ep},T]$.
\item[(ii)] Let $d^{\ep}$ be the signed distance function associated with $\Gamma^{\ep}$. 
Then 
\begin{equation}\label{eq:validity 1}
\left\{
\begin{split}
\lim_{\ep\rightarrow0}\sup_{\mu Ct^{\ep}\leq t\leq T,\,x\in\overline{\Omega}}
\Big|u^{\ep}(x,t)-
U_0\Big(\frac{d^{\ep}(x,t)}{\ep},x\Big)
\Big|
=0,\\
\lim_{\ep\rightarrow0}\sup_{\mu Ct^{\ep}\leq t\leq T,\,x\in\overline{\Omega}}
\Big|v^{\ep}(x,t)-
V_0\Big(\frac{d^{\ep}(x,t)}{\ep},x\Big)
\Big|
=0.
\end{split}
\right.
\end{equation}
\item[(iii)] There exists a family of functions 
\[
\theta^{\ep}:\underset{0\leq t\leq T}{\cup}(\Gamma(t)\times\{t\})\rightarrow\R
\]
whose $L^{\infty}$-norms are bounded as $\ep\rightarrow0$, such that 
\begin{equation}\label{eq:validity 2}
\left\{
\begin{split}
\lim_{\ep\rightarrow0}\sup_{\mu Ct^{\ep}\leq t\leq T,\,x\in\overline{\Omega}}
\Big|u^{\ep}(x,t)-
U_0\Big(\frac{d(x,t)-\ep\theta^{\ep}(p(x,t),t)}{\ep},x\Big)
\Big|
=0,\\
\lim_{\ep\rightarrow0}\sup_{\mu Ct^{\ep}\leq t\leq T,\,x\in\overline{\Omega}}
\Big|v^{\ep}(x,t)-
V_0\Big(\frac{d(x,t)-\ep\theta^{\ep}(p(x,t),t)}{\ep},x\Big)
\Big|
=0,
\end{split}
\right.
\end{equation}
where $d$ denotes the signed distance function associated with $\Gamma$ 
and $p(x,t)$ denotes a point on $\Gamma(t)$ satisfying 
${\rm dist}(x,\Gamma(t))=|x-p(x,t)|$.
\end{enumerate}
\end{theorem}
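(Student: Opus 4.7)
The overall approach follows the three-ingredient scheme announced in the abstract: construction of super- and subsolutions built from the stationary wave $(\phi_0,\psi_0)$, a parabolic rescaling around an arbitrary transition point, and the Liouville type theorem for eternal solutions of the rescaled limit system. This mirrors the strategy of Alfaro and Matano \cite{AM} for the scalar Allen--Cahn equation; the new features here are that \eqref{eq:crds} is a competition-diffusion system without a global maximum principle in the original variables, and that the variable coefficients $k(x)$, $h(x)$ introduce the factor $K(x)$ inside the limit profile $(U_0,V_0)$.

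I would first invoke the generation-of-interface result of Hilhorst--Kim--Mimura--Ninomiya \cite{HKMN}: within time $\mu Ct^{\ep}=\mu C\ep^{2}|\log\ep|$ the solution $(u^{\ep},v^{\ep})$ is driven into a tubular neighborhood of the piecewise constant state $(R_1,0)\mathbf{1}_{\Omega_{in}(0)}+(0,R_2)\mathbf{1}_{\Omega_{out}(0)}$ with an $O(\ep)$-wide transition layer near $\Gamma(0)$. Starting from such a well-prepared datum, I would construct a pair of super- and subsolutions of \eqref{eq:crds} of the form
\[
(U_0,V_0)\Big(\frac{d^{\pm}_{\ep}(x,t)}{\ep},x\Big)\pm\ep\big(q^{\pm}(x,t),r^{\pm}(x,t)\big)+O(\ep^2),
\]
where $d^{\pm}_{\ep}$ is the signed distance to a modified interface $\Gamma^{\pm}_{\ep}(t)$ that evolves under \eqref{eq:interface} with normal velocity shifted by $\pm M\ep$, and $(q^{\pm},r^{\pm})$ are the first-order inner corrections obtained by solving the linearization of \eqref{eq:tw} at $(\phi_0,\psi_0)$ against a Fredholm condition that produces exactly the driving force in \eqref{eq:interface}, with its constant $C$ and the factor involving $K(x)$. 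Comparison is applied to the partially cooperative variables $(u,R_2-v)$, valid under \eqref{cd:sc}, and squeezes $(u^{\ep},v^{\ep})$ between these two modified waves on $[\mu Ct^{\ep},T]$. In particular $\Gamma^{\ep}(t)$ lies between $\Gamma^{-}_{\ep}(t)$ and $\Gamma^{+}_{\ep}(t)$, which are $O(\ep)$-close to $\Gamma(t)$, so that $\Gamma^{\ep}(t)$ is a $C^{1}$-graph over $\Gamma(t)$ with vanishing norm and tangential gradient, giving part (i).

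The profile convergence (ii) I would prove by contradiction using the blow-up method. Assuming \eqref{eq:validity 1} fails, one can find $\delta>0$, $\ep_n\downarrow 0$ and $(x_n,t_n)$ with $\mu Ct^{\ep_n}\le t_n\le T$ at which the error exceeds $\delta$; the sandwich from step two forces $\mathrm{dist}(x_n,\Gamma^{\ep_n}(t_n))=O(\ep_n)$ and a subsequence converges to $(x_\infty,t_\infty)$ with $x_\infty\in\Gamma(t_\infty)$. Rescale
\[
(\tilde u_n,\tilde v_n)(y,s)=(u^{\ep_n},v^{\ep_n})(x_n+\ep_n y,\,t_n+\ep_n^2 s),
\]
so that $(\tilde u_n,\tilde v_n)$ satisfies a parabolic system whose coefficients converge to the autonomous limit with diffusions $D_i k(x_\infty)$ and reactions $h(x_\infty)f$, $h(x_\infty)g$. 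Standard parabolic interior estimates together with the uniform $L^\infty$-bound from step two yield a locally uniformly convergent subsequence whose limit $(\tilde u,\tilde v)$ is an eternal solution on $\R^N\times\R$. The modified-wave sandwich, rescaled, confines $(\tilde u,\tilde v)$ between two shifts of the planar wave $(\phi_0(K(x_\infty)(\nu\cdot y+\cdot)),\psi_0(K(x_\infty)(\nu\cdot y+\cdot)))$, where $\nu$ is the limiting unit normal to $\Gamma(t_\infty)$ at $x_\infty$. The Liouville type theorem established later in the paper then identifies $(\tilde u,\tilde v)$ itself as such a wave. Evaluating at $(y,s)=(0,0)$ and using the geometric fact that $d^{\ep_n}(x_n+\ep_n y,t_n+\ep_n^2 s)/\ep_n$ converges to $\nu\cdot y+\alpha$ for the appropriate shift $\alpha$ contradicts the assumed failure of \eqref{eq:validity 1}. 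Part (iii) then reduces to (ii) by defining $\theta^{\ep}(p,t):=-d^{\ep}(p,t)/\ep$ on $\Gamma(t)$ and extending in an arbitrary bounded way off $\Gamma(t)$; the $L^\infty$-boundedness of $\theta^{\ep}$ follows directly from part (i), and \eqref{eq:validity 2} is \eqref{eq:validity 1} rewritten in terms of $d$ and $\theta^\ep$.

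The principal obstacle, in my view, is the super/subsolution construction in step two. The system lacks a comparison principle in the original variables, so the comparison has to be performed in the cooperative variables $(u,R_2-v)$ and every correction has to preserve this structure. More subtly, solving the linearized inner problem that defines $(q^\pm,r^\pm)$ requires the Fredholm alternative against the one-dimensional kernel generated by $(\phi_0',\psi_0')$, and the resulting compatibility condition has to reproduce the full second-order driving force in \eqref{eq:interface}, including the term $\frac{2k(x)(C+1)}{K(x)}\frac{\partial}{\partial n}K(x)$; tracing the algebraic cancellations that make this work, while keeping the global super/subsolution inequality uniformly in $\ep$, is the most delicate part of the argument.
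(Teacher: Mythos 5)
Your blow-up argument for part (ii) is the right approach and matches the paper's in its essentials: pass to a contradiction sequence $(x_j,t_j)$, rescale parabolically, extract a locally uniform limit which is an eternal solution of the frozen-coefficient system, sandwich it between two shifts of the planar wave via the (rescaled) sub- and supersolution bounds, and invoke the Liouville theorem (Lemma \ref{lm:Ltype}) to identify the limit as a single shifted wave. Two remarks on your version of this step. First, the paper does not construct sub- and supersolutions from scratch via a Fredholm/inner-expansion argument; Lemma \ref{lm:sandwich} is taken directly from Definition 6.1 and (7.1) of \cite{HKMN}, so the ``principal obstacle'' you identify is in fact a citation, and the genuinely new work is the Liouville theorem for systems and the geometric step below. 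Second, the assertion that $d^{\ep_j}(x_j+\ep_j y,t_j+\ep_j^2 s)/\ep_j$ converges to $\nu\cdot y+\alpha$ for some shift $\alpha$ is not something one knows in advance: $\Gamma^{\ep_j}(t_j)$ is defined as the preimage of the separatrix $S$, a one-dimensional curve, and a priori it could be geometrically wild inside the $O(\ep_j)$-wide tube. The paper handles this by rescaling around the foot point $p_j=p(x_j,t_j)$ with a rotation aligning $z_N$ with $n(p_j,t_j)$, by also tracking the nearest point $y_j\in\Gamma^{\ep_j}(t_j)$ to $x_j$, and by exploiting the Liouville conclusion $\{z:(w_1,w_2)(z,0)\in S\}=\{z_N=\theta_0\}$ together with the one-sided condition \eqref{lm:interior} to pin down $z_*=(0,\dots,0,z_{*,N})$, $\tilde z_*=(0,\dots,0,\theta_0)$, and hence the limit of $d^{\ep_j}(x_j,t_j)/\ep_j$. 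This is precisely the point the introduction flags as more delicate than the scalar case, and your proposal skates over it.

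The real gap is in part (i). You argue that because $\Gamma^{\ep}(t)$ is squeezed into an $O(\ep)$-neighborhood of $\Gamma(t)$, it is automatically a smooth graph over $\Gamma(t)$ whose tangential gradient tends to zero. That inference is false: a hypersurface confined to a thin tubular neighborhood of $\Gamma(t)$ can still be highly oscillatory and need not be a graph at all, let alone have small slope. What is actually needed is a transversality statement, namely that $n(p(x,t),t)\cdot\nabla\bigl(H(u^{\ep},v^{\ep})\bigr)(x,t)\geq c_1/\ep$ (the nondegeneracy \eqref{eq:non degenerateness 1}) together with the smallness of the tangential gradients of $u^{\ep},v^{\ep}$ (estimate \eqref{eq:degenerateness}); only then does the implicit function theorem produce the graph $\eta^{\ep}(\cdot,t)$ and the formula \eqref{eq:gradient} for its gradient. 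The paper establishes both estimates by exactly the same rescaling-plus-Liouville argument used for (ii), using \eqref{eq:non degenerateness} from Remark \ref{rm:S and tw} (transversality of the traveling wave orbit to $S$) as the source of the lower bound. So the proof of (ii) must come first and the nondegeneracy/graph part of (i) is a corollary of the blow-up machinery, not of the sandwich. Your proof of (iii) via $\theta^{\ep}(p,t)=-d^{\ep}(p,t)/\ep$ is on the right track, but note that justifying it also uses the $C^1$ information from (i), not merely the Hausdorff distance bound.
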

The statement (i) means that the interface $\Gamma^{\epsilon}(t)$ of the solution converges 
to the hypersurface $\Gamma(t)$ as $\epsilon\rightarrow0$ in the $C^1$ topology, 
where $\Gamma(t)$ is the classical solution of \eqref{eq:interface} given in Assumption 4. 
The statement (iii) implies that the principal term of the formal expansion gives uniform approximation of the real solution.  
In the proof of Theorem \ref{th:validity}, we use an idea similar to what is found in \cite{AM}
for the scalar Allen--Cahn equation. Namely, the proof is based on 
a rescaling argument, the super--subsolution method and 
a Liouville type result for eternal solutions of some competition-diffusion systems. 
However, in our problem, the interface $\Gamma^{\epsilon}(t)$ of the solution 
is defined as the inverse image of the one-dimensional separatrix $S$ rather than that of a single point, 
which makes the estimates of the distance between $\Gamma^{\epsilon}(t)$ and $\Gamma(t)$ 
more involved than in the scalar Allen--Cahn case.  
We also need to extend the Liouville type results in Berestycki and Hamel \cite{BH1,BH2} 
to parabolic systems. 

The rest of the paper is organized as follows. In Section \ref{sc:Ltype}, 
we state the Liouville type theorems for eternal solutions of parabolic systems 
which play a key rule in proving the main theorem. 
Though what we need in the proof of Theorem \ref{th:validity} is the Liouville type 
theorem for a two-species conpetition-diffusion equation, in view of the importance 
of such Liouville type theorems, we present the results in a more general setting, 
namely for $m$-species cooperation-diffusion systems possibly with 
spatially periodic or time periodic coefficients. 
In Section \ref{sc:proof of main}, 
we state two lemmas, Lemmas \ref{lm:sandwich} and \ref{lm:Ltype} 
that are used in the proof of Theorem \ref{th:validity} and 
we prove Theorem \ref{th:validity}. At the end of Section \ref{sc:proof of main}, 
we prove 
Lemmas \ref{lm:sandwich} and \ref{lm:Ltype}. 
In Section \ref{sc:proof of Ltype}, we prove the Liouville type theorems.
\section{Liouville type theorems for eternal solutions of a parabolic system}
\label{sc:Ltype}
Before starting the proof of the main theorem, 
we present Liouville type theorems for eternal solutions of reaction-diffusion systems. 
These are extensions of similar 
Liouville type results of Berestycki and Hamel \cite{BH1,BH2} to systems of reaction-diffusion 
equations. 
Though what we need in the proof of the main theorem is only a special case of such 
Liouville type results, 
we state them in a rather general setting since we think those results are important 
in their own right.
\subsection{Statement of Liouville type theorems (homogeneous case)}
Let us first state a result on a reaction-diffusion system of the form:
\begin{equation}\label{eq:crds 2}
\left\{
\begin{split}
&u_t=D_1\Delta u+ f_1(u,v),\ \ x\in\R^N,\ t\in\R,\\
&v_t=D_2\Delta v +f_2(u,v),\ \ x\in\R^N,\ t\in\R,
\end{split}
\right.
\end{equation}
where 
$D_1$, $D_2$ are positive constants and 
$f_1, f_2$ are smooth functions such that \eqref{eq:crds 2} 
is a {\it competition-diffusion system}, that is, it holds that 
\begin{equation}\label{cd:irreducible}
f_{1,v}=\partial_{v}f_1<0,\ f_{2,u}<0\ \ {\rm in}\ \ 
(p_1^{-},p_1^{+})\times(p_2^{+},p_2^{-}).
\end{equation}
Furthermore, we assume that 
$F=(f_1,f_2)$
has two linearly stable equilibria 
\[
p^+=(p^+_1,p^+_2),\ 
p^-=(p^-_1,p^-_2)\ \  ( p^-_1<p^+_1,\  p^+_2<p^-_2),
\] 
that is, for some constants $\lambda_{\pm}>0$ and vectors 
$\varphi^{\pm}={}^t (\varphi^{\pm}_1,\varphi^{\pm}_2)$ 
$(\varphi_2^{\pm}<0<\varphi^{\pm}_1)$,
\begin{equation}\label{cd:bistable}
F(p^{\pm})=(0,0),\ \ D F(p^{\pm})\varphi^{\pm}=-\lambda_{\pm}\varphi^{\pm},
\end{equation}
where 
\[
D F(p^{\pm})=
\left(
\begin{array}{cc}
f_{1,u}(p^{\pm})&f_{1,v}(p^{\pm})\\
f_{2,u}(p^{\pm})&f_{2,v}(p^{\pm})
\end{array}
\right).
\] 
We also assume: 
\begin{equation}\tag{A}
\left\{
\begin{split}
&\eqref{eq:crds 2}\ {\rm has\ a\ traveling\ wave\ solution}\\ 
&(u(x,t),v(x,t))=(\phi_1(n\cdot x-ct),\phi_2(n\cdot x-ct))\\ 
&{\rm with\ a\ direction}\ n\in\R^N\ 
(|n|=1)\ \ {\rm and\ a\ speed}\ c\in\R\  {\rm satisfying}\\ 
&(\phi_1(\mp\infty),\phi_2(\mp\infty))=p^{\pm}\ \ 
{\rm and}\ 
\phi_1'<0,\ \phi_2'>0\ {\rm in}\ \R.
\end{split}
\right.
\end{equation}
\begin{remark}
The assumption $({\rm A})$ means that the system \eqref{eq:crds 2} has a planar 
wave solution whose direction and speed are $n$ and $c$, respectively. 
\end{remark}

\begin{theorem}
[Liouville type theorem for a competition-diffusion system]
\label{th:Ltype}
Assume $({\rm A})$, \eqref{cd:bistable} 
and \eqref{cd:irreducible}. 
Let $(u(x,t),v(x,t))$ $(x\in\R^N,\ t\in\R)$ be a solution of \eqref{eq:crds 2} which satisfies that 
there are a unit vector $n$, some constants $c\in\R$, $a<b$ such that, 
for all $(x,t)\in\R^N\times\R$, 
\begin{equation}\label{eq:sandwich 1}
\left\{
\begin{split}
&\phi_1(n\cdot x-ct-a)\leq u(x,t)\leq\phi_1(n\cdot x-ct-b),\\
&\phi_2(n\cdot x-ct-b)\leq v(x,t)\leq \phi_2(n\cdot x-ct-a),
\end{split}
\right.
\end{equation}
where $(\phi_1,\phi_2)$ is a function satisfying $({\rm A})$ with the speed $c$. 
Then there exists $\theta_0\in(a,b)$ such that, for all $(x,t)\in\R^N\times\R$,
\[
(u(x,t),v(x,t))=(\phi_1(n\cdot x-ct-\theta_0),\phi_2(n\cdot x-ct-\theta_0)).
\]
\end{theorem}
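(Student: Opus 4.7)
The plan is to run a Berestycki--Hamel sliding argument, made possible by the fact that \eqref{eq:crds 2} is order-preserving under the partial order
\[
(u_1,v_1)\preceq(u_2,v_2)\iff u_1\leq u_2\text{ and }v_1\geq v_2,
\]
thanks to the sign conditions \eqref{cd:irreducible}. Set $\Phi_\theta(x,t):=(\phi_1(n\cdot x-ct-\theta),\phi_2(n\cdot x-ct-\theta))$. Because $\phi_1'<0$ and $\phi_2'>0$, the family $\theta\mapsto\Phi_\theta$ is strictly $\preceq$-increasing, and the sandwich \eqref{eq:sandwich 1} reads $\Phi_a\preceq(u,v)\preceq\Phi_b$. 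Define
\[
\theta_0:=\sup\{\theta\in[a,b]\mid\Phi_\theta\preceq(u,v)\text{ on }\R^N\times\R\},
\]
so that $\Phi_{\theta_0}\preceq(u,v)$ everywhere by continuity. The goal is to prove $\Phi_{\theta_0}\equiv(u,v)$, which gives the theorem with shift $\theta_0$.

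If $\Phi_{\theta_0}$ touches $(u,v)$ at some point $(x_0,t_0)$, the $\preceq$-nonnegative difference satisfies a weakly coupled linear parabolic system whose off-diagonal coefficients are nonnegative (again by \eqref{cd:irreducible}). The strong maximum principle for such cooperative systems, applied on $\R^N\times(-\infty,t_0]$ and then propagated forward by uniqueness of the Cauchy problem, forces $\Phi_{\theta_0}\equiv(u,v)$. Otherwise $\Phi_{\theta_0}\prec(u,v)$ strictly everywhere, and I aim at a contradiction by showing that $\theta_0$ can be increased by some small $\eta>0$. Two ingredients are needed. On each slab $\{|n\cdot x-ct|\leq R\}$ a parabolic compactness argument based on interior Schauder estimates gives a uniform $\preceq$-gap between $(u,v)$ and $\Phi_{\theta_0}$: if not, translating in $(x,t)$ along a sequence where the gap vanishes and extracting a limit produces an eternal solution of \eqref{eq:crds 2} that touches a translate of $\Phi_{\theta_0}$, reducing to the previous paragraph. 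On the tails $|n\cdot x-ct|>R$, both $(u,v)$ and $\Phi_{\theta_0}$ lie in an arbitrarily small neighborhood of $p^\pm$ uniformly in $t$, since the sandwich traps them between translates of $(\phi_1,\phi_2)$ that share the same limits $p^\pm$. There, the linearization $DF(p^\pm)$ has principal eigenvalue $-\lambda_\pm<0$ with eigenvectors $\varphi^\pm$ that are strictly $\preceq$-positive (thanks to $\varphi_2^\pm<0<\varphi_1^\pm$), so that barriers of the form $\Phi_{\theta_0+\eta}\mp\delta\varphi^\pm$ remain $\preceq$-below $(u,v)$ on the tails for suitably small $\eta,\delta>0$. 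Combining the two bounds contradicts the maximality of $\theta_0$.

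The step I expect to be the main obstacle is the tail analysis. The sandwich only yields $C^0$ control of $(u,v)-p^\pm$ in the tails, so producing honest sub/supersolutions requires pairing this with parabolic regularity and the spectral data of $DF(p^\pm)$, carefully matching the slide $\eta$ against the amplitude $\delta$, the rates $\lambda_\pm$, and the Lipschitz constant of $F$ near $p^\pm$. A subsidiary delicacy is that the compactness extraction used in the slab estimate takes place in $(x,t)$-space, so one must verify that the limiting eternal solution still satisfies a sandwich of the same type after joint translation; this is automatic because \eqref{eq:sandwich 1} is invariant under $(x,t)\mapsto(x+x_0,t+t_0)$ once the phase $\xi=n\cdot x-ct$ is absorbed into the shift parameters.
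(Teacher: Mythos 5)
Your overall framework is the right one and matches the paper's strategy: pass to the cooperative order $(u_1,v_1)\preceq(u_2,v_2)\iff u_1\le u_2,\ v_1\ge v_2$, slide a monotone family against the solution, split the plane into a bounded slab $\{|n\cdot x-ct|\le R\}$ and two tails, get a uniform gap on the slab from parabolic compactness, and use barriers built from the principal eigenvectors $\varphi^\pm$ on the tails. The tail analysis you sketch is essentially the one carried out in the paper (via \eqref{eq:near p^+}, \eqref{eq:near p^-}); the delicacy you flag there is real but surmountable.

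The genuine gap is in the slab compactness step, and it stems from a structural choice: you compare $(u,v)$ directly to the wave $\Phi_\theta$, whereas the paper compares $u$ to translates of $u$ \emph{itself}, namely $w^\sigma(x,t):=u(x_1+c\tau+\sigma,\,x'+\rho,\,t+\tau)$, with the slide parameter $\sigma$ and an auxiliary translation $(\rho,\tau)$. With your choice, when the slab gap is assumed to vanish, the extracted limit $(U,V)$ touches a translate of $\Phi_{\theta_0}$ from above, and the strong maximum principle forces $(U,V)\equiv\Phi_{\theta_0'}$. But this is not a contradiction. The infimum of $(u,v)-\Phi_{\theta_0}$ over the slab can tend to $0$ along a sequence escaping to infinity in $(x',t)$ without the original solution ever touching the wave; the conclusion only concerns the limit, which is a \emph{different} eternal solution, and ``the limit is a wave'' is perfectly consistent with $\Phi_{\theta_0}\prec(u,v)$ strictly. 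So the ``reducing to the previous paragraph'' step does not close.

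The paper's comparison to $w^\sigma$ is precisely what supplies the missing contradiction. When the slab infimum of $u-w^{\sigma^*}$ fails to be bounded away from zero, the limit $U$ obeys, by the strong maximum principle, the \emph{functional equation} $U_{l_0}(x,t)=U_{l_0}(x_1+c\tau+\sigma^*,\,x'+\rho,\,t+\tau)$, which can be iterated: $U_{l_0}(x,0)=U_{l_0}(x_1-cn\tau-n\sigma^*,\,x'-n\rho,\,-n\tau)$, and since $\sigma^*>0$ the phase $(x_1-cn\tau-n\sigma^*)-c(-n\tau)=x_1-n\sigma^*\to-\infty$, so the sandwich squeezes $U_{l_0}(x,0)$ to $p^+_{l_0}$, contradicting $U(\cdot,0)\preceq\phi(\cdot-b)\ll p^+$. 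This iterative mechanism is unavailable in your version because a touching against the wave yields an identity, not a self-referential translation relation. (The paper then concludes only that $u$ equals \emph{some} monotone front $\widetilde\phi$, and needs the separate Lemma~\ref{lm:uniqueness}, which invokes the strong comparison principle for irreducible systems, to pin $\widetilde\phi$ down as a shift of $\phi$. Your one-shot argument skips this decomposition, which is consistent with the fact that it overreaches.) To repair your proof you would need to replace the sliding family $\Phi_\theta$ by the self-translates $w^\sigma$ in the slab argument, and then add a uniqueness step for monotone front profiles.
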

Roughly speaking, Theorem \ref{th:Ltype} means 
that eternal solutions that are sandwiched between two planar wave solutions 
are precisely planar waves. 
The following Theorems \ref{th:Ltype 1}, \ref{th:Ltype 2} and \ref{th:Ltype 3} have similar meaning. 

Let us reformulate Theorem \ref{th:Ltype} in more general settings. 
As one easily sees, 
a two-species competition system can be converted to a two-species cooperation-diffusion system 
by the change of variables $(u,v)\mapsto(u,-v)$. Therefore it suffices  to state the results 
for $m$-species cooperation-diffusion systems. 

First, let us define  
order relations in $\R^k$ and in $X=C(\R^l:\R^k)$ as follows:  
\[
\begin{split}
&(u_1,u_2,\cdots,u_k)\preceq (v_1,v_2,\cdots,u_k)\ \ {\rm if}\ \ u_i\leq v_i,\ \ (i=1,2,\cdots,k),\\
&u:=(u_1,u_2,\cdots,u_k)\prec v:=(v_1,v_2,\cdots,v_k)\ \ 
{\rm if}\ \ u\preceq v 
\ \ {\rm and}\ \ u\not= v,\\
&(u_1,u_2,\cdots,u_k)\ll (v_1,v_2,\cdots,v_k)\ \ {\rm if}\ \ u_i< v_i,\ \ (i=1,2,\cdots,k),
\end{split}
\]
\[
\begin{split}
&u\preceq v\ \ {\rm if}\ \ u(x)\preceq v(x)\ \ {\rm for\ all}\ \ x\in\R^l,\\
&u\prec v\ \ {\rm if}\ \ u\preceq v\ \ {\rm and}\ \ u\not=v,\\
&u\ll v\ \ {\rm if}\ \ u(x)\ll v(x)\ \ {\rm for\ all}\ \ x\in\R^l.
\end{split}
\]
Now we consider a reaction-diffusion system of the form:
\begin{equation}\label{eq:crds 3}
\left\{
\begin{split}
u_{1,t}=&D_1\Delta u_1+f_1(u_1,u_2,\cdots,u_m),\ \ x\in\R^N,\ t\in\R,\\
u_{2,t}=&D_2\Delta u_2+f_2(u_1,u_2,\cdots,u_m),\ \ x\in\R^N,\ t\in\R,\\
&\vdots\\
u_{m,t}=&D_m\Delta u_m+f_m(u_1,u_2,\cdots,u_m),\ \ x\in\R^N,\ t\in\R,
\end{split}
\right.
\end{equation}
where  
$D_l$ $(l=1,2,\cdots,m)$ are positive constants and 
$f_1,f_2,\cdots,f_m$ are smooth functions such that  
\eqref{eq:crds 3} is a {\it cooperation-diffusion system}, that is, 
it holds that 
\begin{equation}\label{cd:cs 1}
f_{k,u_l}=\partial_{u_l}f_k\geq0\ \ (k\not=l)\ \ 
{\rm in}\ \ 
(p^-,p^+):=
(p_1^{-},p_1^{+})\times(p_2^{-},p_2^{+})\times\cdots\times(p_m^{-},p_m^{+}),
\end{equation} 
\begin{equation}\label{cd:irreducible 1}
DF(u)\ {\rm is\ an\ irreducible\ matrix}
\ {\rm for\ each}\ 
u\in(p^{-},p^{+}),
\end{equation}
where 
\[
D F(p^{\pm})=
\left(
\begin{array}{cccc}
f_{1,u_1}(p^{\pm})&f_{1,u_2}(p^{\pm})&\cdots&f_{1,u_m}(p^{\pm})\\
f_{2,u_1}(p^{\pm})&f_{2,u_2}(p^{\pm})&\cdots&f_{2,u_m}(p^{\pm})\\
\vdots&\vdots&\ddots&\vdots\\
f_{m,u_1}(p^{\pm})&f_{m,u_2}(p^{\pm})&\cdots&f_{m,u_m}(p^{\pm})
\end{array}
\right).
\] 
We say that an $m\times m$ matrix $A=(a^{kl})$ is {\it reducible} if 
there is $\emptyset\not=\Lambda\subsetneq\{1,2,\cdots,m\}$ such that 
\[
a^{kl}=0\ \ {\rm for}\ \ k\in\Lambda,\ l\not\in\Lambda.
\]
We say that an $m\times m$ matrix $A$ is {\it irreducible} if $A$ is not reducible.

Furthermore, we assume that 
$F=(f_1,f_2,\cdots,f_m)$
has two linearly stable equilibria 
\[
p^+=(p^+_1,p^+_2,\cdots,p^+_m)\gg
p^-=(p^-_1,p^-_2,\cdots,p^-_m),
\] 
that is, for some constants $\lambda_{\pm}>0$ and unit vectors
\[
\varphi^{\pm}={}^t (\varphi^{\pm}_1,\varphi^{\pm}_2,\cdots,\varphi^{\pm}_m)
\gg {}^t (0,0,\cdots,0),
\]
\begin{equation}\label{cd:bistable 1}
F(p^{\pm})=(0,0,\cdots,0),\ \ D F(p^{\pm})\varphi^{\pm}=-\lambda_{\pm}\varphi^{\pm}.
\end{equation}
We also assume: 
\begin{equation}\tag{A'}
\left\{
\begin{split}
&\eqref{eq:crds 3}\ {\rm has\ a\ traveling\ wave\ solution}\ 
u(x,t)=\phi(n\cdot x-ct)\\ 
&{\rm with\ a\ direction}\ n\in\R^N\ (|n|=1)\ \ {\rm and\ a\ speed}\ c\in\R\\  
&{\rm satisfying}\ \phi(\mp\infty)
=p^{\pm}\ {\rm and}\ 
\phi'\ll(0,0,\cdots,0)\ {\rm in}\ \R.
\end{split}
\right.
\end{equation}

\begin{theorem}
[Liouville type theorem for a cooperation-diffusion system]
\label{th:Ltype 1}
Assume $({\rm A'})$, \eqref{cd:bistable 1} and \eqref{cd:cs 1}. 
Let 
$
u(x,t)\ \ (x\in\R^N,\ t\in\R)
$ 
be a solution of \eqref{eq:crds 3} which satisfies that 
there are a unit vector $n$, some constants $c\in\R$, $a<b$ such that, 
for all $(x,t)\in\R^N\times\R$, 
\begin{equation}\label{eq:sandwich 1}
\phi(n\cdot x-ct-a)\preceq u(x,t)\preceq\phi(n\cdot x-ct-b),
\end{equation}
where $\phi$ is a function satisfying $({\rm A'})$ 
with the speed $c$. 
Then there exists a function $\widetilde{\phi}$ satisfying $({\rm A'})$
such that $u(x,t)=\widetilde{\phi}(n\cdot x-ct)$.
If, in addition, assume \eqref{cd:irreducible 1}, then  
there exists $\theta_0\in(a,b)$ such that 
\[
u(x,t)=\phi(n\cdot x-ct-\theta_0)\ \ 
{\rm for\ all}\ \ (x,t)\in\R^N\times\R.
\]
\end{theorem}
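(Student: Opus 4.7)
The plan is to adapt the sliding method of Berestycki--Hamel \cite{BH1,BH2} to the cooperative system \eqref{eq:crds 3}. The cooperativity hypothesis \eqref{cd:cs 1} ensures that the parabolic comparison principle applies componentwise to every linearized system arising from the difference of two ordered solutions, and this is what drives each step. I would first pass to the moving coordinates $\xi = n\cdot x - ct$ and $y \in \R^{N-1}$ perpendicular to $n$, in which $\phi(\xi)$ becomes a stationary solution of
\begin{equation*}
u_t = D\Delta u + c\, u_\xi + F(u),
\end{equation*}
and the sandwich \eqref{eq:sandwich 1} reads $\phi(\xi-a) \preceq u(\xi,y,t) \preceq \phi(\xi-b)$ on $\R \times \R^{N-1} \times \R$.

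The heart of the proof is a shift-sliding in the $\xi$-direction. Writing $\phi^\tau(\xi) := \phi(\xi-\tau)$, I would define
\begin{equation*}
\tau^* := \inf\{\tau \in \R : \phi^\tau \succeq u \text{ on } \R\times\R^{N-1}\times\R\},
\end{equation*}
which by the sandwich lies in $[a,b]$. The gap $w := \phi^{\tau^*} - u \succeq 0$ satisfies
\begin{equation*}
w_t = D\Delta w + c\, w_\xi + B(\xi,y,t)\, w,
\end{equation*}
where $B$ is the Jacobian of $F$ averaged along the segment from $u$ to $\phi^{\tau^*}$ and so has nonnegative off-diagonal entries thanks to \eqref{cd:cs 1}. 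The argument then splits dichotomously: either $w \equiv 0$, whence $u = \phi^{\tau^*}$; or $w$ is nonzero at some point, in which case I would slide further by a small $\varepsilon > 0$ to obtain $\phi^{\tau^*-\varepsilon} \succeq u$, contradicting the infimum definition of $\tau^*$. Under the irreducibility assumption \eqref{cd:irreducible 1}, the Hopf-type strong maximum principle for irreducible cooperative parabolic systems propagates positivity from a single component and point to all components everywhere, so $w \gg 0$ pointwise and the sliding runs cleanly; this yields $u(x,t) = \phi(n\cdot x - ct - \tau^*)$, giving the second conclusion with $\theta_0 := \tau^*\in(a,b)$. In the non-irreducible case strict positivity may fail on invariant subsets of components, but working block by block along the reducible decomposition of $DF$ and iterating the same sliding on each irreducible block still produces planar dependence $u(x,t) = \widetilde\phi(n\cdot x - ct)$ for some traveling wave $\widetilde\phi$ satisfying $({\rm A}')$, which is the first conclusion.

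The principal technical obstacle is the step in which $\tau^*$ is actually decreased. Because both $u$ and $\phi^{\tau^*}$ converge to $p^{\pm}$ as $\xi \to \mp\infty$, the gap $w$ is forced to zero at infinity, so strict positivity on compact slabs alone does not suffice: one must rule out ``contact at infinity''. The remedy is an exponential-barrier construction using the stable eigenvectors $\varphi^{\pm}$ and decay rates $\lambda_{\pm}$ from \eqref{cd:bistable 1}. Near $\xi = -\infty$ one linearizes at $p^+$ and observes that $\phi^{\tau^*}-\phi^{\tau^*-\varepsilon}$ has leading order $\varepsilon \,|\phi'(\xi-\tau^*)|$, which, by standard ODE asymptotics for the bistable wave, decays like $\varphi^{+} e^{\lambda_{+}\xi}$ with a strictly positive prefactor inherited from the strict monotonicity $\phi' \ll 0$; the analogous statement at $\xi = +\infty$ uses $p^-$, $\varphi^{-}$, $\lambda_{-}$. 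A super/subsolution argument with these $\varphi^{\pm}$-directed exponential tails shows $\phi^{\tau^*-\varepsilon}\succeq u$ outside a large slab for $\varepsilon$ small, and inside the slab it follows from the strict positivity of $w$ combined with continuity in $\tau$, closing the contradiction.

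Once $u$ is identified with a profile of the form $\widetilde\phi(n\cdot x - ct)$, a short direct comparison argument verifies that $\widetilde\phi$ satisfies all the requirements of $({\rm A}')$ (monotonicity and limits are inherited from the sandwich, and the traveling-wave ODE holds by substitution), and under \eqref{cd:irreducible 1} the classical uniqueness of bistable fronts up to translation -- which is itself a consequence of the same sliding technique applied to $\widetilde\phi$ and $\phi$ -- pins $\widetilde\phi$ to a shift of $\phi$ as claimed.
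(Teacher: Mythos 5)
Your proposed sliding — pinching $u$ between translates $\phi^\tau$ of the \emph{given} wave and pushing $\tau$ down to an optimal $\tau^*$ — is not the route the paper takes, and it contains a genuine gap at the critical step. When the contact between $\phi^{\tau^*}$ and $u$ happens ``at infinity'' along a sequence $(x_n,t_n)$ with $n\cdot x_n - ct_n$ bounded, the translates $u(\cdot + x_n,\cdot + t_n)$ converge (along a subsequence) to some eternal solution $U$, and the strong maximum principle then gives $U\equiv\phi^{\tau^*}$ (suitably shifted). But $U$ is a limit of \emph{translates} of $u$, not $u$ itself; identifying $U$ with a translate of $\phi$ yields no contradiction with the minimality of $\tau^*$, and so you cannot exclude the possibility that $\tau^*$ is critical without $u$ itself being a wave. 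This is exactly why the paper does not slide $\phi$ against $u$. Instead it slides translates of $u$ against $u$ itself: for arbitrary $(\rho,\tau)\in\R^{N-1}\times\R$ it sets $w^\sigma(x,t) := u(x_1+c\tau+\sigma,\,x'+\rho,\,t+\tau)$ and shows $\sigma^*\leq 0$. The decisive advantage is that the limit $U$ obtained from a contact sequence then satisfies the \emph{self-translation identity} $U(x,t) = U(x_1+c\tau+\sigma^*,x'+\rho,t+\tau)$, which, iterated $n$ times with $\sigma^*>0$, forces $U$ to converge to $p^+$ or $p^-$ on compact sets, contradicting the sandwich bound $\phi(x_1-ct-a)\preceq U\preceq \phi(x_1-ct-b)$. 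Since $(\rho,\tau)$ is arbitrary, $\sigma^*\leq 0$ for all directions yields the planar, monotone structure $u=\widetilde\phi(n\cdot x-ct)$ \emph{without} any irreducibility — giving the first conclusion — after which the identification $\widetilde\phi=\phi(\cdot-\theta_0)$ is a separate uniqueness lemma (Lemma~\ref{lm:uniqueness}) under \eqref{cd:irreducible 1}.

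Two further points. First, your proposed fall-back in the non-irreducible case (``block by block along the reducible decomposition of $DF$'') is not developed and I don't think it can be, because the reduction pattern of $DF(u)$ may vary with $u$ and because your $\phi$-vs-$u$ sliding would, at best, identify $u$ with a shift of $\phi$, whereas in the reducible case the theorem only promises some traveling wave $\widetilde\phi$, not necessarily a shift of the given $\phi$. Second, the exponential-barrier step as you describe it rests on the claim that $\phi - p^{\pm}$ decays like $\varphi^{\pm} e^{\lambda_{\pm}\xi}$, but $\lambda_{\pm}$ are the kinetic eigenvalues of $-DF(p^{\pm})$, not the spatial decay exponents of the wave profile (those are roots of the characteristic equation of $D\partial_\xi^2 + c\partial_\xi + DF(p^{\pm})$). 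The paper's tail argument avoids this altogether: it uses only the monotonicity of $F$ in the $\varphi^{\pm}$ directions near $p^{\pm}$ (inequalities \eqref{eq:near p^+}, \eqref{eq:near p^-}) and a sliding in the $\varphi^{\pm}$-direction (the quantities $\ep_{\pm}$) on the tails $S_{\pm}$, which needs no asymptotic rate information at all.
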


\subsection{Statement of Liouville type theorems (inhomogeneous case)}

The Liouville type theorem in the previous subsection can be extended to 
more general systems. 
First we consider the following time periodic systems: 
\begin{equation}\label{eq:t-crds}
\left\{
\begin{split}
u_{l,t}=&\sum_{i,j=1}^N D_l^{ij}(t)u_{l,x_ix_j}+q_l(t)\cdot\nabla u_l+f_l(t,u_1,\cdots,u_m)\\
&{\rm for }\ \ x\in\R^N,\ t\in\R\ \ (l=1,2,\cdots,m),
\end{split}
\right.
\end{equation}
where 
\[
\begin{split}
&D^{ij}_l(t)\ \ (i,j=1,2,\cdots,N),\ \ q_l(t)\in\R^N\ \ 
(l=1,2,\cdots,m),\\ 
&F(t,u_1,\cdots,u_m)=(f_1,f_2,\cdots,f_m)
\end{split}
\] 
are 
H$\ddot{\rm o}$lder continuous in $t$, smooth in $(u_1,u_2,\cdots,u_m)$ 
and $f_1,f_2,\cdots,f_m$ satisfy 
\begin{equation}\label{cd:t-cs}
f_{k,u_l}(t,u)=\partial_{u_l} f_k\geq0\ \ (k\not=l)\ \ 
{\rm for\ each}\ \ u\in(p^-(t),p^+(t)),\ t\in\R,
\end{equation} 
\begin{equation}\label{cd:t-irreducible}
DF(t,u)\ {\rm is\ an\ irreducible\ matrix}\ 
{\rm for\ each}\ u\in(p^-(t),p^+(t)),\ t\in\R,
\end{equation}
where 
\[
D F(t,p^{\pm})=
\left(
\begin{array}{cccc}
f_{1,u_1}(t,p^{\pm})&f_{1,u_2}(t,p^{\pm})&\cdots&f_{1,u_m}(t,p^{\pm})\\
f_{2,u_1}(t,p^{\pm})&f_{2,u_2}(t,p^{\pm})&\cdots&f_{2,u_m}(t,p^{\pm})\\
\vdots&\vdots&\ddots&\vdots\\
f_{m,u_1}(t,p^{\pm})&f_{m,u_2}(t,p^{\pm})&\cdots&f_{m,u_m}(t,p^{\pm}).
\end{array}
\right).
\]
We assume that there are $\alpha_2\geq\alpha_1>0$, $T>0$ such that 
\begin{equation}\label{cd:ellipticity}
\alpha_1|\xi|^2\leq\sum_{i,j=1}^N D^{ij}_l(t)\xi_i\xi_j\leq\alpha_2|\xi|^2\ \ 
{\rm for}\ \ t\in\R,\ \xi\in\R^N\ (l=1,2,\cdots,m),
\end{equation}
\begin{equation}\label{cd:T-periodic}
\begin{split}
&D^{ij}_l(t+T)=D^{ij}_l(t),\ q_l(t+T)=q_l(t),\ 
f_l(t+T,\cdot)=f_l(t,\cdot)\\ 
&{\rm for}\ \ t\in\R\ \ 
(i,j=1,2,\cdots,N;\ l=1,2,\cdots,m)
\end{split}
\end{equation}
and there are two smooth functions 
\[
p^+(t)=(p^+_1(t),p^+_2(t),\cdots,p^+_m(t))\gg 
p^-(t)=(p^-_1(t),p^-_2(t),\cdots,p^-_m(t))
\] 
such that, for some constants $\lambda_{\pm}>0$ and vector valued functions 
\[
\varphi^{\pm}={}^t (\varphi^{\pm}_1,\varphi^{\pm}_2,\cdots,\varphi^{\pm}_m)
\gg {}^t (0,0,\cdots,0),
\]  
\begin{equation}\label{cd:t-bistable}
\begin{split}
&\frac{d p^{\pm}}{dt}-F(t,p^{\pm})=(0,0,\dots,0),\ \ 
\frac{d \varphi^{\pm}}{d t}-D F(t,p^{\pm})\varphi^{\pm}=\lambda_{\pm}\varphi^{\pm},\\
&p^{\pm}(\cdot+T)=p^{\pm}(\cdot),\ 
\varphi^{\pm}(\cdot+T)=\varphi^{\pm}(\cdot).
\end{split}
\end{equation}
We also assume: 
\begin{equation}\tag{A1}
\left\{
\begin{split}
&\eqref{eq:t-crds}\ {\rm has\ a\ pulsating\ traveling\ wave\ solution}
\\ &
u(x,t)=\phi(n\cdot x-ct,t)\ 
{\rm with\ a\ direction}\ n\in\R^N\ {\rm and}\\ &
{\rm a\ speed}\ c\in\R\  {\rm satisfying}\ 
\phi(\mp\infty,t)
=p^{\pm}(t)\ {\rm and}\\
&\phi(z,t+T)=\phi(z,t),\ 
\phi_{z}(z,t)\ll(0,0,\cdots,0)
\ {\rm for}\ z\in\R,\ t\in\R.
\end{split}
\right.
\end{equation}
\begin{remark}
As shown in Bao and Wang \cite{BW}, 
the following time-periodic Lotka--Volterra competition-diffusion system 
\[
\left\{
\begin{array}{l}\vspace{6pt}
u_t=d_1(t)u_{xx}+u(r_1(t)-a_1(t)u-b_1(t)v),\\
v_t=d_2(t)v_{xx}+v(r_2(t)-a_2(t)u-b_2(t)v),
\end{array}
\right.
x\in\R,\ t\in\R
\]
satisfies $({\rm A1})$ under the assumption stated below 
(after the change of variables $(u,v)\mapsto(u,-v))$: 
\begin{itemize}
\item{} $d_i(t)$, $r_i(t)$, $a_i(t)$, $b_i(t)\in C^{\frac{\theta}{2}}(\R)$\ $(i=1,2; \theta\in(0,1))$ 
are $T$-periodic functions satisfying $d_i(t)>0$, $a_i(t)>0$, $b_i(t)>0$ for any $t\in[0,T]$, 
$\overline{r_i}=\frac{1}{T}\int_0^T r_i(t)d t>0$ and 
\[
\begin{split}
\overline{r_1}&<\underset{t}{\min}\Big(\frac{b_1(t)}{b_2(t)}\Big)\overline{r_2},\ \ 
\overline{r_2}<\underset{t}{\min}\Big(\frac{a_2(t)}{a_1(t)}\Big)\overline{r_1},\\ 
\overline{r_1}+\overline{r_2}&>
\underset{t}{\max}\Big(\frac{a_2(t)}{a_1(t)}\Big)\overline{r_1},\ \  
\overline{r_1}+\overline{r_2}>\underset{t}{\max}\Big(\frac{b_1(t)}{b_2(t)}\Big).
\end{split}
\]
\end{itemize}
\end{remark}
\begin{theorem}
[Liouville type theorem for $t$-periodic system]
\label{th:Ltype 2}
Assume $({\rm A1})$, \eqref{cd:ellipticity}, \eqref{cd:T-periodic}, \eqref{cd:t-bistable} 
and \eqref{cd:t-cs}. Let $u$ be a solution of \eqref{eq:t-crds} 
which satisfies that 
there are a unit vector $n$, some constants $c\in\R$, $a<b$ such that, 
for all $(x,t)\in\R^N\times\R$, 
\begin{equation}\label{eq:sandwich 1}
\phi(n\cdot x-ct-a,t)\preceq u(x,t)\preceq\phi(n\cdot x-ct-b,t),
\end{equation}
where $\phi$ is a function satisfying $(\rm T)$ with the speed $c$. 
Then there exists a function $\widetilde{\phi}$ satisfying $({\rm A1})$ 
such that $u(x,t)=\widetilde{\phi}(n\cdot x-ct,t)$.
If, in addition, assume \eqref{cd:t-irreducible}, then  
there exists $\theta_0\in(a,b)$ such that
\[
u(x,t)=\phi(n\cdot x-ct-\theta_0,t)\ \ 
{\rm for\ all}\ \ (x,t)\in\R^N\times\R.
\]
\end{theorem}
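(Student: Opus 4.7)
The plan is to adapt the sliding method of Berestycki--Hamel for scalar equations to time-periodic cooperative parabolic systems. Set $w_{\theta}(x,t) := \phi(n\cdot x - ct - \theta, t)$. The monotonicity $\phi_z \ll 0$ from $({\rm A1})$ ensures that $\theta \mapsto w_{\theta}$ is strictly componentwise increasing. The natural object to study is the critical shift
$$\theta^{*} := \inf\{\theta \in [a,b] : u \preceq w_{\theta} \text{ on } \R^N \times \R\}.$$
The sandwich hypothesis makes this set nonempty (it contains $b$), and continuity together with the $T$-periodicity of $\phi$ in the second slot give $u \preceq w_{\theta^{*}}$ globally.

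Next I would examine the nonnegative difference $Z := w_{\theta^{*}} - u \succeq 0$. A direct computation shows that $Z$ solves the linear cooperative system
$$Z_{l,t} = \sum_{i,j} D^{ij}_l(t) Z_{l, x_i x_j} + q_l(t)\cdot \nabla Z_l + \sum_k M^{lk}(x,t) Z_k,$$
with coupling matrix $M(x,t) = \int_0^1 DF(t, \tau w_{\theta^{*}} + (1-\tau) u)\,d\tau$, whose off-diagonal entries are nonnegative by \eqref{cd:t-cs}. Assuming \eqref{cd:t-irreducible}, $M$ is irreducible and the strong maximum principle for cooperative irreducible parabolic systems yields the dichotomy: either $Z \equiv 0$, in which case $u = w_{\theta^{*}}$ is the required pulsating wave with $\theta_0 = \theta^{*}$, or $Z(x,t) \gg 0$ for every $(x,t)$.

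The heart of the proof is to rule out the second alternative by sliding $\theta^{*}$ down by a small $\delta > 0$ and contradicting the infimum. I would split $\R^N\times\R$ into a slab $\Sigma_R := \{|n\cdot x - ct| \leq R\}$ and its complement. On $\Sigma_R$ a parabolic Harnack-type argument combined with the $T$-periodicity of the coefficients and the pulsating structure of $w_{\theta^{*}}$ produces a uniform lower bound $Z \succeq \kappa \mathbf{1}$ for some $\kappa > 0$. On the complementary region, where $\phi(n\cdot x-ct-\theta^{*},t)$ is close to $p^{\pm}(t)$, I would construct barrier functions built from the positive time-periodic Floquet eigenfunctions $\varphi^{\pm}(t)$ with eigenvalues $-\lambda_{\pm}$ supplied by \eqref{cd:t-bistable}, exploiting the exponential approach $\phi - p^{\pm} \sim e^{\mp \mu_{\pm} z}\varphi^{\pm}(t)$. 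Combining the slab bound with these asymptotic barriers gives $u \preceq w_{\theta^{*}-\delta}$ for some $\delta > 0$, contradicting the definition of $\theta^{*}$.

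For the first conclusion without irreducibility, I would apply the same sliding to translates $u^{h,e}(x,t) := u(x + he, t)$ for $e \perp n$; these satisfy the same sandwich because $w_\theta$ depends on $x$ only through $n\cdot x$. A symmetric sliding between $u$ and $u^{h,e}$ forces $u^{h,e} = u$ for all $h \in \R$ and $e \perp n$, so $u$ depends on $x$ only through $n\cdot x$. The pulsating form $u(x,t) = \widetilde\phi(n\cdot x - ct, t)$ with $\widetilde\phi(\cdot,t+T) = \widetilde\phi(\cdot, t)$ then follows from $T$-periodicity of the coefficients and the uniqueness principle applied to the one-dimensional sandwiched problem. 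The main obstacle I expect is the infinity-barrier construction in the sliding step: in the scalar Berestycki--Hamel setting one can use the single stable eigenvalue directly, whereas here the system nature, the vector-valued periodic eigenfunctions $\varphi^{\pm}(t)$, and the need for uniform positivity of $Z$ on $\Sigma_R$ despite $Z$ itself failing to be $T$-periodic make the estimates substantially more delicate.
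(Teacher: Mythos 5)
Your route is genuinely different from the paper's, and the difference is where the gap appears. You slide translates of the wave $w_\theta=\phi(n\cdot x-ct-\theta,t)$ against $u$ and set $\theta^*=\inf\{\theta: u\preceq w_\theta\}$. The paper instead slides translates of $u$ against $u$ itself: for $(\rho,\tau)\in\R^N\times T\Z$ with $n\cdot\rho=c\tau$ it sets $w^\sigma(x,t):=u(x+\rho+\sigma n,t+\tau)$ and shows $\sigma^*:=\inf\{\sigma:w^{\sigma'}\preceq u\ \forall\sigma'\ge\sigma\}\le 0$. That choice is not cosmetic; it is what makes the slab estimate close.

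The gap is in your slab step. You claim a uniform lower bound $Z=w_{\theta^*}-u\succeq\kappa\mathbf 1$ on $\Sigma_R$ from a ``Harnack-type argument.'' Because $\Sigma_R$ is unbounded in $x'$ and $t$ and $Z$ is not $T$-periodic, Harnack alone cannot give a uniform positive infimum; one must run a compactness/limiting argument. But after translating a minimizing sequence by symmetries of $w_{\theta^*}$ (shifts in $x'$ and by $(c\tau n,\tau)$ with $\tau\in T\Z$) and passing to the limit $\tilde u$, the strong maximum principle forces $\widetilde Z=w_{\theta^*}-\tilde u\equiv 0$, i.e.\ $\tilde u=w_{\theta^*}$. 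This tells you that a translated limit of $u$ equals the wave, but it does \emph{not} contradict $Z\gg 0$ for $u$ itself (indeed nothing so far excludes $u$ approaching $w_{\theta^*}$ only as one coordinate goes to infinity, so $\inf Z=0$ with $Z\gg0$ pointwise). In the paper's version the limit $U$ satisfies $U(x,t)=U(x+\rho+\sigma^*n+c\tau n,\,t+\tau)$ with $\sigma^*>0$, and iterating this identity drives $U$ to $p^+$ or $p^-$, contradicting the sandwich $\phi(\cdot-a)\preceq U\preceq\phi(\cdot-b)$. That iteration is exactly what your setup loses, since comparison against the fixed wave $w_{\theta^*}$ produces a self-consistent limit instead of an impossible one.

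Your treatment of the first conclusion (without irreducibility) is also too thin. Sliding $u^{h,e}(x,t)=u(x+he,t)$ with $e\perp n$ only yields $x$-planarity, i.e.\ that $u$ depends on $x$ through $n\cdot x$. The pulsating relation $u(x+cTn,t+T)=u(x,t)$ (equivalently $\widetilde\phi(\cdot,t+T)=\widetilde\phi(\cdot,t)$ in a comoving frame), and the monotonicity $\widetilde\phi_z\ll 0$, do not follow from planarity and $T$-periodic coefficients alone. The paper gets both simultaneously from the sliding with $(\rho,\tau)\in\R^N\times T\Z$, $n\cdot\rho=c\tau$: $\sigma=0$ gives the pulsating identity, $\sigma>0$ gives monotone decrease in $z$. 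I would restructure the proof along those lines: first obtain the pulsating structure of $u$ without irreducibility by self-sliding, then invoke a uniqueness lemma for pulsating waves (Lemma \ref{lm:t-uniqueness} in the paper, which does use \eqref{cd:t-irreducible}) to match $\widetilde\phi$ with a shift of $\phi$.
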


Next we state the theorem for spatially periodic systems: 
\begin{equation}\label{eq:x-crds}
\left\{
\begin{split}
u_{l,t}=&\sum_{i,j=1}^N D_l^{ij}(x)u_{l,x_ix_j}+q_l(x)\cdot\nabla u_l+f_l(x,u_1,\cdots,u_m)\\
&{\rm for }\ \ x\in\R^N,\ t\in\R\ \ (l=1,2,\cdots,m),
\end{split}
\right.
\end{equation}
where 
\[
\begin{split}
&D^{ij}_l(x)\ \ (i,j=1,2,\cdots,N),\ \ q_l(x)\in\R^N\ \ 
(l=1,2,\cdots,m), \\
&F(x,u_1,\cdots,u_m)=(f_1,f_2,\cdots,f_m)
\end{split}
\] 
are 
H$\ddot{\rm o}$lder continuous in $x$, smooth in $(u_1,u_2,\cdots,u_m)$ 
and $f_1,f_2,\cdots,f_m$ satisfy  
\begin{equation}\label{cd:x-cs}
f_{k,u_l}(x,u)=\partial_{u_l}f_k\geq0\ \ (k\not=l)\ \ 
{\rm for\ each}\ \ 
u\in(p^{-}(x),p^{+}(x)),\ 
x\in\R^N,
\end{equation} 
\begin{equation}\label{cd:x-irreducible}
DF(x,u)\ {\rm is\ an\ irreducible\ matrix}\ 
{\rm for\ each}\ u\in(p^-(x),p^+(x)),\ 
x\in\R^N,
\end{equation}
where 
\[
D F(x,p^{\pm})=
\left(
\begin{array}{cccc}
f_{1,u_1}(x,p^{\pm})&f_{1,u_2}(x,p^{\pm})&\cdots&f_{1,u_m}(x,p^{\pm})\\
f_{2,u_1}(x,p^{\pm})&f_{2,u_2}(x,p^{\pm})&\cdots&f_{2,u_m}(x,p^{\pm})\\
\vdots&\vdots&\ddots&\vdots\\
f_{m,u_1}(x,p^{\pm})&f_{m,u_2}(x,p^{\pm})&\cdots&f_{m,u_m}(x,p^{\pm})
\end{array}
\right).
\] 
We assume that there are $\alpha_2\geq\alpha_1>0$, $L_j>0$\ $(j=1,2,\cdots,N)$ such that 
\begin{equation}\label{cd:ellipticity 1}
\alpha_1|\xi|^2\leq\sum_{i,j=1}^N D^{ij}_l(x)\xi_i\xi_j\leq\alpha_2|\xi|^2\ \ 
{\rm for}\ \ x\in\R^N,\ \xi\in\R^N\ (l=1,2,\cdots,m),
\end{equation}
\begin{equation}\label{cd:L-periodic}
\begin{split}
&D^{ij}_l(x+k)=D^{ij}_l(x),\ q_l(x+k)=q_l(x),\ 
f_l(x+k,\cdot)=f_l(x,\cdot)\\ 
&{\rm for}\ \ x\in\R^N,\ k\in \mathbb{L}:=
L_1\Z\times L_2\Z\times\cdots\times L_N\Z\\ 
&(i,j=1,2,\cdots,N;\ l=1,2,\cdots,m),
\end{split}
\end{equation}
and there are two smooth functions 
\[
p^+(x)=(p^+_1(x),p^+_2(x),\cdots,p^+_m(x))\gg 
p^-(x)=(p^-_1(x),p^-_2(x),\cdots,p^-_m(x))
\] 
such that, for some constants $\lambda_{\pm}>0$ and vector valued functions
\[
\varphi^{\pm}={}^t (\varphi^{\pm}_1,\varphi^{\pm}_2,\cdots,\varphi^{\pm}_m)
\gg {}^t (0,0,\cdots,0),
\] 
\begin{equation}\label{cd:x-bistable}
\left\{
\begin{split}
\sum_{i,j=1}^N &D_l^{ij}(x)p^{\pm}_{l,x_ix_j}+q_l(x)\cdot\nabla p^{\pm}_l
+f_l(x,p^{\pm})=0,\\
\sum_{i,j=1}^N &D_l^{ij}(x)\varphi^{\pm}_{l,x_ix_j}+q_l(x)\cdot\nabla \varphi^{\pm}_l
+\sum_{j=1}^m f_{l,u_j}(x,p^{\pm})\varphi^{\pm}_j=-\lambda_{\pm}\varphi^{\pm}_l,\\
&{\rm for }\ \ x\in\R^N\ \ (l=1,2,\cdots,m),\\
p^{\pm}&(\cdot+k)=p^{\pm}(\cdot),\ \varphi^{\pm}(\cdot+k)=\varphi^{\pm}(\cdot)
\ \ {\rm for}\ \ k\in\mathbb{L}.
\end{split}
\right.
\end{equation}
We also assume: 
\begin{equation}\tag{A2}
\left\{
\begin{split}
&\eqref{eq:x-crds}\ {\rm has\ a\ solution}\ 
u(x,t)\ {\rm such\ that}\\
&{\rm for\ a\ constant}\ 
c\not=0\ {\rm and\ a\ unit\ vector}\ n\in\R^N,\\ 
&u(x-k,t)=u(x,t+k\cdot n/c),\ 
\lim_{k\in\mathbb{L},k\cdot n\rightarrow\pm\infty}u(x+k,t)
\rightarrow p^{\mp}(x),\\ 
&c u_{t}(x,t)\gg(0,0,\cdots,0)\ \ 
{\rm for}\ t\in\R,\ x\in\R^N\ {\rm and}\ k\in\mathbb{L}.
\end{split}
\right.
\end{equation}
\begin{remark}
In the case where $m=1$, namely, a scalar bistable equation with spatially periodic coefficients on 
$\R^N$
\begin{equation}\label{eq:x-bistable eq}
u_t-{\rm div}\,(A(x)\nabla u)=F(x,u),\ \ 
x\in \R^N,\ t\in\R,
\end{equation}
Ducrot \cite{D} shows that $({\rm A2})$ is satisfies if and only if there exists no stationary 
front in the direction $n$
under the following assumption: 
\begin{itemize}
\item{} $A:\mathbb{T}^N:=\R^N/\Z^N\rightarrow \mathcal{S}_N$ 
is a symmetric matrix valued function 
of the class $C^{1+\gamma}$ for some $\gamma\in(0,1)$ and satisfies \eqref{cd:ellipticity 1}. 
$F$ is of the class $C^{\gamma}$ in $x$ uniformly with respect to $u\in\R$, 
the partial derivative $F_u$ is continuous on $\mathbb{T}^N\times\R$. 
Moreover the equation
\[
\left\{
\begin{array}{l}\vspace{6pt}
u_t-{\rm div}\,(A(x)\nabla u)=F(x,u),\ \ 
x\in\mathbb{T}^N,\ t>0,\\
u(x,0)=u_0(x),\ \ x\in\mathbb{T}^N
\end{array}
\right.
\]
has two stable stationary states 
$\psi^-<\psi^+$ with $\psi^{\pm}\in C^2(\mathbb{T}^N)$ and there is no 
stable stationary state between $\psi^+$ and $\psi^-$.
\end{itemize}
Fang and Zhao \cite{FZ} give sufficient conditions for $({\rm A2})$ 
in a more abstract framework.  
\end{remark}
\begin{theorem}
[Liouville type theorem for $x$-periodic system]
\label{th:Ltype 3}
Assume $({\rm A2})$, \eqref{cd:ellipticity 1}, \eqref{cd:L-periodic}, \eqref{cd:x-bistable} 
and \eqref{cd:x-cs}. Let $u$ be a solution of \eqref{eq:x-crds} 
and $v$ be a solution as in $({\rm A2})$ with a speed $c\not=0$ and 
a unit vector $n\in\R^N$ which satisfy 
for some constants $a,b\in\R^N$ and 
for all $(x,t)\in\R^N\times\R$,
\begin{equation}\label{eq:sandwich 1}
v(x,t+a)\preceq u(x,t)\preceq v(x,t+b).
\end{equation}
Then $u$ satisfies $({\rm A2})$ with the speed $c\not=0$ and 
the unit vector $n\in\R^N$.
If, in addition, assume \eqref{cd:x-irreducible}, then  
there exists $\theta_0$ between $a$ and $b$ such that, for all $(x,t)\in\R^N\times\R$,
\[
u(x,t)=v(x,t+\theta_0).
\]
\end{theorem}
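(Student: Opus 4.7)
The plan is to adapt the sliding method of Berestycki--Hamel to the present setting of cooperative parabolic systems with $\mathbb{L}$-periodic coefficients. The main new subtlety compared to the homogeneous and purely time-periodic cases is that translations in $t$ are coupled to translations in $x$ through the pulsating identity $u(x-k,t)=u(x,t+k\cdot n/c)$. Without loss of generality I assume $c>0$, so that $v_{t}\gg 0$ and $s\mapsto v(\cdot,s)$ is strictly increasing. Using the sandwich hypothesis together with $({\rm A2})$ and a squeeze argument, one first verifies that $u(x,t)\to p^{\mp}(x)$ as $t\to\mp\infty$, locally uniformly in $x$.

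I then set
\[
\theta^{*}:=\inf\{\theta\in[a,b]\mid v(x,t+\theta)\succeq u(x,t)\text{ for all }(x,t)\in\R^{N}\times\R\},
\]
so that by continuity $a\le\theta^{*}\le b$ and $w:=v(\cdot,\cdot+\theta^{*})-u\succeq 0$. Writing $F(x,v(\cdot,\cdot+\theta^{*}))-F(x,u)=B(x,t)w$ with $B_{lj}(x,t)=\int_{0}^{1}f_{l,u_{j}}(x,su+(1-s)v(\cdot,\cdot+\theta^{*}))\,ds$ exhibits $w$ as a solution of a linear parabolic system which, by \eqref{cd:x-cs}, is cooperative. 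Under the irreducibility assumption \eqref{cd:x-irreducible} the matrix $B(x,t)$ is irreducible, so the strong maximum principle for cooperative parabolic systems yields the dichotomy: either $w\equiv 0$, in which case $u(x,t)=v(x,t+\theta^{*})$ and the theorem holds with $\theta_{0}=\theta^{*}$, or $w_{l}(x,t)>0$ for every $l$ and every $(x,t)$.

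The hard part is ruling out the strict-positivity alternative. One would like to slide $\theta^{*}$ slightly downward, contradicting its minimality; but the sandwich forces $w\to 0$ at the tails $t\to\pm\infty$, so a pure compactness argument is not enough. The remedy, in the spirit of \cite{BH1,BH2}, is to exploit the spectral data \eqref{cd:x-bistable}: the positive principal eigenvectors $\varphi^{\pm}(x)\gg 0$ with eigenvalues $-\lambda_{\pm}<0$ provide exponentially decaying super- and subsolutions of the linearization around $p^{\pm}$, which absorb the $O(\delta)$ deficit created by replacing $v(\cdot,\cdot+\theta^{*})$ by $v(\cdot,\cdot+\theta^{*}-\delta)$. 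Combined with uniform positivity of $w$ on compact sets --- obtained from $w\gg 0$ and the $\mathbb{L}$-periodicity of the coefficients, which reduces matters to a bounded fundamental cell --- this yields $v(\cdot,\cdot+\theta^{*}-\delta)\succeq u$ for some $\delta>0$, contradicting the definition of $\theta^{*}$.

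Finally, without \eqref{cd:x-irreducible} the dichotomy only holds component by component, so one cannot conclude $w\equiv 0$; instead, the statement to prove is that $u$ itself satisfies $({\rm A2})$. For this, note that by the $\mathbb{L}$-periodicity of the coefficients, for each $k\in\mathbb{L}$ the translated function $\tilde u_{k}(x,t):=u(x-k,t-k\cdot n/c)$ is still a solution of \eqref{eq:x-crds}, and the identity $v(x-k,s)=v(x,s+k\cdot n/c)$ shows that it satisfies the same sandwich as $u$. Applying the sliding procedure to $\tilde u_{k}$ yields the same critical shift $\theta^{*}$, and a second sliding argument comparing $u$ with $\tilde u_{k}$ directly, using the strict monotonicity $v_{t}\gg 0$ to pin down extremal shifts, forces $\tilde u_{k}=u$; this is the pulsating relation required in $({\rm A2})$. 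The asymptotic limits for $u$ have already been established, and the sign condition $cu_{t}\gg 0$ follows from a further sliding comparison between $u(\cdot,\cdot+h)$ and $u(\cdot,\cdot)$ for small $h>0$.
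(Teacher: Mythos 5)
Your toolkit --- the sliding method, the strong comparison principle for cooperative systems, and spectral barriers near $p^{\pm}$ built from the eigenvectors $\varphi^{\pm}$ --- is exactly the one the paper uses. However, the \emph{order} in which you organize the two conclusions creates a genuine gap in the irreducible case. You define $\theta^{*}$ by comparing $u$ directly with the pulsating wave $v$, and to slide $\theta^{*}$ down you need $w:=v(\cdot,\cdot+\theta^{*})-u$ to have a positive componentwise lower bound on the non-compact moving slab $\{(x,t):|n\cdot x-ct|\le 2C\}$. You claim this follows from ``$w\gg 0$ and the $\mathbb{L}$-periodicity of the coefficients, which reduces matters to a bounded fundamental cell,'' but periodicity of the coefficients does not make $w$ periodic: $v$ is pulsating, but at this point $u$ is \emph{not} yet known to be, so there is no identity of the form $w(x-k,t)=w(x,t+k\cdot n/c)$ letting you quotient the slab to a compact set. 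If you instead take lattice translates $u(\cdot+m_j,\cdot+t_j)$ along a minimizing sequence and pass to a limit, you get some \emph{other} eternal solution $U$ sandwiched between shifts of $v$, and the strong maximum principle only tells you $U\equiv v(\cdot,\cdot+\text{shift})$ for $t\le 0$ --- which is no contradiction, since nothing prevents translates of $u$ from converging to a translate of $v$.

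The paper avoids this by reversing the order: it first shows, without assuming irreducibility, that $u$ satisfies $({\rm A2})$ by sliding $u$ against its \emph{own} lattice translates $u(x+\rho,\,t+\tau-\sigma)$ with $\rho\in\mathbb{L}$ and $n\cdot\rho=c\tau$. Self-comparison is what makes the strong-maximum-principle step in the limit yield an \emph{iterable} functional identity for $U$ (namely $U_{l_0}(x,t)=U_{l_0}(x+\rho+\sigma^{*}n/\dots,\,t+\tau)$), which can be iterated against the sandwich and the asymptotics to produce a contradiction. Only after $u$ is known to be pulsating does the paper invoke Lemma~\ref{lm:x-uniqueness}, where the fundamental-cell reduction is legitimate because \emph{both} $u$ and $v$ satisfy the pulsating identity. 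Your treatment of the non-irreducible case contains precisely this self-sliding idea, so the fix is to promote it to the first step and carry out the comparison with $v$ afterward. Two smaller remarks: sliding alone only gives $u_t\succeq 0$; the paper gets $cu_t\gg 0$ by applying the scalar strong maximum principle to each $u_{l,t}$ (using cooperation to reduce to a scalar inequality) together with the asymptotic limits and pulsation, not by a further sliding step. And the paper's tail barriers are the constant perturbations $u\pm\ep\varphi^{\pm}$ on the half-slabs $S_{\pm}$ rather than exponentially decaying ones --- both work, but they are not the same construction.
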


\section{Proof of the main theorem}
\label{sc:proof of main}
As we mentioned before, the proof of Theorem \ref{th:validity} 
is based on a rescaling argument and the following two 
statements (Lemmas \ref{lm:sandwich} and \ref{lm:Ltype}). 

\begin{lemma}
[\cite{HKMN}]
\label{lm:sandwich}
Let $(u^{\ep},v^{\ep})$ be the solution of \eqref{eq:crds}. 
Under the assumptions in section 1, there are $C>0$, $A_i>0$ $(i=1,2,3)$ 
and $\ep_0>0$ such that, for any $\ep\in(0,\ep_0]$, 
$(x,t)\in\overline{\Omega}\times[Ct^{\ep},T]$, 
\[
\begin{split}
&U_0\Big(\frac{d(x,t)+\ep A_1}{\ep},x\Big)
-\ep A_2
\leq u^{\ep}(x,t)\leq 
U_0\Big(\frac{d(x,t)-\ep A_1}{\ep},x\Big)
+\ep A_2,\\
&V_0\Big(\frac{d(x,t)-\ep A_1}{\ep},x\Big)
-\ep A_3
\leq v^{\ep}(x,t)\leq 
V_0\Big(\frac{d(x,t)+\ep A_1}{\ep},x\Big)
+\ep A_3.
\end{split}
\]
\end{lemma}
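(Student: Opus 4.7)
The plan is to attack the lemma in two stages separated by the generation time $t = Ct^{\ep}$, and throughout to use the comparison principle for \eqref{eq:crds} via the order-preserving change of variables $(u,v) \mapsto (u, -v)$, which turns the competition-diffusion system into a cooperative one to which standard parabolic comparison applies.

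Stage one: \emph{generation of interface} on the short interval $[0, Ct^{\ep}]$ with $t^{\ep} = \ep^2|\log \ep|$. On this interval the diffusion terms are subdominant, and the solution at each frozen $x$ closely tracks the rescaled ODE flow of \eqref{eq:cods} in time $\tau = t/\ep$. By Assumption 5, $(u_0(x), v_0(x))$ lies at distance at least $A_0\,\mathrm{dist}(x, \Gamma_0)$ from the separatrix $S$; since the saddle $(u^*, v^*)$ has a positive unstable eigenvalue, points off $S$ are repelled exponentially in $\tau$. The $|\log \ep|$ factor in $t^{\ep}$ supplies exactly the exponential-growth budget needed to drive $(u^{\ep}, v^{\ep})$ into an $O(\ep)$ neighborhood of $p^+$ or $p^-$ wherever $\mathrm{dist}(x, \Gamma_0) \gtrsim \ep|\log \ep|$; in the thin $O(\ep)$ tube around $\Gamma_0$, a continuity argument using the smoothness of $u_0, v_0$ yields the correct transition-layer shape. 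This delivers the sandwich at the initial time $t = Ct^{\ep}$.

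Stage two: \emph{matched asymptotics and super-/subsolutions} on $[Ct^{\ep}, T]$. Write the inner ansatz
\[
u^{\ep}(x,t) = U_0(\zeta, x) + \ep U_1(\zeta, x, t) + O(\ep^2), \qquad \zeta = d(x,t)/\ep,
\]
and similarly for $v^{\ep}$. Substituting into \eqref{eq:crds}, the leading $\ep^{-1}$ order recovers the traveling-wave system \eqref{eq:tw} after the rescaling $z = K(x)\zeta$, while the $\ep^0$ order is a linear inhomogeneous system for $(U_1, V_1)$ whose Fredholm solvability against the kernel generated by $(U_0', V_0')$ yields exactly the motion law \eqref{eq:interface} for $\Gamma(t)$. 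With this expansion in hand, set
\[
\overline{u}^{\ep} = U_0\!\Bigl(\tfrac{d(x,t) - \ep A_1}{\ep}, x\Bigr) + \ep A_2, \qquad \underline{v}^{\ep} = V_0\!\Bigl(\tfrac{d(x,t) - \ep A_1}{\ep}, x\Bigr) - \ep A_3,
\]
together with the symmetric pair $(\underline{u}^{\ep}, \overline{v}^{\ep})$ obtained by reversing the signs of $A_1, A_2, A_3$. The shift by $\ep A_1$ inside the profile produces an $O(1)$ margin of safety inside the layer, while the additive $\ep A_{2,3}$ absorb the $O(\ep^2)$ formal residual near $p^{\pm}$, exploiting the spectral gap $\lambda_{\pm} > 0$ from \eqref{cd:bistable}. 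Assumption 4 ensures $\Gamma(t) \subset\subset \Omega$, so the Neumann condition is satisfied trivially by the exponential decay of $U_0 - p^{\pm}_1$ and $V_0 - p^{\pm}_2$ at $\pm\infty$. Comparison for the cooperative system $(u,-v)$ then propagates the sandwich from $t = Ct^{\ep}$ up to $t = T$.

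The main obstacle is the differential-inequality check for the super-/subsolution: one must unwind all derivatives of $U_0\bigl((d - \ep A_1)/\ep, x\bigr)$, combining the geometric quantities on the level sets of $d$ (mean curvature, normal derivatives of $k$, and the $K$-weighted correction in \eqref{eq:interface}) with the reaction terms, and show that the leading $O(1)$ discrepancy inside the layer has the sign prescribed by the shift $A_1$ while the $O(\ep)$ discrepancy away from the layer is absorbed by $A_2, A_3$. This requires splitting into three regimes (a far field $|\zeta| \gg 1$, an intermediate zone, and a core $|\zeta| \lesssim 1$), with the core estimate depending critically on the nondegeneracy of the wave profile, namely $U_0' < 0 < V_0'$, and the far-field estimate relying on exponential convergence of $(U_0, V_0)$ to $p^{\pm}$ and the linear stability \eqref{cd:bistable}.
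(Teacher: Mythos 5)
Your proposal reconstructs from scratch the two-stage argument (generation of interface, then propagation via super/subsolutions modeled on the matched asymptotic expansion), whereas the paper's proof of this lemma is essentially a citation: the lemma is explicitly attributed to \cite{HKMN}, and the proof simply points to the lower and upper solutions $(\hat u^{\pm}_{\ep},\hat v^{\pm}_{\ep})$ already constructed in Definition~6.1 of \cite{HKMN}, together with estimate~(7.1) there, and observes that the stated inequalities are read off directly from that construction. So you are not taking a genuinely different route --- you are expanding the very argument that the paper takes as given. Your outline of that construction is structurally correct and faithful to \cite{HKMN}: the change of variables $(u,v)\mapsto(u,-v)$ to obtain an order-preserving system, the use of Assumption~5 to quantify the initial distance to the separatrix $S$, the exponential repulsion from $S$ to drive the solution near $p^{\pm}$ after time $t^\ep=\ep^2|\log\ep|$, the profile shifts $\pm\ep A_1$ and the additive corrections $\pm\ep A_{2,3}$ absorbing residuals via the spectral gap $\lambda_{\pm}>0$, and the trichotomy of regimes (core, intermediate, far-field) in the differential-inequality check. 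The trade-off is the obvious one: the paper's citation is short and rigorous because the hard work has been done elsewhere; your sketch is self-contained in outline but defers the main technical burden --- the actual verification of the super/subsolution inequalities against all the geometric terms of \eqref{eq:interface} --- and making that precise is the bulk of \cite{HKMN}.

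Two smaller remarks. First, in Stage one the correct ODE time rescaling is $\tau=t/\ep^2$, not $\tau=t/\ep$: dividing \eqref{eq:crds} by $\ep$ gives $u_t = D_1\nabla\cdot(k\nabla u)+ (h/\ep^2)f(u,v)$, so $\tau=t/\ep^2$ makes the reaction $O(1)$ and the diffusion $O(\ep^2)$, and $t^\ep=\ep^2|\log\ep|$ then corresponds to $\tau=|\log\ep|$, exactly the budget needed for exponential growth to promote an $O(\ep)$ offset from $S$ to $O(1)$. Second, your description of Stage two as being ``with this expansion in hand'' glosses over the fact that the formal expansion is only a guide: the super/subsolutions are explicit functions built from $U_0,V_0$, the signed distance $d$, and a handful of constants, and the real proof consists in plugging them into the PDE and checking sign --- which is precisely what you flag as ``the main obstacle'' but do not carry out. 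For a lemma the paper itself imports from \cite{HKMN}, citing is the right move; your sketch is a correct but incomplete reconstruction of what is being cited.
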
 

In the next lemma, we consider the following system.
\begin{equation}\label{eq:crds 1}
\left\{
\begin{array}{ll}\vspace{6pt}
u_{t}=D_1\Delta u
+(R_1-a_1u-b_1 v)u,&x\in\R^N,\ t\in\R,\\
\vspace{6pt}
v_{t}=D_2 \Delta v 
+ (R_2-a_2 u-b_2v)v,&x\in\R^N,\ t\in\R,
\end{array}
\right.
\end{equation}
where $D_i$, $R_i$, $a_i$, $b_i$ $(i=1,2)$ are positive constants.
\begin{lemma}
[Liouville type theorem]
\label{lm:Ltype}
Suppose that {\rm Assumption $1$} and \eqref{cd:sc} hold.  
Let $u(x,t)$ $(x\in\R^N,\ t\in\R)$ be a solution of \eqref{eq:crds 1} satisfying, 
for all $(x,t)\in\R^N\times\R$, 
\begin{equation}\label{cd:sandwich}
\left\{
\begin{split}
\phi(n\cdot x-a)\leq u(x,t)\leq \phi(n\cdot x -b),\\
\psi(n\cdot x-b)\leq v(x,t)\leq \psi(n\cdot x -a),
\end{split}
\right.
\end{equation}
where $n$ is a unit vector, $a<b$ are some constants and 
$(\phi,\psi)$ is a solution of \eqref{eq:tw}. 
Then there is a $\theta_0\in(a,b)$ such that,  
for all $(x,t)\in\R^N\times\R$, 
\[
u(x,t)=\phi(n\cdot x - \theta_0),\ \ 
v(x,t)=\psi(n\cdot x - \theta_0).
\]
\end{lemma}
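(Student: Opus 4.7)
The plan is to deduce Lemma \ref{lm:Ltype} as a direct application of Theorem \ref{th:Ltype}, after verifying that the Lotka--Volterra system \eqref{eq:crds 1} together with the stationary wave $(\phi,\psi)$ from Assumption~1 fits the hypotheses of that theorem with wave speed $c=0$.

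First I would check the competition-type sign condition \eqref{cd:irreducible}. With $f_1(u,v)=(R_1-a_1u-b_1v)u$ and $f_2(u,v)=(R_2-a_2u-b_2v)v$, one computes $\partial_v f_1=-b_1u<0$ and $\partial_u f_2=-a_2v<0$ throughout $(0,R_1)\times(0,R_2)$, which is exactly the rectangle $(p_1^-,p_1^+)\times(p_2^+,p_2^-)$ determined by $p^+=(R_1,0)$ and $p^-=(0,R_2)$.

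Next I would verify the linear stability \eqref{cd:bistable} at $p^\pm$. (The normalization $a_1=b_2=1$ is implicit in the paper's statement of $p^\pm$.) A direct computation yields
\[
DF(R_1,0)=\begin{pmatrix}-R_1 & -b_1R_1\\ 0 & R_2-a_2R_1\end{pmatrix},
\]
whose eigenvalues are $-R_1$ and $R_2-a_2R_1$; the latter is negative precisely because the first inequality in \eqref{cd:sc} gives $a_2R_1>R_2$. The symmetric calculation at $p^-=(0,R_2)$ uses the inequality $R_1/R_2<b_1/b_2$. In each case an eigenvector $\varphi^\pm$ with the competition-type sign pattern $\varphi_2^\pm<0<\varphi_1^\pm$ can be read off from the explicit form of the matrix, so \eqref{cd:bistable} holds.

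Third, Assumption~1 combined with the monotonicity $U'<0$, $V'>0$ recorded in Remark \ref{rm:S and tw} provides, for any unit vector $n\in\R^N$, the stationary planar wave $(x,t)\mapsto(\phi(n\cdot x),\psi(n\cdot x))$, so assumption $({\rm A})$ of Theorem \ref{th:Ltype} holds with speed $c=0$. The hypothesis \eqref{cd:sandwich} then coincides with the sandwich condition of Theorem \ref{th:Ltype} at $c=0$, and the conclusion of that theorem is exactly the identity claimed in Lemma \ref{lm:Ltype}.

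The only substantive work therefore takes place inside Theorem \ref{th:Ltype} itself, where the main difficulty---the sliding method adapted to a two-component wave---was already handled. I expect that if one instead tried to prove Lemma \ref{lm:Ltype} from scratch, the principal obstacle would be propagating the ordering near $z=\pm\infty$, where both components degenerate: one would need to build super- and subsolutions of the form $(\phi(n\cdot x-\theta),\psi(n\cdot x-\theta))\pm\delta e^{-\lambda t}\varphi^\pm$, using the stable eigenvectors from \eqref{cd:bistable}, and then apply a strong maximum principle argument to the irreducible cooperative system obtained from \eqref{eq:crds 1} via $(u,v)\mapsto(u,-v)$.
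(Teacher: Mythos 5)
Your plan---reduce Lemma \ref{lm:Ltype} to Theorem \ref{th:Ltype} by verifying \eqref{cd:irreducible}, \eqref{cd:bistable} and $({\rm A})$---is exactly the paper's own one-sentence derivation, and your checks of \eqref{cd:irreducible} and $({\rm A})$ are correct and a useful elaboration of what the paper leaves implicit.

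The verification of \eqref{cd:bistable}, however, has a gap that is already visible in the matrix you wrote down. The eigenvector of $DF(R_1,0)$ for the eigenvalue $-R_1$ is $(1,0)$, whose second component vanishes rather than being strictly negative; and the eigenvector for the other eigenvalue $R_2-a_2R_1$ satisfies $\varphi_2^{+}=\frac{a_2R_1-R_1-R_2}{b_1R_1}\,\varphi_1^{+}$, which has the \emph{same} sign as $\varphi_1^{+}$ whenever $a_2R_1>R_1+R_2$. This regime is compatible with \eqref{cd:sc}: with $R_1=R_2=a_1=b_2=1$, $a_2=b_1=3$ one gets $DF(1,0)=\left(\begin{smallmatrix}-1&-3\\0&-2\end{smallmatrix}\right)$, for which neither eigenvector has the pattern $\varphi_2^{+}<0<\varphi_1^{+}$, so \eqref{cd:bistable} as literally stated fails even though \eqref{cd:sc} holds. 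The paper's one-line derivation shares this defect. The cure is to notice that the proof of Theorem \ref{th:Ltype 1} never uses the eigenvector identity itself, only the strict subinvariance needed for \eqref{eq:near p^+}--\eqref{eq:near p^-}: one needs a vector $\varphi^{+}$ with $\varphi_1^{+}>0>\varphi_2^{+}$ such that $DF(p^{+})\varphi^{+}$ has first component strictly negative and second component strictly positive (equivalently, after the change of variables $(u,v)\mapsto(u,-v)$, a $\psi\gg0$ with $B\psi\ll0$ for the cooperative Jacobian $B$). Such a $\psi$ always exists when $B$ is a stable matrix with nonnegative off-diagonal entries, even a reducible one; in the example above $\psi=(1,\ep)$ works for any small $\ep>0$. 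So the conclusion of Lemma \ref{lm:Ltype} is still correct, but the reduction really requires replacing the eigenvector condition in \eqref{cd:bistable} by this weaker positivity condition, and your sentence asserting that the required eigenvector ``can be read off'' should be adjusted accordingly.
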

This lemma is a special case of Theorem \ref{th:Ltype}. 
In fact, \eqref{cd:irreducible} is obviously satisfied. 
\eqref{cd:sc} and Assumption 1 imply \eqref{cd:bistable} and (A), respectively.  

\begin{remark}\label{rm:sandwich}
From Lemma \ref{lm:sandwich}, the following holds. (See Theorem 2 in \cite{HKMN}.)
\begin{theorem}\label{th:loc interface}
Let $C>0$, $\ep_0>0$ be constants in Lemma \ref{lm:sandwich}. 
Then there is a constant $\widetilde{C}>0$ 
such that 
\[
d_{\mathcal{H}}(\Gamma^{\ep}(t),\Gamma(t))< \widetilde{C}\ep
\ \ {\rm for}\ \ t\in[Ct^{\ep},T],\ \ep\in(0,\ep_0],
\]
where $d_{\mathcal H}$ denotes the Hausdorff distance between compact sets.
\end{theorem}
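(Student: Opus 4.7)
The plan is to promote the pointwise sandwich bound in Lemma \ref{lm:sandwich} to a geometric bound on the level set $\Gamma^{\ep}(t)$ via the defining function $H$ of the separatrix $S$ from Remark \ref{rm:S and tw}. The key object will be the composite $\Phi(z):=H(\phi_{0}(z),\psi_{0}(z))$, which by \eqref{eq:non degenerateness} is smooth and strictly increasing, satisfies $\Phi(0)=0$ and $\Phi'(0)>0$, and has limits $\Phi(\mp\infty)=H(p^{\pm})$ of opposite signs. Since $\Gamma(t)$ is compactly contained in $\Omega$ for every $t\in[0,T]$ (Assumption 4), there is a fixed neighborhood of $\bigcup_{0\leq t\leq T}\Gamma(t)$ on which $K(x)\geq K_{\min}>0$.

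\emph{First inclusion} $\Gamma^{\ep}(t)\subset\{|d(\cdot,t)|\leq\widetilde{C}\ep\}$: fix $x\in\Gamma^{\ep}(t)$, so $H(u^{\ep}(x,t),v^{\ep}(x,t))=0$. Using $H_{u}<0$ and $H_{v}>0$, the sandwich bounds of Lemma \ref{lm:sandwich}, and a first-order Taylor expansion of $H$ (which contributes only $O(\ep)$ because the sandwich keeps $(u^{\ep},v^{\ep})$ in a fixed compact rectangle of the first quadrant), one arrives at
\[
\Phi\!\left(K(x)\bigl(\tfrac{d(x,t)}{\ep}-A_{1}\bigr)\right)-O(\ep)\ \leq\ 0\ \leq\ \Phi\!\left(K(x)\bigl(\tfrac{d(x,t)}{\ep}+A_{1}\bigr)\right)+O(\ep).
\]
Lipschitz invertibility of $\Phi$ near $0$ combined with $K(x)\geq K_{\min}$ then pins $d(x,t)/\ep$ into $[-A_{1}-O(\ep),A_{1}+O(\ep)]$, hence $|d(x,t)|\leq (A_{1}+1)\ep=:\widetilde{C}\ep$ for $\ep$ small.

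\emph{Second inclusion} $\Gamma(t)\subset\{\mathrm{dist}(\cdot,\Gamma^{\ep}(t))\leq\widetilde{C}\ep\}$: for $y\in\Gamma(t)$ with outward unit normal $n$, the segment $\gamma(s):=y+sn$ satisfies $d(\gamma(s),t)=s$ on $[-\widetilde{C}\ep,\widetilde{C}\ep]$ for $\ep$ small. After enlarging $\widetilde{C}$ so that $\pm\Phi(\pm K_{\min}(\widetilde{C}-A_{1}))$ each exceed a fixed $c_{0}>0$, the same two-sided bound displayed above shows $H(u^{\ep},v^{\ep})\leq-c_{0}/2$ at $s=-\widetilde{C}\ep$ and $\geq c_{0}/2$ at $s=\widetilde{C}\ep$, uniformly in small $\ep$. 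The scalar function $s\mapsto H(u^{\ep}(\gamma(s),t),v^{\ep}(\gamma(s),t))$ is continuous, so the intermediate value theorem produces $s^{*}\in(-\widetilde{C}\ep,\widetilde{C}\ep)$ with $\gamma(s^{*})\in\Gamma^{\ep}(t)$. Combining the two inclusions gives $d_{\mathcal H}(\Gamma^{\ep}(t),\Gamma(t))<\widetilde{C}\ep$ uniformly for $(t,\ep)\in[Ct^{\ep},T]\times(0,\ep_{0}]$.

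The only delicate step is the Taylor expansion that absorbs the additive $\ep A_{2},\ep A_{3}$ errors of Lemma \ref{lm:sandwich} into an $O(\ep)$ correction in $\Phi$; this is immediate from smoothness of $H$ on a fixed compact subset of the first quadrant (which holds since the sandwich confines $(u^{\ep},v^{\ep})$) together with the uniform bound $K(x)\geq K_{\min}>0$ near $\Gamma(t)$. No boundary issues arise because $\Gamma(t)$ stays strictly inside $\Omega$ throughout $[0,T]$ by Assumption 4.
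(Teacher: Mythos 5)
The paper itself does not prove this theorem; it simply cites Theorem~2 of \cite{HKMN} inside Remark~\ref{rm:sandwich}, so your proposal must stand on its own. Your overall strategy --- promoting the pointwise sandwich bound of Lemma~\ref{lm:sandwich} to a Hausdorff estimate via the separatrix-defining function $H$, with the intermediate value theorem supplying the second inclusion --- is sound.

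There is, however, a genuine gap in the Taylor expansion step used for the first inclusion. You claim the $\pm\ep A_2$, $\pm\ep A_3$ offsets contribute only a uniform $O(\ep)$ error to $H$ ``because the sandwich keeps $(u^{\ep},v^{\ep})$ in a fixed compact rectangle of the first quadrant.'' But the expansion is centered at the base point $\bigl(U_0(\cdot,x),V_0(\cdot,x)\bigr)=\bigl(\phi_0(K(x)\cdot),\psi_0(K(x)\cdot)\bigr)$, and when $|d(x,t)|/\ep$ is large this base point approaches the corners $(0,R_2)$ or $(R_1,0)$ of the closed quadrant. Remark~\ref{rm:S and tw} guarantees only $H\in C(\overline{\R_+}\times\overline{\R_+})\cap C^{\infty}(\R_+\times\R_+)$, and $\nabla H$ need not be bounded up to the boundary: taking $H(u,v)=v-\zeta(u)$, one has $H_u=-\zeta'(u)$, while the separatrix asymptotics near the unstable node give $\zeta(u)\sim c\,u^{R_2/R_1}$ as $u\to 0^+$, so $\zeta'(0^+)=+\infty$ whenever $R_2<R_1$. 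Thus the error constant in your expansion is not uniform in the base point, and the step is circular: you need to know a priori that $|d|/\ep$ is bounded (so the base point stays in a fixed compact subset of the open quadrant) in order to justify the very estimate you are using to prove $|d|/\ep$ is bounded.

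The fix is minor. Either (i) replace the Taylor expansion by uniform continuity of $H$ on compact subsets of $\overline{\R_+}\times\overline{\R_+}$, which gives a uniform $o(1)$ (not $O(\ep)$) discrepancy between $H(u^{\pm},v^{\mp})$ and $\Phi$ --- still sufficient, since you only need the discrepancy eventually below a fixed $c_0/2$; or (ii) avoid Taylor entirely and argue by monotonicity: if $d(x,t)\geq\widetilde{C}\ep$ then by the sandwich and $H_u<0<H_v$,
\[
0=H(u^{\ep},v^{\ep})\ \geq\ H\bigl(\phi_0(K_{\min}(\widetilde{C}-A_1))+\ep A_2,\ \psi_0(K_{\min}(\widetilde{C}-A_1))-\ep A_3\bigr)\ \longrightarrow\ \Phi\bigl(K_{\min}(\widetilde{C}-A_1)\bigr)>0
\]
as $\ep\to0$, a contradiction for $\ep$ small; and symmetrically for $d\leq-\widetilde{C}\ep$. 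Note that the second inclusion via IVT is unaffected: there the base points are evaluated at the fixed finite arguments $\pm K(\cdot)(\widetilde{C}-A_1)$, which remain in a compact subset of the open quadrant, so either the Taylor argument or the monotonicity argument applies cleanly.
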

\end{remark}

\subsection{Proof of statement (ii)}

\begin{proof}[Poof of $(ii)$ of Theorem \ref{th:validity}] 
Fix $\mu>1$, $T_1\in(T,T_{\max})$ and 
let $C$ be the constant in Lemma \ref{lm:sandwich}. 
To obtain a contradiction, suppose that $(ii)$ does not hold. 
Then there exist $\eta>0$, $\ep_j>0$, $(x_j,t_j)\in\overline{\Omega}\times[\mu Ct^{\ep_j},T]$ 
such that $\ep_j\searrow0$ as $j\rightarrow\infty$ and for all $j\in\N$, 
\[
\left\{
\begin{split}
&\Big|u^{\ep_j}(x_j,t_j)-
U_0\Big(\frac{d^{\ep_j}(x_j,t_j)}{\ep_j},x_j\Big)
\Big|\geq\eta\ \ {\rm or}\\ 
&\Big|v^{\ep_j}(x_j,t_j)-
V_0\Big(\frac{d^{\ep_j}(x_j,t_j)}{\ep_j},x_j\Big)
\Big|\geq\eta.
\end{split}
\right.
\]
By extracting a subsequence, it holds that 
\begin{equation}
\label{eq:contradiction}
\Big|u^{\ep_j}(x_j,t_j)-
U_0\Big(\frac{d^{\ep_j}(x_j,t_j)}{\ep_j},x_j\Big)
\Big|\geq\eta\ \ {\rm for\ all}\ \ j\in\N \ \ {\rm or}\ \ 
\end{equation}
\begin{equation}
\label{eq:contradiction v}
\Big|v^{\ep_j}(x_j,t_j)-
V_0\Big(\frac{d^{\ep_j}(x_j,t_j)}{\ep_j},x_j\Big)
\Big|\geq\eta\ \ {\rm for\ all}\ \ j\in\N.
\end{equation}
Since it is irrelevant in the later argument whether \eqref{eq:contradiction} holds 
or \eqref{eq:contradiction v} holds, 
we may assume that \eqref{eq:contradiction} holds. 
By the same reason, we may assume
\begin{equation}\label{lm:interior}
\begin{split}
&x_k\in\Omega^{\ep_j}_{out}(t_j)\cup\Gamma^{\ep_j}(t_j)\ \ {\rm for\ all}\ \ j\in\N,\ \ 
{\rm that\ is,}\\ 
&(u^{\ep_j}(x_j,t_j),v^{\ep_j}(x_j,t_j))\in\Delta_2\cup S\ \ 
{\rm for\ all}\ \ j\in\N.
\end{split}
\end{equation}
Then it holds that
\begin{equation}\label{eq:loc interface}
{\rm dist}(x_j,\Gamma^{\ep_j}(t_j))= O(\ep_j),\ \ 
{\rm dist}(x_j,\Gamma(t_j))= O(\ep_j)\ \ 
{\rm as}\ \ j\rightarrow\infty.
\end{equation}
In fact, if this is not true, then, by Theorem \ref{th:loc interface} and 
extracting a subsequence, 
it holds that 
\[
\begin{split}
&\Big|\frac{d^{\ep_j}(x_j,t_j)}{\ep_j}\Big|
=\frac{{\rm dist}(x_j,\Gamma^{\ep_j}(t_j))}{\ep_j}\rightarrow\infty
\ \ {\rm as}\ \ j\rightarrow\infty,\\
&\Big|\frac{d(x_j,t_j)}{\ep_j}\Big|
=\frac{{\rm dist}(x_j,\Gamma(t_j))}{\ep_j}\rightarrow\infty
\ \ {\rm as}\ \ j\rightarrow\infty,\\
&d^{\ep_j}(x_j,t_j)d(x_j,t_j)>0\ \ {\rm for\ all}\ \ j\in\N.
\end{split}
\]
By Lemma \ref{lm:sandwich}, 
\[
\begin{split}
0=&\lim_{j\rightarrow\infty}
\Big\{
U_0\Big(\frac{d(x_j,t_j)+\ep_j A_1}{\ep_j},x_j\Big)
-\ep_j A_2
-U_0\Big(\frac{d^{\ep_j}(x_j,t_j)}{\ep_j},x_j\Big)
\Big\}\\
\leq&\lim_{j\rightarrow\infty}
\Big\{
u^{\ep_j}(x_j,t_j)
-U_0\Big(\frac{d^{\ep_j}(x_j,t_j)}{\ep_j},x_j\Big)
\Big\}\\
\leq&\lim_{j\rightarrow\infty}
\Big\{
U_0\Big(\frac{d(x_j,t_j)-\ep_j A_1}{\ep_j},x_j\Big)
+\ep_j A_2
-U_0\Big(\frac{d^{\ep_j}(x_j,t_j)}{\ep_j},x_j\Big)
\Big\}=0
\end{split}
\]
and this contradicts \eqref{eq:contradiction}. Hence \eqref{eq:loc interface} holds. 

Let $y_j\in\Gamma^{\ep_j}(t_j)$ be a point such that 
$|y_j-x_j|=d^{\ep_j}(x_j,t_j)$ and let $p_j=p(x_j,t_j)$ 
be the image of $x_j$ of the projection onto $\Gamma(t_j)$ for each $j\in\N$.
Then it is easy to see that the following hold.
\begin{eqnarray}
\label{eq:1}
&&(u^{\ep_j}(y_j,t_j),v^{\ep_j}(y_j,t_j))\in S,\\
\label{eq:2}
&&d^{\ep_j}(x_j,t_j)=|x_j-y_j|,\\
\label{eq:3}
&&(u^{\ep_j}(x,t_j),v^{\ep_j}(x,t_j))
\in\Delta_2\cup S\ \ {\rm if}\ \ |x-x_j|<|y_j-x_j|,\\
\label{eq:4}
&&x_j-p_j\perp \Gamma(t_j)\ \ {\rm at}\ \ p_j\in\Gamma(t_j),\\
\label{eq:5}
&&|x_j-p_j|=O(\ep_j),\ |x_j-y_j|=O(\ep_j)\ \ 
{\rm as}\ \ j\rightarrow\infty.
\end{eqnarray}
We now rescale the solution $(u^{\ep_j},v^{\ep_j})$ around 
$(x_j,t_j)$ and define 
\begin{equation}\label{eq:rescale}
\left\{
\begin{split}
&w_1^j(z,\tau):=u^{\ep_j}(p_j+\ep_j\mathcal{R}_j z,t_j+\ep_j^2\tau),\\
&w_2^j(z,\tau):=v^{\ep_j}(p_j+\ep_j\mathcal{R}_j z,t_j+\ep_j^2\tau),
\end{split}
\right.
\end{equation}
where $\mathcal{R}_j$ is a matrix in $SO(\R^N)$ that rotates $z_N$ axis 
onto the outward normal at $p_j\in\Gamma(t_j)$. 
Since $\underset{0\leq t\leq T_1}{\cup}\Gamma(t)$ is separated from $\partial\Omega$ 
by some positive distance, there is a $C_0>0$ such that 
$(w_1^j,w_2^j)$ is defined at least on the box 
\[
B_j:=\Big\{(z,\tau)\in\R^N\times\R\mid|z|<\frac{C_0}{\ep_j},\ 
-(\mu-1)C|\log\epsilon_j|\leq t\leq \frac{T_1-T}{\ep_j^2}\Big\}.
\]
Since $(u^{\ep},v^{\ep})$ satisfies \eqref{eq:crds}, we can see that 
$(w^k_1,w^k_2)$ satisfies 
\[
\left\{
\begin{split}
&w_{1,\tau}^j=D_1\tilde{k}_j(z)\Delta w_1^j
+\ep_j q_1^j(z)\cdot\nabla w_1^j+\tilde{h}_j(z)f(w_1^j,w_2^j),\\
&w_{2,\tau}^j=D_2\tilde{k}_j(z)\Delta w_2^j
+\ep_jq_2^j(z)\cdot\nabla w_2^j+\tilde{h}_j(z)g(w_1^j,w_2^j)
\end{split}
\right.\  \ {\rm in}\ \ B_j,
\]
where 
\[
\begin{split}
&\tilde{k}_j(z)=k(p_j+\ep_j\mathcal{R}_j z),\ 
\tilde{h}_j(z)=h(p_j+\ep_j\mathcal{R}_j z),\\
&q_i^j(z)=D_i\nabla k(p_j+\ep_j\mathcal{R}_j z)\ \ (i=1,2).
\end{split}
\]
Thus from \eqref{eq:5}, Lemma \ref{lm:sandwich}, compactness of $\overline{\Omega}$ 
and standard parabolic estimates, 
up to extraction of subsequence, 
$x_j$ and $p_j$ converge to a point $x_*\in\Omega$, 
$(w_1^j,w_2^j)$ converges to $(w_1,w_2)$ locally uniformly in
 $\R^N\times\R=\underset{j\geq1}{\cup}B_j$ as $j\rightarrow\infty$
and 
\[
(w_1(z,\tau),w_2(z,\tau))\ \ {\rm and }\ \ (U_0(z_N,x_*),V_0(z_N,x_*))
\] 
satisfy 
\[
\left\{
\begin{split}
&U_0(z_N+A_1,x_*)
\leq w_1(z,\tau)\leq 
U_0(z_N-A_1,x_*),\\
&V_0(z_N-A_1,x_*)
\leq w_2(z,\tau)\leq 
V_0(z_N+A_1,x_*)
\end{split}
\right.
\]
and the following system
\[
\left\{
\begin{split}
&u_{1,\tau}=D_1k\Delta u_1+h f(u_1,u_2),\\
&u_{2,\tau}=D_2k\Delta u_2+h g(u_1,u_2)
\end{split}
\right.\ \  {\rm in}\ \ \R^N\times\R,
\]
where $k=k(x_*)$, $h=h(x_*)$.
By Lemma \ref{lm:Ltype}, 
there is a $\theta_0\in\R$ such that, for all 
$(z,\tau)\in\R^N\times\R$, 
\begin{equation}\label{eq:w=tw}
(w_1(z,\tau),w_2(z,\tau))=(U_0(z_N-\theta_0,x_*),V_0(z_N-\theta_0,x_*)).
\end{equation}
Define 
\[
z_j:=\frac{1}{\ep_j}\mathcal{R}_j^{-1}(x_j-p_j),\ \ 
\tilde{z}_j:=\frac{1}{\ep_j}\mathcal{R}_j^{-1}(y_j-p_j)\ \ 
(j\in\N).
\]
From \eqref{eq:5}, up to extraction of subsequence, they converge:
\[
\lim_{j\rightarrow\infty}z_j=z_*=(0,\cdots,0,z_{*,N}),\ \ 
\lim_{j\rightarrow\infty}\tilde{z}_j=\tilde{z}_*
=(\tilde{z}_{*,1},\cdots,\tilde{z}_{*,N}).
\]
By \eqref{eq:1} and \eqref{eq:3}, 
\begin{equation}\label{eq:6}
\begin{split}
&(w_1(\tilde{z}_*,0),w_2(\tilde{z}_*,0))\in S,\\
&(w_1(z,0),w_2(z,0))\in \Delta_2\cup S
\ \ {\rm for}\ \ z\ \ {\rm with}\ \ |z-z_*|\leq|\tilde{z}_*-z_*|.
\end{split}
\end{equation}
By \eqref{eq:w=tw}, 
\[
\{z\mid (w_1(z,0),w_2(z,0))\in S\}=\{z\mid z_N=\theta_0\}\,(=:H).
\]
By \eqref{eq:w=tw} and \eqref{eq:6}, 
\[
z_*=\tilde{z}_*\ \ {\rm or}\  \ 
\partial B_{|\tilde{z}_*-z_*|}(z_*)\ \ {\rm and}
\ \ H\ \ {\rm intersect\ at}\ \ \tilde{z}_*.
\]
Thus 
\[
\tilde{z}_*=(0,\cdots,0,\tilde{z}_{*,N})=(0,\cdots,0,\theta_0).
\]
By $(w_1(z_*,0),w_2(z_*,0))\in\Delta_2\cup S$ and \eqref{eq:w=tw}, 
\[
z_{*,N}\geq\theta_0.
\] 
On the other hand, 
$d^{\ep_j}(x_j,t_j)=|y_j-x_j|$ implies 
\[
\frac{d^{\ep_j}(x_j,t_j)}{\ep_j}=|z_j-\tilde{z}_j|\rightarrow|z_*-\tilde{z}_*|=z_{*,N}-\theta_0
\ \ {\rm as}\ \ j\rightarrow\infty.
\]
Hence, by \eqref{eq:contradiction} and \eqref{eq:w=tw}, 
\[
\begin{split}
0&=|w_1(z_*,0)-U_0(z_{*,N}-\theta_0,x_*)|\\
&=\lim_{j\rightarrow\infty}
\Big|u^{\ep_j}(x_j,t_j)
-U_0\Big(\frac{d^{\ep_j}(x_j,t_j)}{\ep_j},x_j\Big)\Big|
\geq\eta>0.
\end{split}
\]
This contradiction proves that (ii) of Theorem \ref{th:validity} holds. 
\end{proof}
\subsection{Proof of statements (i) and (iii)}

\begin{proof}[Proof of (i), (iii) of Theorem \ref{th:validity}]
First we prove that there is a constant $c_1>0$ such that 
for all $x\in\mathcal{N}_{\tilde{C}\ep}(\Gamma(t))$, 
$t\in[\mu Ct^{\ep},T]$ and $\ep\in(0,\ep_0]$,
\begin{equation}\label{eq:non degenerateness 1}
\begin{split}
H_u(&u^{\ep}(x,t),v^{\ep}(x.t))\,n(p(x,t),t)\cdot\nabla u^{\ep}(x,t)\\
&+H_v(u^{\ep}(x,t),v^{\ep}(x.t))\,n(p(x,t),t)\cdot\nabla v^{\ep}(x,t)
\geq \frac{c_1}{\ep},
\end{split}
\end{equation}
\begin{equation}\label{eq:degenerateness}
\lim_{\ep\rightarrow0}
\sup_{t\in[\mu Ct^{\ep},T]}
\{\ep\|\nabla_{\Gamma(t)} u^{\ep}(\cdot,t)\|_{L^{\infty}(\mathcal{N}_{\tilde{C}\ep}(\Gamma(t)))}
+\ep\|\nabla_{\Gamma(t)} v^{\ep}(\cdot,t)\|_{L^{\infty}(\mathcal{N}_{\tilde{C}\ep}(\Gamma(t)))}\}
=0,
\end{equation}
where $C$, $\tilde{C}$ and $\ep_0$ are constants 
in Lemma \ref{lm:sandwich} and in Theorem \ref{th:loc interface}, respectively 
and $H(u,v)$ is a function in Remark \ref{rm:S and tw}, 
$n(p,t)$ is the outward unit normal vector to $\Gamma(t)$ at $p\in\Gamma(t)$, 
$p(x,t)$ is the image of $x$ of the projection onto $\Gamma(t)$ and 
\[
\mathcal{N}_{\tilde{C}\ep}(\Gamma(t))
:=\{x\mid {\rm dist}(x,\Gamma(t))<\tilde{C}\ep\}.
\] 
If \eqref{eq:non degenerateness 1} is not true, 
then there exist $\ep_j>0$, $t_j\in[\mu Ct^{\ep_j},T]$ and 
$x_j\in\mathcal{N}_{\tilde{C}\ep_j}(\Gamma(t_j))$ such that 
\begin{equation}\label{eq:contradiction 1}
\begin{split}
\lim_{j\rightarrow\infty}
&\ep_j\{H_u(u^{\ep_j}(x_j,t_j),v^{\ep_j}(x_j.t_j))\,n(p(x_j,t_j),t_j)\cdot\nabla u^{\ep_j}(x_j,t_j)\\
&+H_v(u^{\ep_j}(x_j,t_j),v^{\ep_j}(x_j.t_j))\,n(p(x_j,t_j),t_j)\cdot\nabla v^{\ep_j}(x_j,t_j)\}
=0.
\end{split}
\end{equation}
By the same rescaling argument as in the proof of the statement (ii), 
the rescaled function $(w_1^j(z,\tau),w_2^j(z,\tau))$ 
converges to $(U_0(z_N-\theta_0,x_*),V_0(z_N-\theta_0,x_*))$ in $C^{2,1}_{loc}(\R^N\times\R)$ 
as $j\rightarrow\infty$ and 
\[
\begin{split}
H_u(U_0(&-\theta_0,x_*),V_0(-\theta_0,x_*))\, U_0'(-\theta_0,x_*)\\
&+H_v(U_0(-\theta_0,x_*),V_0(-\theta_0,x_*))V'_0(-\theta_0,x_*)\\
=&\lim_{j\rightarrow\infty}
\{H_u(w_1^j(0,0),w_2^j(0,0))\,n(p(x_j,t_j),t_j)\cdot\nabla w_1^j(0,0)\\
&+H_v(w_1^j(0,0),w_2^j(0,0))\,n(p(x_j,t_j),t_j)\cdot\nabla w_2^j(0,0)\}
=0.
\end{split}
\]
This contradicts \eqref{eq:non degenerateness} in Remark \ref{rm:S and tw} and 
this contradiction implies 
that \eqref{eq:non degenerateness 1} holds. The proof of \eqref{eq:degenerateness} 
is similar to that of \eqref{eq:non degenerateness 1} and we omit it.

By \eqref{eq:non degenerateness}, Theorem \ref{th:loc interface}, 
\eqref{eq:non degenerateness 1} and the implicit function theorem, 
there is a smooth function $\eta^{\ep}(\cdot,t)$ 
defined on $\Gamma(t)$ for each $t\in[\mu Ct^{\ep},T]$ 
such that 
\begin{eqnarray}
\label{eq:implicit 1}
&&H(u^{\ep}(x+\eta^{\ep}(x,t)n(x,t),t),v^{\ep}(x+\eta^{\ep}(x,t)n(x,t),t))=0,\\
\label{eq:implicit 2}
&&H(u^{\ep}(y,t),v^{\ep}(y,t))=0
\Leftrightarrow
\exists x\in\Gamma(t),\ {\rm s.t.},\ y=x+\eta^{\ep}(x,t)n(x,t),\\
\label{eq:gradient}
&&\nabla_{\Gamma(t)}\eta^{\ep}(x,t)
=-\frac{H_u
\nabla_{\Gamma(t)}u^{\ep}(p^{\ep},t)
+H_v
\nabla_{\Gamma(t)}v^{\ep}(p^{\ep},t)}{
H_u
\,n^{\ep}\cdot
\nabla u^{\ep}(p^{\ep},t)
+H_v
\,n^{\ep}\cdot
\nabla v^{\ep}(p^{\ep},t)}\\
&&\ \ {\rm for\ all}\ \ x\in\Gamma(t),\ t\in[\mu Ct^{\ep},T],\nonumber
\end{eqnarray}
where $\nabla_{\Gamma(t)}$ denotes the gradient on $\Gamma(t)$, 
$n^{\ep}=n(p^{\ep},t)$, 
\[
\begin{split}
&p^{\ep}=p^{\ep}(x,t)=x+\eta^{\ep}(x,t)n(x,t),\\
&H_u=H_u(u^{\ep}(p^{\ep},t),v^{\ep}(p^{\ep},t)),\ 
H_v=H_v(u^{\ep}(p^{\ep},t),v^{\ep}(p^{\ep},t)).
\end{split}
\]
From \eqref{eq:implicit 1} and \eqref{eq:implicit 2}, it holds that 
$\Gamma^{\ep}(t)$ is expressed as the graph of the function $\eta^{\ep}(\cdot,t)$ 
on $\Gamma(t)$ for each $t\in[\mu Ct^{\ep},T]$. 
By \eqref{eq:non degenerateness 1}, \eqref{eq:degenerateness} and \eqref{eq:gradient}
\[
\begin{split}
|\nabla_{\Gamma(t)}\eta^{\ep}(x,t)|
&= O(\ep
|\nabla_{\Gamma(t)}u^{\ep}(p^{\ep},t)|+\ep|\nabla_{\Gamma(t)}v^{\ep}(p^{\ep},t)|)
\rightarrow0\\ 
&{\rm as}\ \ \ep\rightarrow0\ \ 
{\rm uniformly\ for}\ \ x\in\Gamma(t),\ t\in[\mu Ct^{\ep},T].
\end{split}
\]
This completes the proof of statement (i). 
Statement (iii) immediately follows from (ii) and 
\[
\sup_{x\in\Omega,\,t\in[\mu Ct^{\ep},T]}
|d^{\ep}(x,t)-d(x,t)|\leq \tilde{C}\epsilon\ \ {\rm for\ all}\ \ \ep\in(0,\ep_0].
\]
\end{proof}
\subsection{Proof of Lemmas \ref{lm:sandwich}\ and \ref{lm:Ltype}}

\begin{proof}[Proof of Lemma \ref{lm:sandwich}]
In Definition 6.1 in \cite{HKMN}, they construct a lower solution 
$(\hat{u}^-_{\ep},\hat{v}^-_{\ep})$ 
and a upper solution $(\hat{u}^+_{\ep},\hat{v}^+_{\ep})$ of \eqref{eq:crds}. 
By $(7.1)$ in \cite{HKMN} and their construction, it is easy to see that there are 
$C>0$, $A_i>0$ $(i=1,2,3)$ and $\ep_0>0$ such that, 
for any $\ep\in(0,\ep_0]$, 
$(x,t)\in\overline{\Omega}\times[Ct^{\ep},T]$, 
\[
\begin{split}
U_0\Big(\frac{d(x,t)+\ep A_1}{\ep},x\Big)
&-\ep A_2\\
\leq\hat{u}^-_{\ep}(x,t)
&\leq u^{\ep}(x,t)\leq\hat{u}^+_{\ep}(x,t)\\
&\leq U_0\Big(\frac{d(x,t)-\ep A_1}{\ep},x\Big)
+\ep A_2,\\
V_0\Big(\frac{d(x,t)-\ep A_1}{\ep},x\Big)
&-\ep A_3\\
\leq\hat{v}^+_{\ep}(x,t)
&\leq v^{\ep}(x,t)\leq \hat{v}^-_{\ep}(x,t)\\
&\leq V_0\Big(\frac{d(x,t)+\ep A_1}{\ep},x\Big)
+\ep A_3.
\end{split}
\]
This completes the proof.
\end{proof}
\begin{proof}[Proof of Lemma \ref{lm:Ltype}]
This lemma is an easy consequence of Theorem \ref{th:Ltype}. 
In fact, it is obvious that \eqref{cd:irreducible} holds. 
\eqref{cd:bistable} and $({\rm A})$ follow from \eqref{cd:sc} and {\rm Assumption $1$}, 
respectively.
\end{proof}
\section{Proof of the Liouville type theorems}\label{sc:proof of Ltype}

\subsection{Proof of Theorems \ref{th:Ltype}\ and \ref{th:Ltype 1}}

\begin{proof}[Proof of Theorem \ref{th:Ltype}]
Let us put 
\[
(u_1,u_2)=(u,-v),\ f_1(u_1,u_2)=f(u_1,-u_2),\ f_2(u_1,u_2)=-g(u_1,-u_2).
\] 
Then Theorem \ref{th:Ltype} is an easy consequence of Theorem \ref{th:Ltype 1} and 
we omit the detail of the proof. 
\end{proof}
Next proposition plays a key rule to prove the uniqueness of the traveling wave solution 
up to shifts in time.
\begin{proposition}[strong comparison principle]\label{pr:scp}
Assume \eqref{cd:bistable 1} and \eqref{cd:cs 1}. 
Let $u(x,t)$, $v(x,t)$ be solutions of \eqref{eq:crds 3} such that 
\[
p^-\preceq u,\,v\preceq p^+,\ \ 
u(\cdot,0)\preceq v(\cdot,0)
\]
Then 
$u(\cdot,t)\preceq v(\cdot,t)$ for any $t\geq0$. 
If, in addition, assume \eqref{cd:irreducible 1} and $u(\cdot,0)\prec v(\cdot,0)$, then 
$u(\cdot,t)\ll v(\cdot,t)$ for any $t>0$.
\end{proposition}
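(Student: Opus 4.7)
The plan is to reformulate the statement in terms of $w := v-u$, which satisfies a weakly coupled cooperative linear system, apply a parabolic weak comparison, and then promote it to strict positivity using \eqref{cd:irreducible 1} together with scalar strong maximum principles applied componentwise. A Taylor expansion gives
\[
w_{l,t} = D_l\Delta w_l + \sum_{j=1}^{m} c_{l,j}(x,t)\,w_j,\qquad c_{l,j}(x,t):=\int_0^1 f_{l,u_j}\bigl(u+s w\bigr)\,ds.
\]
Since $p^{-}\preceq u,v\preceq p^{+}$, the segment from $u(x,t)$ to $v(x,t)$ lies in $[p^-,p^+]$, so \eqref{cd:cs 1} forces $c_{l,j}\geq 0$ for $l\neq j$, and the diagonal entries $c_{l,l}$ are bounded.

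For the weak inequality $w\succeq 0$ I would use the standard penalization trick: set $\tilde w_l:=w_l+\varepsilon e^{Kt}$ with $K>\max_l\sum_j\|c_{l,j}\|_{\infty}$, so $\tilde w(\cdot,0)\gg 0$. If $\tilde w$ failed to remain componentwise positive on $\R^N\times[0,T]$, the boundedness of $w$ combined with a Gaussian cutoff (or ball-exhaustion / Phragm\'en--Lindel\"of argument, needed because the domain is unbounded) would produce a first touching point $(x_0,t_0)$ and index $l_0$ at which $\tilde w_{l_0}(x_0,t_0)=0$. At this point $\tilde w_{l_0,t}\leq 0$ and $\Delta\tilde w_{l_0}\geq 0$, yet
\[
\tilde w_{l_0,t}-D_{l_0}\Delta\tilde w_{l_0}=\sum_{j\neq l_0} c_{l_0,j}\tilde w_j+\varepsilon e^{Kt_0}\Big(K-\sum_j c_{l_0,j}\Big)>0,
\]
where the first sum is $\geq 0$ by cooperativity and the choice of $K$ makes the second term strictly positive. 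Sending $\varepsilon\downarrow 0$ yields $w\succeq 0$.

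For the strict statement, assume \eqref{cd:irreducible 1} and $w(\cdot,0)\not\equiv 0$. Each $w_l\geq 0$ satisfies the scalar parabolic inequality
\[
w_{l,t}-D_l\Delta w_l-c_{l,l}w_l=\sum_{j\neq l}c_{l,j}w_j\geq 0,
\]
with bounded coefficients, so the classical scalar strong maximum principle on $\R^N$ yields the dichotomy: for every $t>0$, either $w_l\equiv 0$ on $\R^N\times[0,t]$, or $w_l(\cdot,t)>0$ on all of $\R^N$. Assume for contradiction that $w_{l_0}(x_0,t_0)=0$ for some $t_0>0$, and put
\[
\Lambda:=\bigl\{\,l\in\{1,\dots,m\}\ :\ w_l\equiv 0 \text{ on } \R^N\times[0,t_0]\,\bigr\}\ni l_0.
\]
For each $l\in\Lambda$ the equation reduces to $\sum_{j\notin\Lambda} c_{l,j}(x,t)\,w_j(x,t)=0$ on $\R^N\times(0,t_0]$; since each summand is $\geq 0$, each vanishes. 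The dichotomy forces $w_j>0$ on a subcylinder $\R^N\times(T_0,t_0]$ for every $j\notin\Lambda$, hence $c_{l,j}\equiv 0$ there, and by non-negativity of the integrand together with continuity at $s=0$ this gives $f_{l,u_j}(u(x,t))=0$ for all $l\in\Lambda$, $j\notin\Lambda$, $(x,t)\in\R^N\times(T_0,t_0]$. A preliminary application of the same cooperative maximum principle to $u-p^-$ and $p^+-u$ (discarding the trivial case where $u$ is identically an equilibrium) shows $u(x,t)\in(p^-,p^+)$ at every interior point; at any such point \eqref{cd:irreducible 1} produces some $l\in\Lambda$, $j\notin\Lambda$ with $f_{l,u_j}(u(x,t))>0$, a contradiction. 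Hence $\Lambda=\{1,\dots,m\}$, forcing $w\equiv 0$ on $\R^N\times[0,t_0]$, which contradicts $w(\cdot,0)\not\equiv 0$.

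The main obstacle I expect is the careful handling of the unbounded spatial domain $\R^N$: both the first-touching argument in the weak comparison step and the scalar strong maximum principle in the strict step require a localization (cutoff or Phragm\'en--Lindel\"of estimate), legitimate here because $w$ is bounded by the values of $p^{\pm}$. A secondary technical point is the preliminary step showing $u,v\in(p^-,p^+)$ in the interior so that the pointwise irreducibility of $DF$ can be invoked in the final contradiction.
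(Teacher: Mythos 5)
Your proof is correct and follows essentially the same route as the paper: linearize $v-u$ into a cooperative linear system, establish the weak inequality via a maximum-principle argument, then deduce strict positivity by a componentwise scalar strong maximum principle and a contradiction with the irreducibility of $DF$ via the same index set $\Lambda=\{l: w_l\equiv0\}$. The only cosmetic differences are in the weak step (you use penalization $w_l+\varepsilon e^{Kt}$; the paper proves preservation of strict ordering and then approximates the initial data), and you flag explicitly the need to check $u\in(p^-,p^+)$ before invoking irreducibility and the unbounded-domain localization, both of which the paper leaves implicit.
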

\begin{proof}
First we prove 
\begin{equation}\label{eq:wcp}
u(\cdot,0)\ll v(\cdot,0)\Rightarrow
u(\cdot,t)\ll v(\cdot,t)\ \ {\rm for\ all}\ \ t\geq0.
\end{equation}
If \eqref{eq:wcp} does not hold, then 
\[
t_0:=\sup\{t'>0\mid u(\cdot,t)\ll v(\cdot,t)\ \ {\rm for\ all}\ \ t\in[0,t']\}
\in(0,\infty)
\]
and 
\[
\begin{split}
&u(\cdot,t)\ll v(\cdot,t)\ \ {\rm for}\ \ t\in[0,t_0),\\
&u(\cdot,t_0)\preceq v(\cdot,t_0),\ u(\cdot,t_0)\not\ll v(\cdot,t_0).
\end{split}
\]
Hence $\zeta(x,t):=v(x,t)-u(x,t)$ satisfies 
\[
\begin{split}
&\zeta(\cdot,t)\gg(0,0,\cdots,0)\ \ {\rm for}\ \ t\in[0,t_0),\ \ 
\zeta(\cdot,t_0)\succeq(0,0,\cdots,0),\\
&\exists l_0\in\{1,2,\cdots,m\},\,x_0\in\R^N,\ \zeta_{l_0}(x_0,t_0)=0.
\end{split}
\]
By \eqref{cd:cs 1}, for $(x,t)\in\R^N\times(0,t_0]$, 
\begin{equation}\label{eq:PDIneq}
\begin{split}
\zeta_{l_0,t}&-D_{l_0}\Delta \zeta_{l_0}=f_{l_0}(v)-f_{l_0}(u)\\
&\geq f_{l_0}(\cdots,u_{l_0-1},v_{l_0},u_{l_0+1},\cdots)-f_{l_0}(u)\\
&\geq-M\zeta_{l_0}\ \ \ (M:=\sup_{w\in[p^-,p^+]}|DF(w)|).
\end{split}
\end{equation}
By strong maximum principle for a parabolic equation, 
\[
\zeta_{l_0}(\cdot,t)=0\ \ {\rm for }\ \ t\in[0,t_0].
\] 
This contradicts $\zeta(\cdot,t)\gg(0,0,\cdots,0)$ for $t\in[0,t_0)$. 
Hence \eqref{eq:wcp} holds. We take 
some smooth functions $u^j(\cdot,0)$, $v^j(\cdot,0)$ such that 
\[
\begin{split}
&\lim_{j\rightarrow\infty}u^j(\cdot,0)=u(\cdot,0),\ 
\lim_{j\rightarrow\infty}v^j(\cdot,0)=v(\cdot,0),\\
&p^-\ll u^j(\cdot,0)\ll v^j(\cdot,0)\ll p^+\ \ {\rm for}\ \ j\in\N.
\end{split}
\]
and let $u^j$, $v^j$ be solutions of \eqref{eq:crds 3} with initial data $u^j(\cdot,0)$, $v^j(\cdot,0)$, 
respectively. Then by \eqref{eq:wcp}, 
\[
u^j(\cdot,t)\ll v^j(\cdot,t)\ \ {\rm for}\ \ t\geq0.
\]
Taking limits of both sides of this inequality as $j\rightarrow\infty$, 
by continuously dependence of solutions of 
a parabolic system on initial data, 
we get 
\[
u(\cdot,t)\preceq v(\cdot,t)\ \ {\rm for}\ \ t\geq0.
\]
Now we assume \eqref{cd:irreducible 1} and prove 
\[
u(\cdot,0)\prec v(\cdot,0)\Rightarrow u(\cdot,t)\ll v(\cdot,t)
\ \ {\rm for}\ \ t>0.
\]
If this is not true, then there are $l_0\in\{1,2,\cdots,m\}$ and 
$(x_0,t_0)\in\R^N\times(0,\infty)$ 
such that $u_{l_0}(x_0,t_0)=v_{l_0}(x_0,t_0)$. 
$\zeta_{l_0}:=u_{l_0}-v_{l_0}$ satisfies \eqref{eq:PDIneq},  
$\zeta_{l_0}\geq0$ and $\zeta_{l_0}(x_0,t_0)=0$. By strong maximum principle, 
$\zeta_{l_0}=0$ for $(x,t)\in\R^N\times[0,t_0]$. 
Put
\[
\Lambda:=\{l\in\{1,2,\cdots,m\}\mid \zeta_{l}:=u_{l}-v_{l}\not>0\ \ {\rm on}\ \ \R^N\times\{t_0\}\}
\]
Then by $u(\cdot,0)\prec v(\cdot,0)$, 
\[
\emptyset\not=
\Lambda\subsetneq\{1,2,\cdots,m\}
\] 
and 
\[
\left\{
\begin{split}
&\zeta_l(\cdot,t)=0\ \ {\rm in}\ \ \R^N\times[0,t_0]
\ \ {\rm for}\ \ l\in\Lambda,\\
&v_l(\cdot,t_0)-u_l(\cdot,t_0)=\zeta_l(\cdot,t_0)>0\ \ {\rm in}\ \ \R^N
\ \ {\rm for}\ \ l\not\in\Lambda.
\end{split}
\right.
\]
Then 
\begin{equation}\label{eq:reducible}
f_{l,u_j}(v(\cdot,t_0))=0\ \ {\rm for}\ \ l\in\Lambda,\ 
j\not\in\Lambda.
\end{equation}
If \eqref{eq:reducible} is not true, then  
\[
f_{l_0,u_{j_0}}(v(x_0,t_0))>0\ \ {\rm for\ some}\ \ x_0\in\R^N\ \ 
{\rm and}\ \ l_0\in\Lambda,\ j_0\not\in\Lambda
\] 
and hence, 
by $\zeta_{l_0}=0$ in $\R^N\times[0,t_0]$ and $v_{j_0}(x_0,t_0)>u_{j_0}(x_0,t_0)$, 
\[
\begin{split}
0=\zeta_{l_0,t}&-D_{l_0}\Delta \zeta_{l_0}=f_{l_0}(v)-f_{l_0}(u)\\
&> f_{l_0}(\cdots,v_{j_0-1},u_{j_0},v_{j_0+1},\cdots)-f_{l_0}(u)\\
&\geq f_{l_0}(\cdots,u_{l_0-1},v_{l_0},u_{l_0+1},\cdots)-f_{l_0}(u)\\
&\geq-M\zeta_{l_0}=0\ \ {\rm at}\ \ (x,t)=(x_0,t_0).
\end{split}
\]
This is a contradiction and \eqref{eq:reducible} holds. 
However \eqref{eq:reducible} implies that $DF(v(x,t_0))$ is reducible for 
each $x\in\R^N$ and contradicts the assumption \eqref{cd:irreducible 1}. 
The proof is completed. 
\end{proof}
Next lemma completes the proof of the last part of Theorem \ref{th:Ltype 1}. 
We give the proof later. 
\begin{lemma}\label{lm:uniqueness}
Assume \eqref{cd:bistable 1}, \eqref{cd:cs 1} and \eqref{cd:irreducible 1}. 
Let $\phi(n\cdot x-ct), \widetilde{\phi}(n\cdot x-ct)$ be functions satisfying $({\rm A'})$ 
with a direction $n\in\R^N$ and a speed $c$ and for some constants $a,b\in\R$,  
\begin{equation}\label{eq:sandwiched}
\widetilde{\phi}(\cdot-a)\preceq\phi(\cdot)\preceq\widetilde{\phi}(\cdot-b).
\end{equation}
Then $\phi(\cdot)=\widetilde{\phi}(\cdot-\theta_0)$ 
for some $\theta_0\in\R$. 
\end{lemma}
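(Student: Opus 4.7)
The plan is to run the classical sliding method, adapted to the cooperative system setting. Set
\[
\theta^{*} := \inf\{\theta \in \R : \widetilde{\phi}(\cdot - \theta) \succeq \phi(\cdot)\}.
\]
From \eqref{eq:sandwiched} the set contains $b$, while the strict monotonicity of $\widetilde{\phi}$ (each component is strictly decreasing since $\widetilde{\phi}' \ll (0,\ldots,0)$) combined with $\widetilde{\phi}(\cdot - a) \preceq \phi$ forces the set to lie in $[a,\infty)$; hence $\theta^{*} \in [a,b]$ and, by continuity, $\widetilde{\phi}(\cdot - \theta^{*}) \succeq \phi$. If equality holds we take $\theta_{0} = \theta^{*}$. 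Otherwise the goal is to contradict the minimality of $\theta^{*}$ by showing that $\widetilde{\phi}(\cdot - \theta^{*} + \delta) \succeq \phi(\cdot)$ for all sufficiently small $\delta > 0$.

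The first step is to upgrade $\widetilde{\phi}(\cdot - \theta^{*}) \succ \phi$ to a strict componentwise inequality on all of $\R$. I would consider the traveling-wave solutions $u(x,t) := \phi(n \cdot x - ct)$ and $v(x,t) := \widetilde{\phi}(n \cdot x - ct - \theta^{*})$ of \eqref{eq:crds 3}; both take values in $[p^{-}, p^{+}]$ and $u(\cdot,0) \prec v(\cdot,0)$. The strict part of Proposition \ref{pr:scp}, which uses \eqref{cd:irreducible 1}, gives $u(\cdot,t) \ll v(\cdot,t)$ for every $t > 0$, and since $v - u$ depends only on $n \cdot x - ct$, this yields $\phi(z) \ll \widetilde{\phi}(z - \theta^{*})$ for every $z \in \R$.

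Next I would split $\R$ into a compact window $[-R,R]$ and its two tails. On the window, the strict inequality together with continuity gives a uniform lower bound $\widetilde{\phi}(\cdot - \theta^{*}) - \phi \succeq \epsilon_{0}\mathbf{1}$, which absorbs a shift by small $\delta$ via the uniform continuity of $\widetilde{\phi}$. The main obstacle is the tail estimate, since $\widetilde{\phi}(z - \theta^{*}) - \phi(z) \to 0$ as $|z| \to \infty$ and no such uniform lower bound survives there. To handle this I would use that the linearization of the traveling-wave ODE
\[
D_{l} \phi_{l}'' + c\phi_{l}' + f_{l}(\phi) = 0
\]
at each stable equilibrium $p^{\pm}$ is cooperative and irreducible, so by a Krein--Rutman argument the slowest decay mode of any small nonnegative solution tending to $p^{\pm}$ is a strictly positive multiple of a principal eigenvector. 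Consequently $p^{+} - \phi(z)$ and $p^{+} - \widetilde{\phi}(z - \theta^{*})$ both decay as $z \to -\infty$ in a strictly positive principal direction, and the same holds near $p^{-}$. Using $\widetilde{\phi}' \ll 0$ to estimate $\widetilde{\phi}(z - \theta^{*} + \delta) - \widetilde{\phi}(z - \theta^{*})$, this positivity of the principal direction allows one to compare the tails quantitatively and conclude that $\widetilde{\phi}(z - \theta^{*} + \delta) - \phi(z) \succeq 0$ persists for $|z|$ large and $\delta > 0$ small. Combining the compact and tail estimates yields $\widetilde{\phi}(\cdot - \theta^{*} + \delta) \succeq \phi$, contradicting the definition of $\theta^{*}$ and finishing the proof.
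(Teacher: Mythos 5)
Your overall strategy matches the paper's: both proofs run the sliding method, invoke the strong comparison principle (Proposition \ref{pr:scp}) to upgrade the critical inequality to a strict one, and then split $\R$ into a compact window, where strictness plus uniform continuity absorbs a small shift, and two tails, which require a separate argument. The one-sided threshold $\theta^{*}$ you use is a cosmetic variant of the two-sided width $\tau_0$ in the paper's proof; the logic is the same.

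Where you part company with the paper -- and where a genuine gap appears -- is the tail estimate. Your argument applies the ``slowest decay mode is a positive multiple of the principal eigenvector'' principle to $p^{+}-\phi(z)$ and $p^{+}-\widetilde{\phi}(z-\theta^{*})$ \emph{separately}, deduces that each decays like $e^{\mu z}$ times a positive vector, and then asserts that the shifted difference $\widetilde{\phi}(z-\theta^{*}+\delta)-\phi(z)$ stays nonnegative for $|z|$ large. But what you actually need to control is the decay rate of the \emph{difference} $W(z):=\widetilde{\phi}(z-\theta^{*})-\phi(z)$, and this is not implied by the decay of the two tails individually. Concretely, writing $p^{+}-\phi(z)\sim Ae^{\mu z}\psi$ and $p^{+}-\widetilde{\phi}(z)\sim Be^{\mu z}\psi$ with $A,B>0$, the strict inequality $\phi\ll\widetilde{\phi}(\cdot-\theta^{*})$ only yields $A\geq Be^{-\mu\theta^{*}}$; if equality holds, $W$ decays \emph{faster} than $e^{\mu z}$, whereas $|\widetilde{\phi}'(z-\theta^{*})|\sim e^{\mu z}$, and the inequality $W(z)\succeq\delta|\widetilde{\phi}'(z-\theta^{*})|$ fails no matter how small $\delta$ is. To close this you would have to prove that the positive solution $W$ of the linearized cooperative system cannot decay strictly faster than the principal rate -- a Harnack-type statement at infinity that your sketch does not establish.

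The paper sidesteps this issue entirely. In the step it invokes (``as in the proof of Theorem~\ref{th:Ltype 1}''), the tails $S_{\pm}=\{\pm(x_1-ct)\leq -2C\}$ are handled by introducing $\ep_{\pm}:=\inf\{\ep>0\mid u+\ep\varphi^{\pm}\succeq w^{\sigma^{*}-\eta}$ on $S_{\pm}\}$ and showing $\ep_{\pm}=0$ by a half-space maximum-principle argument. The crucial inputs are the quantitative inequalities \eqref{eq:near p^+} and \eqref{eq:near p^-}, which say that $F(w)\preceq F(w-\ep\varphi^{+})$ near $p^{+}$ (and the analogue near $p^{-}$) for $\ep$ up to $\delta/2$; these allow the comparison to be run on the entire half-space with boundary data controlled by the compact-window strictness, requiring no information at all on the asymptotic amplitudes of the two waves. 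You should either adopt this $+\ep\varphi^{\pm}$ comparison or supply the missing Phragm\'en--Lindel\"of type lemma ruling out super-principal decay of $W$.
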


\begin{proof}
[Proof of Theorem \ref{th:Ltype 1}]
Take 
\[
0<\delta<
\min\Big\{\frac{p^+_l-p^-_l}{\max\{\varphi^-_l,\varphi^+_l\}}\mid l=1,2,\cdots,m\Big\}
\] 
such that 
\begin{equation}\label{cd:delta}
\delta<\frac{\min\{\lambda_{+}\varphi_l^{+},\,\lambda_{-}\varphi_l^{-}\mid
l=1,2,\cdots,m\}}{\max\Big\{1,\,
\underset{u\in[p^-,p^+]}{\sup}|D^2F(u)|\Big\}},
\end{equation}
where 
$[p^-,p^+]:=\{u\in\R^m\mid p^-_l\leq u_l\leq p^+_l,\ \ l=1,2,\cdots,m\}$ 
and
\[
|D^2F|:=\sqrt{\sum_{l,j,k=1}^m f^2_{l,u_j u_k}}.
\]
Then 
\begin{equation}\label{eq:near p^+}
\begin{split}
F(w)\preceq &F(w-\ep\varphi^+)
\ \ {\rm for\ any}\ \ w\in\R^m\ \ {\rm with}\\
&p^+\succeq w\succeq p^+ - \frac{\delta}{2}\varphi^+
\ \ {\rm and\ for\ any}\ \ \ep\in[0,\delta/2],
\end{split}
\end{equation}
\begin{equation}\label{eq:near p^-}
\begin{split}
F(w)\succeq &F(w+\ep\varphi^-)
\ \ {\rm for\ any}\ \ w\in\R^m\ \ {\rm with}\\
&p^-\preceq w\preceq p^- +\frac{\delta}{2}\varphi^-
\ \ {\rm and\ for\ any}\ \ \ep\in[0,\delta/2].
\end{split}
\end{equation}
In fact, for any $w\in\R^m$ with 
$
p^+\succeq w\succeq p^+ - \frac{\delta}{2}\varphi^+
$
and $\ep\in[0,\delta/2]$, by using \eqref{cd:bistable 1},
\[
\begin{split}
F(w)&=F(w-\ep\varphi^+)+F(w)-F(w-\ep\varphi^+)
- \ep DF(p^+)\varphi^+ - \ep\lambda_+\varphi^+\\
&=F(w-\ep\varphi^+)+\ep\int_0^1\{DF(w-s\epsilon\varphi^+)-DF(p^+)\}\varphi^+ ds
- \ep\lambda_+\varphi^+.
\end{split}
\]
By \eqref{cd:delta}, $\ep\in[0,\delta/2]$ 
and ${}^t (0,0,\cdots,0)\succeq w-p^+\succeq -(\delta/2)\varphi^+$, 
\[
\begin{split}
\Big|\int_0^1&\{DF(w-s\epsilon\varphi^+)
-DF(p^+)\}\varphi^+ ds\Big|
\leq(\ep+\delta/2)\sup_{u\in[p^-,p^+]}|D^2F(u)|\\
&
\leq\delta\sup_{u\in[p^-,p^+]}|D^2F(u)|
\leq \lambda_+\varphi^+_l\ \ \ \ {\rm for}\ \ l=1,2,\cdots,m.\\
\therefore\epsilon\int_0^1&\{DF(w-s\epsilon\varphi^+)
-DF(p^+)\}\varphi^+ ds
\preceq \ep\lambda_+\varphi^+.
\end{split}
\]
Thus \eqref{eq:near p^+} holds. 
The proof of \eqref{eq:near p^-} is similar and we omit it.

By \eqref{cd:cs 1},
\begin{equation}\label{eq:competition}
\begin{split}
&f_l(v)-f_l(u)\geq -M\,(v_l-u_l)
\  \ (l=1,2,\cdots,m)\\
&\ \ {\rm for\ any}\ \ u,\,v\in[p^-,p^+]\ \ 
{\rm with}\ \ u\preceq v\ \ \ \ (M:=\max_{w\in[p^-,p^+]}|DF(w)|).
\end{split}
\end{equation}
Taking appropriate Cartesian coordinates, we may assume that 
the solution $u(x,t)$ of \eqref{eq:crds 3} satisfies 
\begin{equation}\label{cd:sandwich}
\phi(x_1 - c t - a) \preceq u(x,t) \preceq \phi(x_1 - c t - b)
\end{equation}
for all $(x,t)\in\R^N\times\R$. 

Fix $(\rho,\tau)\in\R^{N-1}\times\R$ arbitrary. For $\sigma\in\R$, 
put 
\[
w^{\sigma}(x,t):=u(x_1+c\tau+\sigma,x'+\rho,t+\tau)
\] 
for all $(x,t)\in\R^N\times\R$. 
By \eqref{cd:sandwich} and monotonicity of $\phi(z)$, 
for any $\sigma\geq b-a$, 
\[
w^{\sigma}(x,t)\preceq \phi(x_1+\sigma-ct-b)
\preceq \phi(x_1-ct-a)\preceq u(x,t)
\]
for all $(x,t)\in\R^N\times\R.$
Define 
\[
\sigma^*:=\inf\{\sigma\mid w^{\sigma'}\preceq u
\ \  {\rm holds\ for\ all}\ \ \sigma'\geq\sigma\}.
\]
Then clearly $\sigma^*\leq b-a$. 

Now we prove $\sigma^*\leq0$ by contradiction. 
Suppose $\sigma^*>0$. By \eqref{cd:sandwich}, 
monotonicity of $\phi(z)$ and $\phi(\pm\infty)=p^{\mp}$, 
there is $C>b-a$ such that 
\begin{equation}\label{eq:asymptotic}
\left\{
\begin{split}
&p^-\preceq u(x_1,x',t)\preceq p^- + \frac{\delta}{2} \varphi^-
\\ &\hspace{40pt}{\rm for\ all}\ \ x_1-ct\geq C,\ (x',t)\in\R^{N-1}\times\R,\\
&p^+\succeq u(x_1,x',t)\succeq p^+ - \frac{\delta}{2} \varphi^+
\\ &\hspace{40pt}{\rm for\ all}\ \ x_1-ct\leq -C,\ (x',t)\in\R^{N-1}\times\R,
\end{split}
\right.
\end{equation}
where 
\[
0<\delta<
\min\Big\{\frac{p^+_l-p^-_l}{\max\{\varphi_l^+,\varphi^-_l\}}\mid l=1,2,\cdots,m\Big\}
\] 
is a constant for which \eqref{cd:delta} holds. 

If 
$
(0,0,\cdots,0)\not\ll\inf\{u-w^{\sigma^*}\mid |x_1-ct|\leq 2C,\ (x',t)\in\R^{N-1}\times\R\},
$
then there exist $l_0\in\{1,2,\cdots,m\}$, $x_{1,\infty}\in[-2C,2C]$ and 
\[
x_{1,n}-ct_n\in[-2C,2C],\ (x'_{n},t_n)\in\R^{N-1}\times\R
\ \ (n=1,2,3,\cdots)
\] 
such that 
\[
u_{l_0}(x_{1,n},x'_n,t_n)-w^{\sigma^*}_{l_0}(x_{1,n},x'_n,t_n)\rightarrow0,\ \ 
x_{1,n}-ct_n\rightarrow x_{1,\infty}\ \ {\rm as}\ \ n\rightarrow\infty.
\]
From standard parabolic estimates, up to extraction of subsequence, 
the function $u^n(x,t):=u(x_1+ct_n,x'+x_n',t+t_n)$ 
converges locally uniformly to a solution $U$ 
of \eqref{eq:crds 3} such that 
\[
\begin{split}
z(x,t)&:=U(x,t)-W^{\sigma^*}(x,t)\\
&:=U(x,t)-U(x_1+c\tau+\sigma^*,x'+\rho,t+\tau)\succeq(0,0,\cdots,0),\\
z_{l_0}(x_{1,\infty},&0,0)=0
\end{split}
\]
and by \eqref{eq:competition} and $W^{\sigma^*}\preceq U$, 
\[
z_{l_0,t}-D_{l_0}\Delta z_{l_0}=f_{l_0}(U)
-f_{l_0}(W^{\sigma^*})
\geq-M z_{l_0}
\]
for all $(x,t)\in \R^N\times\R$. 
By strong maximum principle, for all $(x,t)\in\R^N\times(-\infty,0]$, 
\[
\begin{split}
U_{l_0}(x,t)-U_{l_0}(x_1+c\tau+\sigma^*,x'+\rho,t+\tau)&=
U_{l_0}(x,t)-W^{\sigma^*}_{l_0}(x,t)\\
&=z_{l_0}(x,t)=0.
\end{split}
\]
If $\tau>0$, then, 
by 
\[
\begin{split}
&\sigma^*>0,\ \phi(-\infty)=p^+,\ \phi(+\infty)=p^-\ \ {\rm and} \\
&\phi(x_1-ct-a)\preceq U(x,t)\preceq \phi(x_1-ct-b),
\end{split}
\]
\[
\begin{split}
U_{l_0}(x,0)&=U_{l_0}(x_1-c\tau-\sigma^*,x'-\rho,-\tau)\\
&=\cdots=U_{l_0}(x_1-cn\tau-n\sigma^*,x'-n\rho,-n\tau)
\overset{n\rightarrow\infty}{\longrightarrow} p^+_{l_0}
\end{split}
\]
and this contradicts 
\[
U(x,0)\preceq \phi(x_1-b)\ll p^+.
\]
If $\tau\leq0$, then 
\[
\begin{split}
U_{l_0}(x,0)&=U_{l_0}(x_1+c\tau+\sigma^*,x'+\rho,\tau)\\
&=\cdots=U_{l_0}(x_1+cn\tau+n\sigma^*,x'+n\rho,n\tau)
\overset{n\rightarrow\infty}{\longrightarrow} p^-_{l_0}
\end{split}
\]
and this contradicts 
\[
p^-\ll\phi(x_1-a)\preceq U(x,0).
\]
Then it follows that 
\[
(0,0,\cdots,0)\ll\inf\{u-w^{\sigma^*}\mid (x,t)\in\R^N\times\R,\ 
|x_1-ct|\leq 2C\}.
\]
Hence, by uniformly continuity of $u$, 
there is an $\eta_0\in(0,\sigma^*)$ such that, for any $\eta\in[0,\eta_0]$, 
\begin{equation}\label{eq:inequality}
(0,0,\cdots,0)\ll
\inf\{u-w^{\sigma^*-\eta}\mid \ (x,t)\in\R^{N}\times\R,\ |x_1-ct|\leq 2C\}.
\end{equation}
By $u\succeq w^{\sigma^*}$, $\varphi^{\pm}\gg {}^t(0,0,\cdots,0)$ and uniformly continuity of $u$, 
there is an $\eta_1\in(0,\eta_0]$ such that, for any $\eta\in[0,\eta_1]$, 
\[
u+\frac{\delta}{2}\varphi^{\pm}
\succeq w^{\sigma^*}+\frac{\delta}{2}\varphi^{\pm}
\succeq w^{\sigma^*-\eta}.
\]
Put $S_{\pm}:=\{(x,t)\in\R^N\times\R\mid \pm (x_1-ct)\leq -2C\}$ and 
\[
\ep_{\pm}:=\inf\{\ep>0\mid u+\ep\varphi^{\pm}
\succeq w^{\sigma^*-\eta}\ {\rm on}\ S_{\pm}\}\ (\in[0,\delta/2]).
\]
We prove $\ep_{\pm}=0$ by contradiction. Suppose $\ep_{\pm}>0$. 
Then, by 
\[
u+\ep_{\pm}\varphi^{\pm}
-w^{\sigma^*-\eta}\succeq\epsilon_{\pm}\varphi^{\pm}\gg(0,0,\cdots,0)
\ \ {\rm on}\ \ \partial S_{\pm}
\] 
and 
\[
u(\mp\infty,x',t)+\ep_{\pm}\varphi^{\pm}
-w^{\sigma^*-\eta}(\mp\infty,x',t)=\ep_{\pm}\varphi^{\pm}\gg(0,0,\cdots,0),
\]
for each $\mu\in\{+,-\}$, 
there exist $l_{\mu}\in\{1,2\}$, $\mu x_{1,\infty}^{\mu}\in(-\infty,-2C)$ and 
\[
(x_n^{\mu},t_n^{\mu})=(x_{1,n}^{\mu},x_{n}^{\mu\prime},t_n^{\mu})\in S_{\mu}
\ \ (n=1,2,3,\cdots)
\] 
such that 
\[
u_{l_{\mu}}(x_n^{\mu},t_n^{\mu})+\ep_{\mu}\varphi^{\mu}
-w^{\sigma^*-\eta}_{l_{\mu}}(x_n^{\mu},t_n^{\mu})\rightarrow0,\ \ 
x^{\mu}_{1,n}-ct_n^{\mu}\rightarrow x^{\mu}_{1,\infty}\ \ {\rm as}\ \ 
n\rightarrow\infty.
\]
From standard parabolic estimates, up to extraction of subsequence, 
the functions $u^{\mu,n}(x,t):=u(x_1+ct_n^{\mu},x'+x_n^{\mu \prime},t+t_n^{\mu})$ 
converge locally uniformly to solutions $U^{\mu}$ of \eqref{eq:crds 3} such that, 
for $(x,t)\in S_{\mu}$, 
\[
\begin{split}
z^{\mu}(x,t)&:=U^{\mu}(x,t)+\ep_{\mu}\varphi^{\mu}
-W^{\mu,\sigma^*-\eta}(x,t)\\
&:=U^{\mu}(x,t)+\ep_{\mu}\varphi^{\mu}
-U^{\mu}(x_{1}+c\tau+\sigma^*-\eta,x'+\rho,t+\tau)\\
&\succeq(0,0,\cdots,0),\\
z^{\mu}_{l_{\mu}}(x_{1,\infty}^{\mu}&,0,0)=0,\ \ 
z^{\mu}\succeq\epsilon_{\mu}\varphi^{\mu}\gg(0,0,\cdots,0)\ \ {\rm on}\ \ \partial S_{\mu}
\end{split}
\]
and, by \eqref{eq:asymptotic}, 
\[
\begin{split}
&p^+\succeq W^{+,\sigma^*-\eta}\succeq p^+ -\frac{\delta}{2}\varphi^+
\ \ {\rm on}\ \ S_+,\\
&p^-\preceq U^{-}\preceq p^-+\frac{\delta}{2}\varphi^-
\ \ {\rm on}\ \ S_-
\end{split}
\]
and hence, by \eqref{cd:cs 1}, \eqref{eq:near p^+}, \eqref{eq:near p^-}, 
\eqref{eq:competition} and $\ep_{\pm}\in[0,\delta/2]$, 
\[
\begin{split}
z^+_{l_+,t}&-D_{l_+}\Delta z_{l_+}^+=f_{l_+}(U^{+})-f_{l_+}(W^{+,\sigma^*-\eta})\\
&\geq f_{l_+}(U^{+})-f_{l_+}(W^{+,\sigma^*-\eta} -\ep_{+}\varphi^+)
\geq -M z^+_{l_+}\ \ {\rm on}\ \ S_+,
\end{split}
\]
\[
\begin{split}
z^-_{l_-,t}&-D_{l_-}\Delta z^-_{l_-}=f_{l_-}(U^{-})-f_{l_-}(W^{-,\sigma^*-\eta})\\
&\geq f_{l_-}(U^{-} +\ep_{-}\varphi^-)-f_{l_-}(W^{-,\sigma^*-\eta})
\geq -M z^{-}_{l_-}\ \ {\rm on}\ \ S_-.
\end{split}
\]
By strong maximum principle,
\[
z^{\pm}_{l_{\pm}}(x,t)=0\ \ 
{\rm for\ all}\ \ (x,t)\in S_{\pm}\cap (\R^N\times(-\infty,0])
\]
and this contradicts 
$z^{\pm}\succeq\epsilon_{\pm}\varphi^{\pm}\gg(0,0,\cdots,0)$ on $\partial S_{\pm}$. 
Thus $\ep_{\pm}=0$ and hence 
\[
u\succeq w^{\sigma^*-\eta}\ \ {\rm on}\ \ S_{\pm}\ \ 
{\rm for\ any}\ \ \eta\in[0,\eta_1].
\]
Therefore, by \eqref{eq:inequality}, 
it holds that 
$u\succeq w^{\sigma^*-\eta}\ 
{\rm for\ any}\ \eta\in[0,\eta_1].$ 
This contradicts the minimality of $\sigma^*$. 
Thus $\sigma^*\leq0$ and hence 
\[
u(x,t)\succeq w^0(x,t)=u(x_1+c\tau,x'+\rho,t+\tau).
\]
Since $(\rho,T)\in\R^{N-1}\times\R$ is arbitrary, there is a function $\widetilde{\phi}$ 
such that 
\[
u(x,t)=\widetilde{\phi}(x_1-ct)\ \ {\rm with}\ \ 
\widetilde{\phi}(-\infty)=p^+,\ \widetilde{\phi}(+\infty)=p^-.
\] 
Moreover $\widetilde{\phi}'\preceq(0,0,\cdots,0)$ 
since $\widetilde{\phi}(x_1-ct)\succeq \widetilde{\phi}(x_1-ct+\sigma)$ for all $\sigma>0$.
By strong maximum principle and 
$\widetilde{\phi}(-\infty)=p^+\gg p^-=\widetilde{\phi}(+\infty)$, 
\[
\widetilde{\phi}'\ll(0,0,\cdots,0).
\] 
If, in addition, assume \eqref{cd:irreducible 1}, then,  
by Lemma \ref{lm:uniqueness}, 
\[
\widetilde{\phi}(\cdot)=\phi(\cdot-\theta_0).
\] 
Then 
$\theta_0\in(a,b)$ follows from $\phi(\cdot-a)\preceq \phi(\cdot-\theta_0)
\preceq \phi(\cdot-b)$ and monotonicity of $\phi$. 
\end{proof}

\begin{proof}[Proof of Lemma \ref{lm:uniqueness}]
Define 
\[
\tau_0:=\inf\{\tau'>0\mid
\exists\tau\in\R,\ 
\widetilde{\phi}(\cdot-\tau)\preceq\phi(\cdot)\preceq\widetilde{\phi}(\cdot-\tau-\tau')\}
\ (\in[0,b-a])
\]
and we prove $\tau_0=0$ by contradiction. 
Suppose $\tau_0>0$. Then there are $\tau'_j,\,\tau_j\in\R$ such that 
$\tau_j'\rightarrow\tau_0$ as $j\rightarrow\infty$,
\[ 
\widetilde{\phi}(\cdot-\tau_j)\preceq\phi(\cdot)\preceq\widetilde{\phi}(\cdot-\tau_j-\tau'_j)
\ \ {\rm for}\ \ j=1,2,\cdots.
\]
By \eqref{eq:sandwiched} and monotonicity of $\widetilde{\phi}$, 
\[
\tau_j\in[a,b].
\] 
Hence, by extracting a subsequence, we may assume that $\tau_j$ converges to 
a $\tau_*$. Then 
\[
\widetilde{\phi}(\cdot-\tau_*)\preceq\phi(\cdot)
\preceq\widetilde{\phi}(\cdot-\tau_*-\tau_0).
\]
Let us take $\delta>0$, $\ep>0$, 
$C>\max\{|a|,|b|\}$ as in the proof of Theorem \ref{th:Ltype 1}. 
By Proposition \ref{pr:scp}, 
\[
\widetilde{\phi}(n\cdot x-ct-\tau_*)\ll\phi(n\cdot x-ct)\  
((x,t)\in\R^N\times\R).
\] 
Hence 
\begin{equation}\label{eq:strictly positivity}
(0,0,\cdots,0)\ll\inf\{\phi(s)-\widetilde{\phi}(s-\tau_*)\mid
-2C\leq s\leq2C\}.
\end{equation}
As in the proof of Theorem \ref{th:Ltype 1}, 
for any sufficiently small $\eta>0$, 
\[
\widetilde{\phi}(\cdot-\tau_*-\eta)\preceq \phi(\cdot).
\]
This implies 
\[
\tau_0
:=\inf\{\tau'>0\mid
\exists\tau\in\R,\ 
\widetilde{\phi}(\cdot-\tau)\preceq\phi(\cdot)\preceq\widetilde{\phi}(\cdot-\tau-\tau')\}
\leq\tau_0-\eta<\tau_0.
\]
This is contradiction and $\tau_0$ is equal to $0$. 
Therefore there is a $\theta_0\in\R$ such that
$\widetilde{\phi}(\cdot-\theta_0)=\phi(\cdot).$
\end{proof} 

\subsection{Outline of the proof of Theorems \ref{th:Ltype 2}\ and \ref{th:Ltype 3}}
The proof of the following two propositions is 
same as that of Proposition \ref{pr:scp} 
and we omit the proof. 
\begin{proposition}[strong comparison principle]\label{pr:t-scp}
Assume \eqref{cd:ellipticity}, \eqref{cd:T-periodic}, \eqref{cd:t-bistable} and \eqref{cd:t-cs}. 
Let $u(x,t)$, $v(x,t)$ be solutions of \eqref{eq:t-crds} such that 
\[
p^-\preceq u,\,v\preceq p^+,\ \ 
u(\cdot,0)\preceq v(\cdot,0).
\]
Then 
$u(\cdot,t)\preceq v(\cdot,t)$ for any $t\geq0$. 
If, in addition, assume \eqref{cd:t-irreducible} and $u(\cdot,0)\prec v(\cdot,0)$, then 
$u(\cdot,t)\ll v(\cdot,t)$ for any $t>0$. 
\end{proposition}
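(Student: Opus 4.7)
The plan is to adapt the proof of Proposition~\ref{pr:scp} essentially verbatim, with only the $t$-dependence of the coefficients, drift, reaction, and equilibria requiring minor bookkeeping. All the tools used in the homogeneous case---the cooperation/irreducibility structure and the strong maximum principle---have immediate analogues in the $t$-periodic setting through \eqref{cd:ellipticity}, \eqref{cd:T-periodic}, \eqref{cd:t-cs}, and \eqref{cd:t-irreducible}.

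\textbf{Step 1 (strict version from strict initial data).} I would first show: if $u(\cdot,0) \ll v(\cdot,0)$, then $u(\cdot,t) \ll v(\cdot,t)$ for every $t \geq 0$. Arguing by contradiction, define
\[
t_0 := \sup\{t' > 0 \mid u(\cdot,s) \ll v(\cdot,s) \text{ for all } s \in [0,t']\},
\]
and suppose $t_0 < \infty$. Then $\zeta := v - u$ satisfies $\zeta(\cdot,t_0) \succeq 0$ with some component $\zeta_{l_0}$ vanishing at a point $(x_0, t_0)$. Using \eqref{cd:t-cs}, exactly as in \eqref{eq:PDIneq}, $\zeta_{l_0}$ satisfies the linear parabolic differential inequality
\[
\zeta_{l_0,t} - \sum_{i,j=1}^{N} D_{l_0}^{ij}(t)\,\zeta_{l_0,x_i x_j} - q_{l_0}(t)\cdot \nabla \zeta_{l_0} \geq -M\,\zeta_{l_0}
\]
on $\R^N \times (0, t_0]$, with $M := \sup\{|DF(t,w)| : t \in [0,T],\, w \in [p^-(t), p^+(t)]\}$ finite by $T$-periodicity and continuity. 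The uniform ellipticity \eqref{cd:ellipticity} together with H\"older continuity of the coefficients lets us invoke the strong maximum principle for parabolic operators with $t$-dependent coefficients, forcing $\zeta_{l_0} \equiv 0$ on $\R^N \times [0, t_0]$ and contradicting the definition of $t_0$.

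\textbf{Step 2 (general weak comparison and strict version from $\prec$).} For general ordered initial data I would approximate by smooth sequences with $p^-(0) \ll u^j(\cdot,0) \ll v^j(\cdot,0) \ll p^+(0)$ converging to $u(\cdot,0)$, $v(\cdot,0)$; applying Step~1 and passing to the limit via continuous dependence of parabolic solutions on initial data yields $u(\cdot,t) \preceq v(\cdot,t)$ for all $t \geq 0$. Finally, under \eqref{cd:t-irreducible}, suppose strictness fails: $\zeta_{l_0}(x_0,t_0) = 0$ for some $l_0$ and $t_0 > 0$. The strong maximum principle again yields $\zeta_{l_0} \equiv 0$ on $\R^N \times [0, t_0]$. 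Setting
\[
\Lambda := \{l \in \{1, \ldots, m\} \mid \zeta_l \text{ vanishes somewhere on } \R^N \times \{t_0\}\},
\]
the hypothesis $u(\cdot,0) \prec v(\cdot,0)$ forces $\emptyset \neq \Lambda \subsetneq \{1, \ldots, m\}$. Repeating the pointwise computation leading to \eqref{eq:reducible} at a point where $\zeta_j > 0$ for some $j \notin \Lambda$ gives $f_{l,u_j}(t_0, v(x, t_0)) = 0$ for all $l \in \Lambda$, $j \notin \Lambda$, rendering $DF(t_0, v(x, t_0))$ reducible and contradicting \eqref{cd:t-irreducible}.

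\textbf{Main obstacle.} There is essentially no new obstacle beyond $t$-dependence bookkeeping. The only care needed is confirming that the global bound $M$ is finite---guaranteed by $T$-periodicity of $F$ and of $p^{\pm}$ together with compactness of $[0,T]$---and that the strong maximum principle applies to parabolic operators with $t$-dependent, H\"older-continuous coefficients, which is classical. The argument remains local in $t$, so the time-dependence of $p^{\pm}(t)$, $D_l^{ij}(t)$, $q_l(t)$, and $f_l(t,\cdot)$ presents no conceptual difficulty.
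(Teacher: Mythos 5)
Your proposal is correct and follows exactly the route the paper itself takes: the paper states explicitly that the proof of Proposition~\ref{pr:t-scp} is the same as that of Proposition~\ref{pr:scp} and omits it, and you have reproduced that argument with the only necessary modifications (the drift term $q_l(t)\cdot\nabla$, the $t$-dependent equilibria $p^{\pm}(t)$, and taking the bound $M$ over one period by \eqref{cd:T-periodic}).
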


\begin{proposition}[strong comparison principle]\label{pr:x-scp}
Assume \eqref{cd:ellipticity 1}, \eqref{cd:L-periodic}, \eqref{cd:x-bistable} and \eqref{cd:x-cs}. 
Let $u(x,t)$, $v(x,t)$ be solutions of \eqref{eq:x-crds} such that 
\[
p^-\preceq u,\,v\preceq p^+,\ \ 
u(\cdot,0)\preceq v(\cdot,0).
\]
Then 
$u(\cdot,t)\preceq v(\cdot,t)$ for any $t\geq0$. 
If, in addition, assume \eqref{cd:x-irreducible} and $u(\cdot,0)\prec v(\cdot,0)$, then 
$u(\cdot,t)\ll v(\cdot,t)$ for any $t>0$.
\end{proposition}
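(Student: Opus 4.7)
The plan is to follow the proof of Proposition \ref{pr:scp} essentially line by line, replacing scalar $D_l$ by the operators $\sum_{i,j} D_l^{ij}(x) \partial_{x_i x_j} + q_l(x) \cdot \nabla$. The only structural properties used in that proof were (a) cooperation \eqref{cd:cs 1}, which here is supplied by \eqref{cd:x-cs}; (b) boundedness of $|DF|$ on $[p^-, p^+]$, which here holds on $\bigcup_{x} [p^-(x), p^+(x)]$ since $p^\pm$ are $\mathbb{L}$-periodic; and (c) the strong maximum principle for scalar linear parabolic equations, which is available for our operators thanks to uniform ellipticity \eqref{cd:ellipticity 1} together with boundedness of $q_l(x)$ (periodic and continuous).

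First I would prove the strict version: if $u(\cdot, 0) \ll v(\cdot, 0)$, then $u(\cdot, t) \ll v(\cdot, t)$ for all $t \geq 0$. Setting $t_0 := \sup\{t' > 0 \mid u(\cdot, t) \ll v(\cdot, t) \text{ on } [0, t']\}$ and arguing by contradiction, assume $t_0 < \infty$; then $\zeta_l := v_l - u_l$ satisfies $\zeta_l \gg 0$ on $[0, t_0)$, and there exist $l_0$ and $x_0$ with $\zeta_{l_0}(x_0, t_0) = 0$. Using cooperation as in \eqref{eq:PDIneq},
\[
\zeta_{l_0,t} - \sum_{i,j} D_{l_0}^{ij}(x)\, \zeta_{l_0, x_i x_j} - q_{l_0}(x) \cdot \nabla \zeta_{l_0} \geq -M \zeta_{l_0}
\]
on $\R^N \times [0, t_0]$, with $M$ a bound for $|DF|$ over the relevant range. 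The scalar strong maximum principle then forces $\zeta_{l_0} \equiv 0$ on $\R^N \times [0, t_0]$, contradicting $\zeta_{l_0} \gg 0$ on $[0, t_0)$. The non-strict comparison $u(\cdot, 0) \preceq v(\cdot, 0) \Rightarrow u \preceq v$ then follows by approximating with strictly ordered smooth data and using continuous dependence on initial conditions.

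For the second part, assume \eqref{cd:x-irreducible} and $u(\cdot, 0) \prec v(\cdot, 0)$. Suppose there exist $l_0$ and $(x_0, t_0)$ with $t_0 > 0$ and $\zeta_{l_0}(x_0, t_0) = 0$. The same parabolic inequality and the strong maximum principle give $\zeta_{l_0} \equiv 0$ on $\R^N \times [0, t_0]$. Defining $\Lambda := \{l \mid \zeta_l \not> 0 \text{ on } \R^N \times \{t_0\}\}$, one has $\emptyset \neq \Lambda \subsetneq \{1, \ldots, m\}$, and applying the same argument to each $l \in \Lambda$ shows $\zeta_l \equiv 0$ on $\R^N \times [0, t_0]$ for all $l \in \Lambda$. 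As in the derivation of \eqref{eq:reducible} in the proof of Proposition \ref{pr:scp}, cooperation then forces $f_{l, u_j}(x, v(x, t_0)) = 0$ for every $x \in \R^N$, $l \in \Lambda$, $j \notin \Lambda$, so $DF(x, v(x, t_0))$ is reducible for all such $x$. Picking any $x$ for which $v(x, t_0) \in (p^-(x), p^+(x))$ (which exists since $p^- \ll v \ll p^+$ on a non-empty set by strict comparison applied elsewhere) contradicts \eqref{cd:x-irreducible}.

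The main technical point is that the $x$-dependence of the coefficients does not interfere with the scalar strong maximum principle: we treat the equation for each $\zeta_{l_0}$ as a linear, uniformly parabolic scalar inequality in the standard form, absorbing the zeroth-order coupling terms into the right-hand side using cooperation. Once this is noted, the remainder of the argument is a verbatim transcription of the proof of Proposition \ref{pr:scp}, and for this reason the author legitimately omits it.
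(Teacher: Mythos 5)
Your adaptation is correct and matches what the paper intends: the author explicitly says the proof of Proposition \ref{pr:x-scp} is the same as that of Proposition \ref{pr:scp} and omits it, and your proposal reconstructs exactly that adaptation, correctly identifying the three ingredients that must be re-verified in the spatially periodic setting: the cooperation structure \eqref{cd:x-cs}, the uniform bound on $|DF(x,\cdot)|$ over $\bigcup_x[p^-(x),p^+(x)]$ (which is finite by $\mathbb{L}$-periodicity of $p^\pm$ and the coefficients), and the scalar strong maximum principle for $\partial_t - \sum D_{l_0}^{ij}(x)\partial_{x_ix_j} - q_{l_0}(x)\cdot\nabla$ on $\R^N$, which applies because the coefficients are bounded (periodic, H\"older continuous) and uniformly elliptic and the solutions are bounded. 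The structure (strict version via a $t_0$-supremum argument and SMP, then approximation to get weak comparison, then the $\Lambda$-reducibility argument) is a faithful transcription of the proof of Proposition \ref{pr:scp}.

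One small caveat about your final step: you attempt to justify the existence of some $x$ with $v(x,t_0)\in(p^-(x),p^+(x))$ by invoking ``strict comparison applied elsewhere,'' which is circular as written (you would be invoking the very statement being proved). This is, however, a subtlety already present verbatim in the paper's own proof of Proposition \ref{pr:scp} — there the reducibility of $DF(v(\cdot,t_0))$ is simply said to ``contradict \eqref{cd:irreducible 1}'' without checking that $v(x,t_0)$ lies in the open box where irreducibility is assumed. In the degenerate cases (e.g.\ $v\equiv p^+$), the argument as stated by both you and the paper would need the irreducibility hypothesis extended to the closed box, or an additional non-degeneracy hypothesis on $u,v$. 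Since the proposition is used only for solutions sandwiched strictly between the pulsating waves (where this is automatic), the gap is harmless for the paper's purposes, and your reconstruction remains faithful to the intended proof.
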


The following two lemmas play key rules to prove 
the last parts of Theorems \ref{th:Ltype 2} and \ref{th:Ltype 3}, respectively. 
\begin{lemma}\label{lm:t-uniqueness}
Assume \eqref{cd:ellipticity}, \eqref{cd:T-periodic}, 
\eqref{cd:t-bistable}, \eqref{cd:t-cs} and \eqref{cd:t-irreducible}. 
Let 
\[
\phi(z,t),\ \widetilde{\phi}(z,t)\ \ (z=n\cdot x-ct)
\] 
be functions satisfying $({\rm A1})$ with a direction $n\in\R^N$ and a speed $c$ 
and for some constants $a,b\in\R$ 
and for all $z\in\R,\ t\in\R$, 
\begin{equation}\label{eq:t-sandwiched}
\widetilde{\phi}(z-a,t)\preceq\phi(z,t)
\preceq\widetilde{\phi}(z-b,t).
\end{equation}
Then $\phi(z,t)\equiv\widetilde{\phi}(z-\theta_0,t)$ 
for some $\theta_0\in\R$. 
\end{lemma}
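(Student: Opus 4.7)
The plan is to follow the proof of Lemma \ref{lm:uniqueness} almost verbatim, substituting Proposition \ref{pr:t-scp} for Proposition \ref{pr:scp} and the $t$-periodic sliding argument (the $t$-periodic analogue of the one used in the proof of Theorem \ref{th:Ltype 1}) for its homogeneous counterpart. First I would set
\[
\tau_0 := \inf\{\tau' > 0 \mid \exists\,\tau\in\R\ \text{with}\ \widetilde{\phi}(z-\tau,t)\preceq\phi(z,t)\preceq\widetilde{\phi}(z-\tau-\tau',t)\ \text{for all}\ (z,t)\in\R\times\R\},
\]
observe that $\tau_0\le b-a$ thanks to \eqref{eq:t-sandwiched}, and attempt to show $\tau_0=0$ by contradiction; the symmetric argument then forces equality of $\phi$ and the shifted $\widetilde{\phi}$.

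Suppose $\tau_0>0$. Choosing $\tau_j'\searrow\tau_0$ with matching $\tau_j$, the monotonicity $\phi_z(\cdot,t)\ll 0$ from $({\rm A1})$ combined with \eqref{eq:t-sandwiched} pins $\tau_j\in[a,b]$, so up to a subsequence $\tau_j\to\tau_*$ and
\[
\widetilde{\phi}(z-\tau_*,t)\preceq\phi(z,t)\preceq\widetilde{\phi}(z-\tau_*-\tau_0,t).
\]
Proposition \ref{pr:t-scp}, together with the irreducibility \eqref{cd:t-irreducible}, upgrades the left inequality to $\widetilde{\phi}(z-\tau_*,t)\ll\phi(z,t)$ everywhere. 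Using $T$-periodicity of $\phi$ and $\widetilde{\phi}$ in $t$, and the asymptotics $\phi(\pm\infty,t)=\widetilde{\phi}(\pm\infty,t)=p^{\mp}(t)$ uniformly in $t\in[0,T]$, one selects $C>0$ such that both waves lie within $(\delta/2)\varphi^{\pm}(t)$ of $p^{\pm}(t)$ for $\pm z\le -C$, and the strict inequality on the compact set $\{|z|\le 2C\}\times[0,T]$ then upgrades by $T$-periodicity to a uniform gap
\[
\inf\bigl\{\phi(z,t)-\widetilde{\phi}(z-\tau_*,t)\bigm|\,|z|\le 2C,\ t\in\R\bigr\}\gg(0,\dots,0).
\]

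Next I would run the sliding argument exactly as in the final portion of the proof of Theorem \ref{th:Ltype 1}, perturbing $\tau_*$ to $\tau_*+\eta$ with $\eta>0$ small. On the central strip $\{|z|\le 2C\}$ the uniform gap together with the uniform continuity of $\widetilde{\phi}$ preserves the inequality; on the two tails $\pm z\le -2C$ one squeezes with the multiples $\ep\,\varphi^{\pm}(t)$ of the $T$-periodic stable eigenfunctions and applies the parabolic strong maximum principle to whichever component first touches zero. The estimates needed are the $t$-periodic analogues of \eqref{eq:near p^+}--\eqref{eq:near p^-}; they follow from \eqref{cd:t-bistable} by a Taylor expansion identical to the constant-coefficient case, provided one chooses the perturbation parameter $\delta$ so small that it beats $\sup_{t}|D^2 F(t,\cdot)|_{[p^-(t),p^+(t)]}$ uniformly. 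This yields $\widetilde{\phi}(z-\tau_*-\eta,t)\preceq\phi(z,t)$, contradicting the minimality of $\tau_0$. Hence $\tau_0=0$, and a final compactness extraction over the bounded $\tau_j\in[a,b]$ produces the desired shift $\theta_0\in[a,b]$.

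The main obstacle I anticipate is the careful bookkeeping of the tail estimates: one must verify that the linearization data $p^{\pm}(t)$ and $\varphi^{\pm}(t)$ from \eqref{cd:t-bistable}, although now genuinely $t$-dependent, behave uniformly well enough to play the role of the constant vectors $p^{\pm}$ and $\varphi^{\pm}$ in the homogeneous squeezing step. This is manageable because $T$-periodicity and smoothness give uniform positive lower bounds on each component of $\varphi^{\pm}(t)$ and uniform bounds on $|D^2 F(t,u)|$ on $[p^-(t),p^+(t)]$, so a single $\delta$ suffices for all $t$. Once this uniformity is established, no genuinely new idea is needed beyond what already appears in the proofs of Theorem \ref{th:Ltype 1} and Lemma \ref{lm:uniqueness}.
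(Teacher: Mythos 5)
Your proposal is correct and matches the paper's intended approach: the paper explicitly states that the proof of Lemma~\ref{lm:t-uniqueness} is based on Proposition~\ref{pr:t-scp} and an argument parallel to that of Lemma~\ref{lm:uniqueness}, which is exactly what you carry out, including the observation that $T$-periodicity reduces the uniform gap estimate to a compactness argument on $\{|z|\le 2C\}\times[0,T]$ and that $\varphi^{\pm}(t)$ and $p^{\pm}(t)$ admit uniform bounds so a single $\delta$ works.
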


\begin{lemma}\label{lm:x-uniqueness}
Assume \eqref{cd:x-bistable}, \eqref{cd:x-cs} and \eqref{cd:x-irreducible}. 
Let 
\[
u(x,t),\ v(x,t)\ \ ((x,t)\in\R^N\times\R)
\] 
be functions satisfying $({\rm A2})$ with a direction $n\in\R^N$ and a speed $c\not=0$ 
and for some constants $a,b\in\R$ and for all $x\in\R^N,\ t\in\R$, 
\begin{equation}\label{eq:x-sandwiched}
u(x,t+a)\preceq v(x,t)\preceq u(x,t+b).
\end{equation} 
Then 
$v(x,t)\equiv u(x,t-\theta_0)$ 
for some 
$\theta_0\in\R$. 
\end{lemma}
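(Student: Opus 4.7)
The plan is to adapt the proof of Lemma~\ref{lm:uniqueness}, replacing shifts in the wave variable $z$ with shifts in the time variable $t$. The key observation is that under $({\rm A2})$ the pulsating wave $u(x,t)$ is strictly monotone in $t$ since $cu_t \gg (0,\dots,0)$, so time shifts play the role that shifts of the wave profile played in the $z$-periodic setting of Lemma~\ref{lm:uniqueness}.

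First I set
\begin{equation*}
\tau_0 := \inf\bigl\{\tau' > 0 \bigm| \exists\, \tau \in \R,\ u(x,t+\tau) \preceq v(x,t) \preceq u(x,t+\tau+\tau')\ \forall (x,t) \in \R^N \times \R\bigr\}.
\end{equation*}
The sandwich hypothesis \eqref{eq:x-sandwiched} gives $\tau_0 \leq b-a < \infty$, and the goal is to show $\tau_0 = 0$. Assuming the contrary, I take minimizing sequences $\tau_j' \searrow \tau_0$ with companions $\tau_j$; the sandwich combined with strict $t$-monotonicity of $u$ forces $\tau_j \in [a,b]$, so up to a subsequence $\tau_j \to \tau_* \in [a,b]$, and in the limit
\begin{equation*}
u(x,t+\tau_*) \preceq v(x,t) \preceq u(x,t+\tau_*+\tau_0).
\end{equation*}
Since $cu_t \gg 0$ and $\tau_0 > 0$, neither inequality is an identity; combining Proposition~\ref{pr:x-scp} with the pulsating translation identity $u(x-k,t) = u(x,t+k\cdot n/c)$ (which lets strict order propagate both forward and backward in $t$ because $c\neq 0$ and $\mathbb{L}$ spans $\R^N$), both inequalities upgrade to $\ll$ throughout $\R^N \times \R$.

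Next I partition $\R^N \times \R$ into three regions by choosing a large constant $C > 0$. Using $\lim_{k \in \mathbb{L},\,k\cdot n \to \pm\infty} u(x+k,t) = p^{\mp}(x)$ together with the pulsating identity, $u(x,s) \to p^{\pm}(x)$ as $cs \to \pm\infty$ uniformly in $x$ modulo $\mathbb{L}$. For $C$ large enough, on the slab $c(t+\tau_*) \geq C$ each of $u(\cdot,t+\tau_*)$, $v(\cdot,t)$, $u(\cdot,t+\tau_*+\tau_0)$ lies in a $\tfrac{\delta}{2}\varphi^+(x)$-neighbourhood of $p^+(x)$, and symmetrically near $p^-(x)$ when $c(t+\tau_*)\leq -C$. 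On the middle slab $\{|c(t+\tau_*)|\leq C\}$, $\mathbb{L}$-periodicity renders the quotient compact and the strict inequality $v \ll u(\cdot,\cdot+\tau_*+\tau_0)$ yields a uniform positive gap; hence for all sufficiently small $\eta>0$, $v(x,t) \preceq u(x,t+\tau_*+\tau_0-\eta)$ there. On each asymptotic slab I run the $\varphi^{\pm}$-sliding argument of Theorem~\ref{th:Ltype 1}: set
\begin{equation*}
\ep_{\pm} := \inf\bigl\{\ep > 0 \bigm| u(x,t+\tau_*+\tau_0-\eta) + \ep\,\varphi^{\pm}(x) \succeq v(x,t)\text{ on the respective slab}\bigr\},
\end{equation*}
suppose $\ep_{\pm} > 0$ for contradiction, extract a touching sequence by compactness, and apply the strong maximum principle to the component at which touching occurs, using the eigenvalue identity in \eqref{cd:x-bistable} and the $x$-periodic analogues of \eqref{eq:near p^+}--\eqref{eq:near p^-}. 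This forces $\ep_{\pm} = 0$, so $v \preceq u(\cdot,\cdot+\tau_*+\tau_0-\eta)$ on all of $\R^N \times \R$, contradicting the minimality of $\tau_0$. Once $\tau_0 = 0$ is established, taking $\tau_j' \searrow 0$ with $\tau_j \to \tau_*$ produces $v(x,t)=u(x,t+\tau_*)$, and $\theta_0 := -\tau_*$ gives the conclusion.

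The principal obstacle will be the $\varphi^{\pm}$-sliding step on the asymptotic slabs: unlike the homogeneous case of Theorem~\ref{th:Ltype 1}, here $p^{\pm}(x)$ and $\varphi^{\pm}(x)$ genuinely depend on $x$ and solve the elliptic eigenvalue problem \eqref{cd:x-bistable} involving the drift $q_l(x)\cdot\nabla$, so verifying that $u+\ep\varphi^{\pm}$ yields the correct linearized differential inequality requires a Taylor expansion of $F(x,\cdot)$ around $p^{\pm}(x)$ that is uniform in $x$ over a fundamental domain of $\mathbb{L}$. A secondary subtle point, used throughout, is that $c\neq 0$ is essential for converting the lattice-asymptotic condition $k\cdot n \to \pm\infty$ into the temporal-asymptotic condition $t \to \pm\infty$, which is what makes the middle slab genuinely compact modulo $\mathbb{L}$.
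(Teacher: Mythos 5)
Your proposal is correct and follows essentially the same route as the paper's proof: define $\tau_0$ as the infimal gap width, assume $\tau_0>0$, upgrade a non-strict ordering to $\ll$ via Proposition~\ref{pr:x-scp}, use $\mathbb{L}$-periodicity to extract a touching limit on the compact middle slab, and run the $\varphi^{\pm}$-sliding argument of Theorem~\ref{th:Ltype 3} on the asymptotic slabs to shrink the gap, contradicting minimality. The only cosmetic difference is that you decrease the upper shift (showing $v\preceq u(\cdot,\cdot+\tau_*+\tau_0-\eta)$) while the paper increases the lower one (showing $u(\cdot,\cdot+\tau_*+\eta)\preceq v$); the two are mirror images.
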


\begin{proof}[Outline of the proof of Theorem \ref{th:Ltype 2}]
Take 
\[
0<\delta<
\min\Big\{\frac{p^+_l(t)-p^-_l(t)}{\max\{\varphi^+_l(t),\varphi^-_l(t)\}}
\mid t\in\R,\ l=1,2,\cdots,m\Big\}
\] 
such that 
\begin{equation}\label{eq:t-delta}
\delta<\min\Big\{\frac{\lambda_+\varphi^+_l(t)}{a_+(t)},
\frac{\lambda_-\varphi^-_l(t)}{a_-(t)}\mid
t\in\R,\ l=1,2,\cdots,m\Big\},
\end{equation}
where $a_{\pm}(t):=
\max\Big\{1,\underset{w\in[p^-(t),p^+(t)]}{\sup}(|D^2F(t,w)||\varphi^{\pm}(t)|^2)\Big\}$, 
\[
|D^2F|:=\sqrt{\sum_{l=1}^m\sum_{i,j=1}^N f_{l,u_i u_j}^2},\ \ 
|\varphi|^2:=\sum_{l=1}^m\varphi_l^2.
\]
Then, by the same calculation as in the proof of Theorem \ref{th:Ltype 1}, 
\begin{equation}\label{eq:t-near p^+}
\left\{
\begin{split}
&F(t,w)-\ep DF(t,p^+(t))\varphi^+(t)-\ep\lambda_+\varphi^+(t)
\preceq F(t,w-\ep\varphi^+(t))\\
&{\rm for\ any}\ \ w\in\R^m,\ t\in\R\ \ {\rm with}\ \ 
p^+(t)\succeq w\succeq p^+(t)-\ep\varphi^+(t)\ \ {\rm and}\\
&{\rm for\ any}\ \ \ep\in[0,\delta/2],
\end{split}
\right.
\end{equation}
\begin{equation}\label{eq:t-near p^-}
\left\{
\begin{split}
&F(t,w)+\ep DF(t,p^-(t))\varphi^-(t)+\ep\lambda_-\varphi^-(t)
\succeq F(t,w+\ep\varphi^-(t))\\
&{\rm for\ any}\ \ w\in\R^m,\ t\in\R\ \ {\rm with}\ \ 
p^-(t)\preceq w\preceq p^-(t)+\ep\varphi^-(t)\ \ {\rm and}\\
&{\rm for\ any}\ \ \ep\in[0,\delta/2],
\end{split}
\right.
\end{equation}
We also take $C>b-a$ such that 
\begin{equation}
\left\{
\begin{split}
p^-(t)&\preceq u(x,t)\preceq p^-(t)+\frac{\delta}{2}\varphi^-(t)\\
&\ \ {\rm for\ all}\ \ (x,t)\in\R^N\times\R\ \ {\rm with}\ \ n\cdot x-ct\geq C,\\
p^+(t)&\succeq u(x,t)\succeq p^+(t)-\frac{\delta}{2}\varphi^+(t)\\
&\ \ {\rm for\ all}\ \ (x,t)\in\R^N\times\R\ \ {\rm with}\ \ n\cdot x-ct\leq -C.
\end{split}
\right.
\end{equation}
For any $(\rho,\tau)\in\R^N\times T\Z$ with $n\cdot\rho-c\tau=0$, 
an argument similar to that in the proof of Theorem \ref{th:Ltype 1} shows that, for any 
$\sigma\geq0$, 
\[
w^{\sigma}(x,t):=u(x+\rho+\sigma n,t+\tau)\preceq u(x,t)\ \ 
{\rm for\ all}\ \ x\in\R^N,\ t\in\R.
\]
This implies that $u(x,t)=\widetilde{\phi}(z,t)$ $(z=n\cdot x-ct)$ for 
a function $\widetilde{\phi}$ which satisfies 
\[
\widetilde{\phi}(z,t+T)\equiv\widetilde{\phi}(z,t),\ 
\widetilde{\phi}_z\gg(0,0,\cdots,0),\ 
\widetilde{\phi}(\pm\infty,\cdot)=p^{\mp}(\cdot).
\]
Moreover \eqref{cd:t-irreducible} and Lemma \ref{lm:t-uniqueness} imply 
$\widetilde{\phi}(z,t)\equiv\phi(z-\theta_0,t)$. 
\end{proof}

\begin{proof}[Outline of the proof of Theorem \ref{th:Ltype 3}]
We assume that the speed $c$ is positive 
since the sign of the speed is irrelevant in the later argument. 
Take 
\[
0<\delta<
\min\Big\{\frac{p^+_l(x)-p^-_l(x)}{\max\{\varphi^+_l(x),\varphi^-_l(x)\}}
\mid x\in\R^N,\ l=1,2,\cdots,m\Big\}
\] 
such that 
\begin{equation}\label{eq:x-delta}
\delta<\min\Big\{\frac{\lambda_+\varphi^+_l(x)}{a_+(x)},
\frac{\lambda_-\varphi^-_l(x)}{a_-(x)}\mid
x\in\R^N,\ l=1,2,\cdots,m\Big\},
\end{equation}
where $a_{\pm}(x):=
\max\Big\{1,\underset{w\in[p^-(x),p^+(x)]}{\sup}(|D^2F(x,w)||\varphi^{\pm}(x)|^2)\Big\}$, 
\[
|D^2F|:=\sqrt{\sum_{l=1}^m\sum_{i,j=1}^N f_{l,u_i u_j}^2},\ \ 
|\varphi|^2:=\sum_{l=1}^m\varphi^2.
\]
Then, by the same calculation as in the proof of Theorem \ref{th:Ltype 1}, 
\begin{equation}\label{eq:x-near p^+}
\left\{
\begin{split}
&F(x,w)-\ep DF(x,p^+(x))\varphi^+(x)-\ep\lambda_+\varphi^+(x)
\preceq F(x,w-\ep\varphi^+(x))\\
&{\rm for\ any}\ \ w\in\R^m,\ x\in\R^N\ \ {\rm with}\ \ 
p^+(x)\succeq w\succeq p^+(x)-\ep\varphi^+(x)\ \ {\rm and}\\
&{\rm for\ any}\ \ \ep\in[0,\delta/2],
\end{split}
\right.
\end{equation}
\begin{equation}\label{eq:x-near p^-}
\left\{
\begin{split}
&F(x,w)+\ep DF(x,p^-(x))\varphi^-(x)+\ep\lambda_-\varphi^-(x)
\succeq F(x,w+\ep\varphi^-(x))\\
&{\rm for\ any}\ \ w\in\R^m,\ x\in\R^N\ \ {\rm with}\ \ 
p^-(x)\preceq w\preceq p^-(x)+\ep\varphi^-(x)\ \ {\rm and}\\
&{\rm for\ any}\ \ \ep\in[0,\delta/2],
\end{split}
\right.
\end{equation}
We also take $C>b-a$ such that 
\begin{equation}
\left\{
\begin{split}
p^-(x)&\preceq u(x,t)\preceq p^-(x)+\frac{\delta}{2}\varphi^-(x)\\
&\ \ {\rm for\ all}\ \ (x,t)\in\R^N\times\R\ \ {\rm with}\ \ n\cdot x-ct\geq C,\\
p^+(x)&\succeq u(x,t)\succeq p^+(x)-\frac{\delta}{2}\varphi^+(x)\\
&\ \ {\rm for\ all}\ \ (x,t)\in\R^N\times\R\ \ {\rm with}\ \ n\cdot x-ct\leq -C.
\end{split}
\right.
\end{equation}
For any $(\rho,\tau)\in\mathbb{L}\times \R$ with $n\cdot\rho-c\tau=0$, 
an argument similar to that in the proof of Theorem \ref{th:Ltype 1} shows that, 
for any $\sigma\geq0$,  
\[
w^{\sigma}(x,t):=u(x+\rho,t+\tau-\sigma)\preceq u(x,t)\ \ 
{\rm for\ all}\ \ x\in\R^N,\ y\in\R.
\]
This implies that, for $\rho\in\mathbb{L}$, $t\in\R$, 
\[
u(x+\rho,t+n\cdot\rho/c)\equiv u(x,t),\ 
u_t\succeq(0,0,\cdots,0).
\] 
By \eqref{eq:sandwich 1} and 
$\underset{k\in\mathbb{L},n\cdot k\rightarrow\pm\infty}{\lim}
v(\cdot+k,t)=p^{\mp}(\cdot)$, 
\begin{equation}\label{eq:asym beha}
\underset{k\in\mathbb{L},n\cdot k\rightarrow\pm\infty}{\lim}
u(\cdot+k,t)=p^{\mp}(\cdot).
\end{equation}
From \eqref{eq:asym beha}, maximum principle, $u_t\succeq(0,0,\cdots,0)$ and 
\[
\begin{split}
u_{l,tt}\geq&\sum_{i,j=1}^N D_l^{ij}(x)u_{l,t x_ix_j}+q_l(x)\cdot\nabla u_{l,t}+f_{l,u_l}(x,u_1,\cdots,u_m)u_{l,t}\\
&{\rm for }\ \ x\in\R^N,\ t\in\R\ \ (l=1,2,\cdots,m),
\end{split}
\]
it holds that 
$u_t\gg(0,0,\cdots,0).$ 
Therefore $u$ is a solution which satisfies $({\rm A2})$. 
Moreover, if, in addition, assume \eqref{cd:x-irreducible}, then Lemma \ref{lm:x-uniqueness} implies 
$u(x,t)\equiv v(x,t+\theta_0)$. 
\end{proof}

\begin{proof}
[Proof of Lemmas \ref{lm:t-uniqueness} and \ref{lm:x-uniqueness}]
The proof of Lemmas \ref{lm:t-uniqueness} and \ref{lm:x-uniqueness} 
is based on Propositions \ref{pr:t-scp}, \ref{pr:x-scp} 
and an argument similar to 
that in the proof of Lemma \ref{lm:uniqueness}. 
We give the proof of Lemma \ref{lm:x-uniqueness} only. 
The proof of Lemma \ref{lm:t-uniqueness} is easier and omitted. 
We only consider the case that the peed $c$ is positive since the sign of the speed is 
irrelevant in the later argument. 

Define 
\[
\begin{split}
\tau_0:=\{\tau'\mid \exists \tau\in\R,\ 
&u(x,t+\tau)\preceq v(x,t)\preceq u(x,t+\tau+\tau')
\\ &((x,t)\in\R^N\times\R)\}\ 
(\in[0,b-a])
\end{split}
\]
and we prove $\tau_0=0$ by contradiction. Suppose $\tau_0>0$. Then there are 
$\tau_j'$, $\tau_j\in\R$ such that $\tau_j'\rightarrow\tau_0$ as $j\rightarrow\infty$, 
\[
u(x,t+\tau_j)\preceq v(x,t)\preceq u(x,t+\tau_j+\tau_j')
\ \ {\rm for}\ \ (x,t)\in\R^N\times\R\ \ (j=1,2,\cdots).
\]
By \eqref{eq:x-sandwiched} and monotonicity of $u$, $v$ with respect to $t$, 
\[
\tau_j\in[a,b].
\] 
Hence, by extracting a subsequence, 
we may assume that $\tau_j$ converges to a $\tau_*$ as $j\rightarrow\infty$. 
Then 
\[
u(x,t+\tau_*)\preceq v(x,t)\preceq u(x,t+\tau_*+\tau_0)\ \ 
((x,t)\in\R^N\times\R).
\]
Let us take $\delta>0$, $\ep>0$, $C>\max\{|a|,|b|\}$ 
as in the proof of Theorem \ref{th:Ltype 3}. By Proposition \ref{pr:x-scp}, 
\[
u(x,t+\tau_*)\ll v(x,t)\ \ ((x,t)\in\R^N\times\R)
\] 
and hence 
\begin{equation}\label{eq:x-strictly positivity}
(0,0,\cdots,0)\ll\inf\{v(x,t)-u(x,t+\tau_*)\mid (x,t)\in\R^N\times\R,\ |n\cdot x-ct|\leq2C\}.
\end{equation}
If \eqref{eq:x-strictly positivity} is not true, then there are
$l_0\in\{1,2,\cdots,m\}$, $x_j\in\R^N$, $t_j\in\R$ 
such that 
\[
|n\cdot x_j-ct_j|\leq2C\ \ (j\in\N)\ \ {\rm and}\ \  
\lim_{j\rightarrow\infty}\{v_{l_0}(x_j,t_j)-u_{l_0}(x_j,t_j+\tau_*)\}=0.
\]
Let $k_j\in\mathbb{L}\,(=L_1\Z\times L_2\Z\times\cdots\times L_N\Z)$ satisfy 
\[
x_j\in k_j+[0,L_1)\times[0,L_2)\times\cdots\times[0,L_N)\ \ (j\in\N).
\]
Then  $x_j-k_j$, $t_j-n\cdot k_j/c$ are bounded uniformly for $j\in\N$. 
Hence, by extracting a subsequence, we may assume that there are 
$x_*\in\R^N$ and $t_*\in\R$ such that $|x_*\cdot n-ct_*|\leq 2C$, 
$
x_j-k_j\rightarrow x_*,\ 
t_j-n\cdot k_j/c\rightarrow t_*\ \ {\rm as}\ \ j\rightarrow\infty.
$
Thus 
\[
\begin{split}
0&=\lim_{j\rightarrow\infty}\{v_{l_0}(x_j,t_j)-u_{l_0}(x_j,t_j+\tau_*)\}\\
&=\lim_{j\rightarrow\infty}\{v_{l_0}(x_j-k_j,t_j-n\cdot k_j/c)-u_{l_0}(x_j-k_j,t_j-n\cdot k_j/c+\tau_*)\}\\
&=v_{l_0}(x_*,t_*)-u_{l_0}(x_*,t_*+\tau_*).
\end{split}
\]
This contradicts $u(x,t+\tau_*)\ll v(x,t)$\ \ $((x,t)\in\R^N\times\R)$ and 
\eqref{eq:x-strictly positivity} holds. By an argument similar to that in the proof 
of Theorem \ref{th:Ltype 1}, for any sufficiently small $\eta>0$, 
$
u(x_*,t_*+\tau_*+\eta)\preceq v(x_*,t_*).
$
Thus 
\[
\tau_0=\inf\{\tau'\mid \exists \tau\in\R,\ 
u(\cdot,\cdot+\tau)\preceq v(\cdot,\cdot)\preceq u(\cdot,\cdot+\tau+\tau')\}
\leq \tau_0-\eta<\tau_0.
\]
This is contradiction and $\tau_0=0$ is proved. 
This completes the proof. 
\end{proof}
\section*{\acknowledgments}
The author would like to thank Prof. Matano for many helpful suggestions and 
continuous encouragement. 
\bibliographystyle{amsplain}

\end{document}